\title{Gaussian Prepivoting for Finite Population Causal Inference}
\author{
    Peter L. Cohen \\
  Operations Research Center\\
  Massachusetts Institute of Technology\\
  1 Amherst Street\\
  Cambridge, Massachusetts 02142, U.S.A\\
  \texttt{plcohen@mit.edu} \\
  \And
    Colin B. Fogarty \\
  Operations Research and Statistics Group\\
  Massachusetts Institute of Technology\\
  100 Main Street\\
  Cambridge, Massachusetts 02142,  U.S.A\\
  \texttt{cfogarty@mit.edu}
}
\begin{document}

\maketitle

\begin{abstract}
	In finite population causal inference exact randomization tests can be constructed for sharp null hypotheses, i.e. hypotheses which fully impute the missing potential outcomes. Oftentimes inference is instead desired for the weak null that the sample average of the treatment effects takes on a particular value while leaving the subject-specific treatment effects unspecified. Without proper care, tests valid for sharp null hypotheses may be anti-conservative should only the weak null hold, creating the risk of misinterpretation when randomization tests are deployed in practice. We develop a general framework for unifying modes of inference for sharp and weak nulls, wherein a single procedure simultaneously delivers exact inference for sharp nulls and asymptotically valid inference for weak nulls. To do this, we employ randomization tests based upon prepivoted test statistics, wherein a test statistic is first transformed by a suitably constructed cumulative distribution function and its randomization distribution assuming the sharp null is then enumerated.  For a large class of commonly employed test statistics, we show that prepivoting may be accomplished by employing the push-forward of a sample-based Gaussian measure based upon a suitably constructed covariance estimator. In essence, the approach enumerates the randomization distribution (assuming the sharp null) of a P-value for a large-sample test known to be valid under the weak null, and uses the resulting randomization distribution to perform inference. The versatility of the method is demonstrated through a host of examples, including rerandomized designs and regression-adjusted estimators in completely randomized designs.\\ \textbf{Keywords: Pivotal quantity, Stochastic dominance, Randomization tests, Sharp null, Weak null, Rerandomization}

\end{abstract}

\section{Introduction}




In finite population causal inference two distinct hypotheses of ``no treatment effect'' are commonly tested: Fisher's sharp null and Neyman's weak null.  Fisher's sharp null of no effect refers to the null that the responses under treatment and under control are the same for \textit{all} individuals in the study \citep{ObsStudies}.  The sharp null imputes the missing values of the potential outcomes for all individuals, in so doing facilitating the use of randomization tests to provide exact inference with randomization alone acting as the basis for inference \citep{fis35}.  Neyman's weak null instead specifies only that the average of the treatment effects for the individuals in the experiment equals zero, while allowing for heterogeneity in the unit-specific effects. The missing potential outcomes are no longer imputed under the weak null, such that the randomization distribution under the weak null remains unknown. Consequently, inference for the weak null has historically proceeded using asymptotically conservative analytical approximations to the limiting distribution of the treated-minus-control difference in means.

While the exactness attained under the sharp null is appealing, randomization tests have been criticized for the seemingly restricted nature of the conclusions to which the researcher is entitled should the null be rejected \citep{cau17}. While the researcher may suggest that the treatment effect is not zero for all individuals, generally one is not entitled to a statement of whether the treatment effect is positive or negative on average for the individuals in the study. To address this, a recent literature has emerged on how randomization tests may be modified to maintain asymptotic validity under the weak null hypothesis.  The resulting methods provide a \textit{single} testing procedure that is asymptotically valid for the weak null hypothesis, while maintaining exactness should the sharp null also be true \citep{din17, loh17, din17ls, RandTestsWeakNulls, fog19student}.

The existing literature attains this unified mode of inference largely on a case-by-case basis: for a given experimental design, a specially catered test statistic is constructed so that the corresponding randomization test under the sharp null maintains asymptotic validity under the weak null. In this work, we provide both general conditions under which the unification may be achieved and a general methodology for attaining it. The central idea is to leverage prepivoting, an idea introduced in \citet{PrePivIntroduction, ber88}. For most commonly employed experimental designs and test statistics, the reference distribution generated by the prepivoted statistic under the assumption of the sharp null asymptotically stochastically dominates the true, but unknowable, randomization distribution under the weak null, yielding asymptotically conservative inference for the weak null while maintaining exactness under the sharp null hypothesis. As we demonstrate, prepivoting succeeds in many scenarios where other common resolutions such as studentization prove inadequate.

At a high level, prepivoting takes a test statistic $T_0$ and composes it with a cumulative distribution function $\hat{F}$ constructed from the observed data, forming the new test statistic $T_1 = \hat{F}(T_0)$. If $\hat{F}$ were a consistent estimate of $T_0$'s limit distribution, $\hat{F}(T_0)$ would, through an asymptotic application of the probability integral transform, tend to a standard uniform. Under the weak null hypothesis, the true distribution function for common test statistics $T_0$ cannot generally be consistently estimated. Fortunately, as developed in \S \ref{sec: parametric prepiv} a distribution function for a random variable that asymptotically stochastically dominates $T_0$ may be constructed. For most common test statistics for the weak null hypothesis, under conditions outlined in \S \ref{sec: parametric prepiv} this dominating distribution function amounts to a suitable pushforward of a multivariate Gaussian measure constructed using a conservative covariance estimator. Using this estimated distribution function, $T_1 = \hat{F}(T_0)$ is instead stochastically dominated by a standard uniform in the limit. Observe that through this construction, the prepivoted test $T_1 = \hat{F}(T_0)$ is precisely one minus the large sample $p$-value for the test statistic $T_0$ leveraging the central limit theorem. Rather than using this $p$-value to reach a conclusion by comparing its value to the desired $\alpha$, we instead use the reference distribution of this large-sample $p$-value enumerated over all possible randomizations assuming the sharp null holds. This reference distribution generally converges pointwise to the standard uniform distribution function for commonly used covariance estimators. As a result, inference is guaranteed to be asymptotically conservative under the weak null while maintaining exactness under the sharp null. The general takeaway is that rather than looking at the randomization distribution of a test statistic itself under the sharp null, one should instead enumerate the randomization distribution of one minus an asymptotically valid $p$-value to restore validity of Fisher randomization tests when only the weak null holds.

In \S \ref{sec: notation} we introduce notation for finite population causal inference and detail some standard assumptions.  Section \ref{sec: rand and perm inference} defines the reference distribution assuming the truth of Fisher's sharp null and juxtaposes it with its true though unknowable randomization distribution under Neyman's weak null of no effect on average.  After an overview of useful asymptotic results on completely randomized designs in \S\ref{sec: useful}, \S\ref{sec: parametric prepiv} introduces Gaussian prepivoting in the context of suitably constructed functions of treated-minus control difference in means. Section \ref{sec: illustration} provides examples of and insight into prepivoting using Gaussian measure. Section \ref{sec: linear} extends these results to other asymptotically linear estimators including regression-adjusted estimators, while \S \ref{sec: simulations} provides simulation studies highlighting the benefits of Gaussian prepivoting.

\section{Notation and Review}\label{sec: notation}
\subsection{Notation for finite population causal inference}
While the developments in this work apply quite generally across common experimental designs and with two or more levels of the treatment, in this work we focus on completely randomized experiments and rerandomized experiments with two treatments; see the appendix for extensions to paired designs and to completely randomized designs with multi-valued treatments. Consider a collection of $N$ individuals, where $n_{1}$ receive treatment and $n_{0} = N - n_{1}$ receive the control.  For the $i$th individual, the random variable $Z_{i}$ is the treatment indicator, taking the value $1$ if the $i$th individual receives treatment and $0$ otherwise.  We assume that the stable unit treatment value assumption holds, such that there is no interference and that there are no hidden levels of the treatment \citep{rub80}. The $i$th individual has two deterministic potential outcomes: $\mathbf{y}_{i}(1)$, the $d$-dimensional outcome under treatment, and $\mathbf{y}_{i}(0)$ the $d$-dimensional outcome under control.  Furthermore, the $i$th unit has deterministic covariates $\mathbf{x}_{i} \in \R^{k}$.  The $j$th coordinate of $\bfy_{i}(z)$ is $y_{ij}(z)$, and the analogous statement holds for $x_{ij}$.  The random vector $\bfZ$ represents $(Z_{1}, \ldots, Z_{N})^{\T}$; likewise $\Ytreat = (\mathbf{y}_{1}(1), \ldots, \mathbf{y}_{N}(1))^{\T}$ and $\Ycontrol = (\mathbf{y}_{1}(0), \ldots, \mathbf{y}_{N}(0))^{\T}$.  Under the finite population model the potential outcomes are viewed as fixed across randomizations, and the only randomness enters through $\bfZ$, the treatment allocation.  For a discussion of the finite population inference framework, we suggest \citet{FiniteSuperpop}.  The observed outcome-vector for individual $i$ is $\mathbf{y}_{i}(Z_{i})$ and the collection of these is denoted $\yobs$. 
\textcolor{black}{Causal inference with multiple outcomes is becoming increasingly common in modern applications ranging from drug repurposing studies to A/B tests assessing the impact of competing web page designs on various user engagement metrics. See \citet{psychExample} and \citet{cardiologyExample} for concrete examples of causal inference with multiple endpoints in biomedical sciences, and see \citet{decompTreatmentEffectVar} for a reference on the underlying mathematics of multivariate potential outcome models.} 

The vector of treatment effects for the $i$th individual is $\bm{\tau}_{i} = \mathbf{y}_{i}(1) - \mathbf{y}_{i}(0)$.  The average treatment effect for the individuals in the experiment is $\bar{\bm{\tau}} = N^{-1}\sum_{i = 1}^{N}\bm{\tau}_{i}$.  As the two potential outcomes are not jointly observable, $\bm{\tau}_i$ is unknown for all individuals. Neyman's weak null of no treatment effect on average is $H_{N} : \bar{\bm{\tau}} = \bm{0}$,  while Fisher's sharp null further stipulates $H_{F} : \bm{\tau}_{i} = \bm{0}\;\;(i = 1, \ldots, N)$  \textcolor{black}{such that the treatment made no difference among any of the $d$ outcomes measured.  We implicitly define the alternative hypothesis as that which complements the null, so for $H_{N}$ the alternative is $H_{A}:\overline{\bm{\tau}} \neq \mathbf{0}$ and for $H_{F}$ the alternative is $H_{A}: \, \exists\, i \text{ s.t. } \bm{\tau}_{i} \neq \mathbf{0}$.  Consequently, our tests are non-directional; this differs from the one-sided bounded alternatives tested by \cite{cau17}.  Furthermore, the one-sided bounded alternatives of \cite{cau17} bound each individual's treatment effect, whereas we are interested in unifying inference for both individual effects and aggregate effects.}

For any matrix $\mathbf{r} \in \mathbb{R}^{N\times d}$ and any binary vector $\mathbf{W}$ with $\sum_{i=1}^NW_i = n_1$, we define the function
\begin{align*}
    \hat{\tau}(\mathbf{r}, \mathbf{W}) &= \frac{1}{n_1}\sum_{i=1}^NW_i\mathbf{r}_i - \frac{1}{n_0}\sum_{i=1}^N(1-W_i)\mathbf{r}_i.
\end{align*}
Using this notation, the observed treated-minus-control difference in means for the outcome variables is $\hat{\tau}(\yobs, \mathbf{Z})$ and is often denoted by $\hat{\bm{\tau}}$ as shorthand. In general, ``hats'' are used to denote functions of observed quantities whose limiting properties will eventually be studied herein. Define $\bar{\mathbf{y}}(0) = N^{-1}\sum_{i = 1}^{N}\mathbf{y}_{i}(0)$ and $\bar{\mathbf{y}}(1) = N^{-1}\sum_{i = 1}^{N}\mathbf{y}_{i}(1)$ to be the average potential outcomes for the $N$ individuals in the study population.  Likewise, we define the covariance matrices
\begin{align*}
	\Sigma_{y(z)} & = (N - 1)^{-1}\sum_{i = 1}^{N}(\mathbf{y}_{i}(z) - \bar{\mathbf{y}}(z))(\mathbf{y}_{i}(z) - \bar{\mathbf{y}}(z))^{\T},\; z \in \{0, 1\};\\
	\Sigma_{\tau} & = (N - 1)^{-1}\sum_{i = 1}^{N}(\bm{\tau}_{i} - \bar{\bm{\tau}})(\bm{\tau}_{i} - \bar{\bm{\tau}})^{\T}.
\end{align*}

To emphasize the distinction between functions of observed outcomes and functions of covariates, we define the function $\hat{\mathbf{\delta}}(\mathbf{x}, \mathbf{W})$ with binary $\mathbf{W}$ such that $\sum_{i=1}^NW_i = n_1$ as
\begin{align*}
    \hat{\delta}(\mathbf{x}, \mathbf{W}) &= \frac{1}{n_1}\sum_{i=1}^NW_i\mathbf{x}_i - \frac{1}{n_0}\sum_{i=1}^N(1-W_i)\mathbf{x}_i.
\end{align*}
\textcolor{black}{The function $\hat{\mathbf{\delta}}(\mathbf{x}, \mathbf{W})$ is a special case of $\hat{\tau}(\mathbf{r}, \mathbf{W})$.  } The observed difference in means for covariates is $\hat{{\delta}}(\mathbf{x}, \mathbf{Z})$, abbreviated as $\hat{\bm{\delta}}$. The finite population mean of the covariates is $\bar{\mathbf{x}} = N^{-1}\sum_{i = 1}^{N}\mathbf{x}_{i}$. The finite population covariance matrix for the covariates is $\Sigma_{x}$, defined by simply replacing $\mathbf{y}_i(z)$ with $\mathbf{x}_i$ and $\bar{\mathbf{y}}(z)$ with $\bar{\mathbf{x}}$ in the definition of $\Sigma_{y(z)}$. The finite population covariance between potential outcomes and covariates is $\Sigma_{y(z)x}$ for $z = 0, 1$, and the covariance between treatment effects and covariates is $\Sigma_{\tau x}$.
Asymptotic arguments that follow will imagine a single sequence of finite populations of increasing size, with $N\rightarrow\infty$. As a result, quantities such as $\Sigma_{\tau}$ themselves vary as $N\rightarrow\infty$ and should be denoted by $\Sigma_{\tau, N}$ to reflect this. Generally the dependence is suppressed to reduce notational clutter; however, we do employ the notation $\Sigma_{\tau, \infty}$ to denote the limiting value of $\Sigma_{\tau,N}$ as $N\rightarrow \infty$, and likewise for other finite population quantities. For more on the finite population model for causal inference, see \citet{imbensRubin15} and \citet{FiniteSuperpop} among many.

\subsection{Rerandomized designs and balance criterion}
The set of all possible treatment assignments $\bfZ$ is denoted by $\Omega$, and is determined by the experimental design. In completely randomized experiments, covariates are not used to inform the chosen treatment assignment and $\Omega_{CRE} = \{\bfz: \sum_{i=1}^Nz_i = n_1\}$. To mitigate the risk of significant covariate imbalance, \citet{reRandMorganRubin} suggest instead building covariate balance into the treatment allocation process through rerandomization.  The study is conducted by collecting covariate data for the study participants, determining a measure of imbalance and a threshold for deciding what imbalances are acceptable, and selecting a treatment allocation uniformly over the set of allocations satisfying the balance criterion \citep{asymptoticsOfRerand}. Stringent balance criterion reduce the cardinality of $\Omega$ by eliminating undesirable assignments, with the hopes of improving precision as a consequence.  Naturally, randomization inference must take into account that the allowable realizations of $\bfZ$ depend upon the condition that covariate balance is met.

A \textit{balance criterion} is a Boolean-valued function $\phi(\cdot)$, where  $\phi(\sqrt{N}\hat{\bm{\delta}}) = 1$ is taken to mean that the treatment allocation $\bfZ$ which results in the particular realization of $\hat{\bm{\delta}}$ under consideration satisfies appropriate covariate balance.  We impose the following restriction on $\phi$:

\begin{condition} \label{cond: phi}$\phi: \mathbb{R}^k \mapsto \{0,1\}$ is an indicator function such that the set $M = \{\mathbf{b}: \phi(\mathbf{b}) = 1\}$ is closed, convex, mirror-symmetric about the origin (i.e. $\mathbf{b} \in M \Leftrightarrow -\mathbf{b} \in M$) with non-empty interior.\end{condition}


\subsection{Regularity conditions}
We make the following assumptions about the structure of the finite populations and experimental designs as $N$ goes to infinity. These assumptions are for the most part standard in the literature; see, for instance, \citet{RandTestsWeakNulls}.
\begin{assumption}\label{asm: non-degen sampling limit}
	The proportion $n_{1} / N$ limits to $p \in (0, 1)$ as $N \rightarrow \infty$.  
\end{assumption}

\begin{assumption}\label{asm: means and covs stabilize}
	All finite population means and covariances having limiting values for both the potential outcomes and the covariates. For instance, $\lim_{N \rightarrow \infty}\bar{\mathbf{y}}(z) = \bar{\mathbf{y}}_{\infty}(z)$ for $z \in \{0, 1\}$ and $\lim_{N \rightarrow \infty}\Sigma_{y(1)} = \Sigma_{y(1),\infty}$.
\end{assumption}


\begin{assumption}\label{asm: bounded fourth moment}
 There exists some $C < \infty$ for which, for all $z\in\{0,1\}$, all $j=1,..,d$ and all $N$,
	\begin{equation*}
		\frac{\sum_{i = 1}^{N}\left(y_{ij}(z) - \overline{y}_j(z)\right)^{4}}{N} < C,
	\end{equation*}
	where $\overline{y}_j(z)$ denotes the $j$th coordinate of $\overline{\bfy}(z)$. Further, the above holds for the covariates with ${x}_{ij}$ replacing ${y}_{ij}(z)$ above for $j=1,..,k$.
\end{assumption}

Assumption~\ref{asm: bounded fourth moment} is used to obtain finite population-inference strong laws of large numbers for mean and variance estimators. Such an assumption is made at times for mathematical convenience to simplify the analysis of certain random distributions and may hold under weaker assumptions.  Assumption~\ref{asm: bounded fourth moment} is commonplace in the literature on finite population causal inference; see, for instance, \citet{RandTestsWeakNulls, agnosticRegAdj, freedmanRegAdj2, freedmanRegAdj1}.

\subsection{A technical note on the convergence of random measures}

A random sequence of probability measures $\hat{\mu}_{N}$ on $S$ converges weakly in probability to a deterministic probability measure $\mu$ if $\int_{S} f\,d\hat{\mu}_{n} \convP[] \int_{S} f\,d\mu$ for all continuous bounded functions $f: S \rightarrow \R$ \citep[Section 2]{lowDim_HighDim}.  Aspects of the Portmanteau Theorem \citep[Theorem 1.3.4]{vanDerVaar_Wellner} extend to weak convergence in probability of random measures \citep{lowDim_HighDim, randomMeasures}.  Most importantly for our purposes is that if $\{\hat{F}_{N}\}$ are random cumulative distribution functions and $F$ is a fixed cumulative distribution function, then their associated measures converge weakly in probability if and only if $\hat{F}_{N}(t) \convP[] F(t)$ for all $t$ which are continuity points of $F$; we take this as the definition of weak convergence in probability for random cumulative distribution functions.

\color{black}

\section{Randomization distributions and tests}\label{sec: rand and perm inference}
\subsection{Randomization distributions}
Consider a scalar test statistic $T(\mathbf{y}(\mathbf{Z}), \mathbf{Z})$, a function of the observed responses and the treatment assignment received. The \textit{randomization distribution} for the test statistic $T$ is
\begin{equation}\label{eqn: general rand dist}
	\Rand_{T}(t) = \frac{1}{|\Omega|}\sum_{\mathbf{w} \in \Omega}\indicatorFunction{T(\mathbf{y}(\mathbf{w}), \mathbf{w}) \leq t}.
\end{equation}
$\Rand_{T}$ is the true cumulative distribution function of $T(\yobs, \mathbf{Z})$ with respect to the randomness in the treatment allocation $\bfZ$, distributed uniformly over $\Omega$. If we had access to $\Rand_T$ under the null hypothesis in question, we could make direct use of it to provide inference that is exact in finite samples, proceeding without dependence on asymptotics. Under Fisher's sharp null hypothesis, $\Rand_T$ \textit{is} specified by the observed outcomes as $\mathbf{y}(\mathbf{Z}) = \mathbf{y}(\mathbf{w})$ for any $\mathbf{w}\in \Omega$. Unfortunately, the distribution is generally unknown under the weak null, as the weak null merely constrains the missing potential outcomes without determining them.



\subsection{Randomizaton tests assuming the sharp null}\label{sec: perm dists}
In practice an experimenter draws a single realization of $\bfZ$, in so doing only revealing the values of the potential outcomes corresponding to the observed assignment. Suppose that regardless of whether or not Fisher's sharp null hypothesis actually holds, the researcher considers use of the randomization distribution to which she or he would be entitled if the sharp null were true. This \textcolor{black}{reference} distribution takes the form
\begin{equation}\label{eqn: general perm dist}
	\hatPerm_{T}(t) = \frac{1}{|\Omega|}\sum_{\mathbf{w} \in \Omega}\indicatorFunction{T\left(\yobs, \mathbf{w}\right) \leq t}.
\end{equation}
 While $\Rand_T = \hatPerm_T$ under the sharp null, under the weak null $\hatPerm_{T}$ is a random distribution function as it varies with $\mathbf{Z}$. Inference using $\hatPerm_T$ proceeds as though $\mathbf{y}(\mathbf{Z})$ would have been the observed response for any $\mathbf{w}\in \Omega$.  As the true response $\mathbf{y}(\mathbf{w})$ under assignment $\mathbf{w}$ need not align with $\yobs$, $\hatPerm_T$ does not actually reflect the true randomization distribution under the weak null. This gives rise to potentially anti-conservative inference should $\hatPerm_T$ be used to test the weak null hypothesis.


For $\alpha \in (0, 1)$ define the Fisher randomization test of nominal level $\alpha$ by
\begin{equation}\label{eqn: permutation test general}
	\varphi_{ T}(\alpha) = \indicatorFunction{T(\yobs, \mathbf{Z}) \geq \hatPerm_{T}^{-1}(1 - \alpha)}.
\end{equation}

Under the sharp null, $\mathbb{E}\{\varphi_T(\alpha)\} \leq \alpha$ for any sample size as $\hatPerm_{T} = \Rand_{T}$. Throughout this paper, we examine the extent to which certain choices of test statistics entitle us to asymptotic Type I error control at $\alpha$ when $\varphi_T(\alpha)$ is used to conduct inference but only the weak null holds. For a given test statistic $T$, we will often proceed by juxtaposing its true limiting behavior under the randomization distribution $\Rand_T$ with the limiting behavior of $\hatPerm_T$, the randomization distribution if we (incorrectly) assumed that the sharp null held.

\subsection{Towards a unified mode of inference}\label{subsec: unified mode of inference}
Suppose that for a test statistic $T(\yobs, \bfZ)$ based upon the observed outcomes $\yobs$ and the treatment allocation $\bfZ$,
\begin{itemize}
    \item[(a)] $\hatPerm_{T}$ converges weakly in probability to a fixed distribution $\Perm_{T,\infty}$ as $N\rightarrow \infty$; and
    \item[(b)] $\Rand_{T}$ converges pointwise to a fixed distribution $\Rand_{T,\infty}$ at all continuity points of $\Rand_{T,\infty}$. \textcolor{black}{Formally,
       $\Rand_{T}(t) \rightarrow \Rand_{T, \infty}(t) \quad \forall\, t \in \text{cont}(\Rand_{T, \infty})$,
    where $\text{cont}(\Rand_{T, \infty})$ is the set of continuity points of $\Rand_{T, \infty}$.}
\end{itemize}
The test statistic $T(\yobs, \bfZ)$  is called \textit{asymptotically sharp-dominant} if, under $H_{N}$,  $\Perm_{T,\infty}(t)\leq \Rand_{T,\infty}(t)$ for any scalar $t$. This implies that the $(1 - \alpha)$ quantile of $\Perm_{T,\infty}$ is at or above the $(1 - \alpha)$ quantile of $\Rand_{T,\infty}$.  If $T(\yobs, \bfZ)$ is asymptotically sharp-dominant, then inference based upon the reference distribution $\hatPerm_T$ will be asymptotically conservative even if only $H_N$ holds  \citep[Proposition 4]{RandTestsWeakNulls}, satisfying $\limsup \mathbb{E}\{\varphi_T(\alpha)\} \leq \alpha$ as $N\rightarrow \infty$ while maintaining exactness should the sharp null be true.

Many common test statistics are not asymptotically sharp-dominant over all elements of the weak null. For instance, with univariate potential outcomes and under a completely randomized design with imbalanced treated and control groups, the absolute difference in means $T(\yobs, \bfZ) = \sqrt{N}|\tauhat|$ is not generally asymptotically sharp-dominant as there exist sequences of potential outcomes satisfying the weak null such that $\liminf \mathbb{E}\{\varphi_T(\alpha)\} > \alpha$; see \citet{din17}, \citet[Cor. 3]{RandTestsWeakNulls}, or \citet{loh17} for details.  For this test statistic,  simply studentizing by the usual standard error estimator ensures sharp dominance.  However, studentization fails to generalize to other more complicated test statistics and complex experimental designs.  Significant efforts have recovered appropriate studentization techniques for some test statistics, but each test statistic requires its own separate analysis \citep{RandTestsWeakNulls}.  For some experimental designs, studentizing the difference in means is not sufficient to regain asymptotically valid inference even in the univariate case; we explore this topic in \S \ref{sec: examples} and \S\ref{subsec: studentization in rerandomization} in the context of rerandomization. In \S\ref{sec: parametric prepiv}, we present a general method called Gaussian prepivoting which both recovers studentization when it alone would be sufficient, but also yields asymptotic sharp-dominance in circumstances where studentization would be insufficient. Before describing the method, we recall a few important results on the difference in means in completely randomized designs which underpin the success of Gaussian prepivoting.


\section{Useful results for the difference-in-means in completely randomized designs}\label{sec: useful}
\subsection{Asymptotic normality and conservative covariance estimation for the randomization distribution}
Consider the distribution of $\sqrt{N}(\hat{\bm{\tau}} - \bar{\bm{\tau}}, \hat{\bm{\delta}})^\T$ in a completely randomized design. Under Assumptions \SLLNAssumptions, a finite population central limit theorem applies \citep{FiniteCLT}, and $\sqrt{N}(\hat{\bm{\tau}} - \bar{\bm{\tau}}, \hat{\bm{\delta}})^\T$ converges in distribution to a mean-zero multivariate Gaussian with covariance matrix $V$ of the form
	\begin{align*}
	V\;\; &= \begin{pmatrix} V_{\tau \tau} & V_{\tau \delta}\\ V_{\delta \tau} & V_{\delta \delta}\end{pmatrix};\\
			V_{\tau\tau} & = p^{-1}\Sigma_{y(1),\infty} + (1 - p)^{-1}\Sigma_{y(0),\infty} - \Sigma_{\tau,\infty};       \\
			V_{\delta\delta}       & = \{p(1 - p)\}^{-1}\Sigma_{x,\infty};\\
			V_{\tau \delta}   & = p^{-1}\Sigma_{y(1)x,\infty} + (1-p)^{-1}\Sigma_{y(0)x,\infty} = V_{\delta\tau}^\T.\end{align*}

While $V_{\delta\delta}$ and $V_{\tau\delta}$ can be consistently estimated, $V_{\tau\tau}$ cannot be in the presence of effect heterogeneity due to its dependence on $\Sigma_{\tau}$,  the covariance of the unobserved treatment effects. Consequently, one cannot consistently estimate the probability that $\sqrt{N}(\hat{\bm{\tau}}-\bar{\bm{\tau}})$ falls within a given region $\mathcal{B}$. While consistent variance estimates are not available, there are several covariance estimators $\hat{V}_{\tau\tau}(\yobs, \mathbf{Z})$ for $V_{\tau\tau}$ satisfying $\hat{V}_{\tau\tau} - V_{\tau\tau} \overset{p}{\rightarrow} \Delta$ for some $\Delta \succeq 0$ under Assumptions \ref{asm: non-degen sampling limit} - \ref{asm: bounded fourth moment} in completely randomized designs. These estimators typically have the property that $\Sigma_{\tau\tau} = 0$ implies consistency, rather than asymptotic conservativeness; see \citet{decompTreatmentEffectVar} for more details.  So while the matrix $V$ cannot generally be consistently estimated, one can construct an estimate converging in probability to a matrix $\bar{\bar{{V}}}$ of the form


\begin{align*}
\bar{\bar{{V}}}\;\; &= \begin{pmatrix} V_{\tau \tau} + \Delta & V_{\tau \delta}\\ V_{\delta \tau} & V_{\delta \delta}\end{pmatrix}\end{align*}
	with $\Delta \succeq 0$.

	As an illustration, consider the conventional covariance estimator for the difference in means in a two-sample problem, $\hat{V}_{\tau\tau} = N\left(\hat{\Sigma}_{y(1)}/n_1 + \hat{\Sigma}_{y(0)}/n_0\right)$ with \begin{align*}
			\hat{\Sigma}_{y(1)}  & = \frac{1}{n_{1} - 1}\sum_{i = 1}^{N}Z_i\left(y_i(1) - n_1^{-1}\sum_{i=1}^NZ_iy_i(1)\right)\left(y_i(1) - n_1^{-1}\sum_{i=1}^NZ_iy_i(1)\right)^\T
		\end{align*} and the analogous for $\hat{\Sigma}_{y(0)}$. Under both completely randomized experiments and rerandomized experiments with balance criterion satisfying Condition \ref{cond: phi}, this estimator satisfies $\hat{V}_{\tau\tau} - V_{\tau\tau}\overset{p}{\rightarrow} \Sigma_{\tau,\infty} \succeq 0$ under Assumptions \ref{asm: non-degen sampling limit} - \ref{asm: bounded fourth moment}.

	\subsection{Limiting behavior of the reference distribution}
Suppose we have a completely randomized design, and consider the random variable \begin{align*}&\{\sqrt{N}\hat{\tau}(\tilde{\mathbf{y}}(\mathbf{Z}), \mathbf{W}),\sqrt{N}\hat{\delta}(\mathbf{x}, \mathbf{W})\}^\T\\ &= \sqrt{N}\left\{\frac{1}{n_1}\sum_{i=1}^N{W_i\tilde{\mathbf{y}}_i(Z_i)} - \frac{1}{n_0}\sum_{i=1}^N{(1-W_i)\tilde{\mathbf{y}}_i(Z_i)},\; \frac{1}{n_1}\sum_{i=1}^N{W_i\mathbf{x}_i} - \frac{1}{n_0}\sum_{i=1}^N{(1-W_i)\mathbf{x}_i}\right\}^\T \end{align*} where $\mathbf{Z}$ and $\mathbf{W}$ are independent, identically distributed, and drawn uniformly from $\Omega$ and $\tilde{\mathbf{y}}_i(Z_i) = \mathbf{y}_i(Z_i) - Z_i\bar{\bm{\tau}}$, such that $\tilde{\mathbf{y}}(\mathbf{Z}) = \yobs - \mathbf{Z}\bar{\bm{\tau}}^\T$.
\begin{proposition}\label{prop:permute}
    Subject to Assumptions \SLLNAssumptions, under a completely randomized design the distribution of $\{\sqrt{N}\hat{\tau}(\tilde{\mathbf{y}}(\mathbf{Z}), \mathbf{W}),\sqrt{N}\hat{\delta}(\mathbf{x}, \mathbf{W})\}^\T \mid \mathbf{Z}$ converges weakly in probability to a multivariate Gaussian measure, with mean zero and covariance $\tilde{V}$ of the form
    \begin{align*}
    \tilde{V}\;\; &= \begin{pmatrix} \tilde{V}_{\tau \tau} & \tilde{V}_{\tau \delta}\\ \tilde{V}_{\delta \tau} & \tilde{V}_{\delta \delta}\end{pmatrix};\\
    			\tilde{V}_{\tau\tau} & = (1-p)^{-1}\Sigma_{y(1),\infty} + p^{-1}\Sigma_{y(0),\infty};       \\
    			\tilde{V}_{\delta\delta}       & = \{p(1 - p)\}^{-1}\Sigma_{x,\infty};\\
    			\tilde{V}_{\tau \delta}   & = (1-p)^{-1}\Sigma_{y(1)x,\infty} + p^{-1}\Sigma_{y(0)x,\infty} = \tilde{V}_{\delta\tau}^\T.\end{align*}
\end{proposition}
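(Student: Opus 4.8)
The plan is to condition on $\mathbf{Z}$ and recognize the conditional distribution of
$S_N := \{\sqrt{N}\hat{\tau}(\tilde{\mathbf{y}}(\mathbf{Z}),\mathbf{W}),\,\sqrt{N}\hat{\delta}(\mathbf{x},\mathbf{W})\}^{\T}$
over the randomness in $\mathbf{W}$ as the scaled treated-minus-control difference in means of a completely randomized experiment run on the \emph{fixed} finite population whose $i$th unit carries outcome $\tilde{\mathbf{y}}_i(Z_i)$ and covariate $\mathbf{x}_i$. Since the summands defining $\hat{\tau}$ and $\hat{\delta}$ do not depend on $W_i$, this is a population in which the potential outcomes under ``treatment'' and under ``control'' coincide, so it is a degenerate ``sharp-null'' design: the conditional mean of $S_N$ is exactly $\mathbf{0}$ for every realization of $\mathbf{Z}$, and the conditional covariance is the finite-sample Neyman covariance of a difference in means, a deterministic function $\hat{V}_N(\mathbf{Z})$ of $n_1,n_0,N$ and the pooled first and second sample moments of the pairs $(\tilde{\mathbf{y}}_i(Z_i),\mathbf{x}_i)$. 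With this reduction in hand, two things remain: (i) identify the probabilistic limit of $\hat{V}_N(\mathbf{Z})$, and (ii) promote the fixed-population finite population central limit theorem of \citet{FiniteCLT} to the weak-convergence-in-probability statement claimed.

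For (i), partition $\{1,\dots,N\}$ into the treated block $\mathcal{T}=\{i:Z_i=1\}$ and the control block $\mathcal{C}=\{i:Z_i=0\}$. On $\mathcal{T}$ the outcomes are $\mathbf{y}_i(1)-\bar{\bm{\tau}}$ and on $\mathcal{C}$ they are $\mathbf{y}_i(0)$; each block is a simple random sample of the corresponding finite population of potential outcomes, so finite-population laws of large numbers (valid under Assumptions~\SLLNAssumptions) give that the within-block sample means, covariances, and cross-covariances with $\mathbf{x}$ converge in probability to $\bar{\mathbf{y}}_{\infty}(1)-\bar{\bm{\tau}}_{\infty}$, $\bar{\mathbf{y}}_{\infty}(0)$, $\Sigma_{y(1),\infty}$, $\Sigma_{y(0),\infty}$, $\Sigma_{y(1)x,\infty}$, $\Sigma_{y(0)x,\infty}$ and $\bar{\mathbf{x}}_{\infty}$. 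The role of the centering $\tilde{\mathbf{y}}_i(Z_i)=\mathbf{y}_i(Z_i)-Z_i\bar{\bm{\tau}}$ is precisely that $\bar{\mathbf{y}}_{\infty}(1)-\bar{\bm{\tau}}_{\infty}=\bar{\mathbf{y}}_{\infty}(0)$, so the treated and control block means of $\tilde{\mathbf{y}}_i(Z_i)$ agree in the limit and the between-block contributions to the pooled outcome covariance and to the pooled outcome/covariate cross-covariance vanish asymptotically. Applying the standard two-block decomposition of a pooled (cross-)covariance then shows the pooled outcome covariance converges in probability to $p\,\Sigma_{y(1),\infty}+(1-p)\Sigma_{y(0),\infty}$ and the pooled outcome/covariate cross-covariance to $p\,\Sigma_{y(1)x,\infty}+(1-p)\Sigma_{y(0)x,\infty}$, while the pooled covariate covariance is $\Sigma_x\to\Sigma_{x,\infty}$. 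Substituting these into the finite-sample Neyman covariance (with equal ``treatment'' and ``control'' outcomes, hence zero treatment-effect variance) and using $n_1/N\to p$ yields $\hat{V}_N(\mathbf{Z})\overset{p}{\to}\tilde{V}$ with the displayed blocks, since $\tfrac{1}{p(1-p)}\{p\,\Sigma_{y(1),\infty}+(1-p)\Sigma_{y(0),\infty}\}=(1-p)^{-1}\Sigma_{y(1),\infty}+p^{-1}\Sigma_{y(0),\infty}$ and likewise for the cross block, and $\tfrac{1}{p(1-p)}\Sigma_{x,\infty}=\tilde{V}_{\delta\delta}$.

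For (ii), I would invoke the subsequence characterization of convergence in probability: it suffices to show that every subsequence of $N$ admits a further subsequence along which, for almost every realization of the treatment-allocation sequence $\mathbf{Z}$, the conditional law of $S_N$ given $\mathbf{Z}$ converges weakly to $\mathcal{N}(\mathbf{0},\tilde{V})$. Extracting from the in-probability convergence of the pooled sample moments established in (i), along such a sub-subsequence the fixed populations $\{(\tilde{\mathbf{y}}_i(Z_i),\mathbf{x}_i)\}_{i\le N}$ satisfy the hypotheses of \citet{FiniteCLT} almost surely: the Lindeberg/moment condition holds because Assumption~\ref{asm: bounded fourth moment} together with boundedness of $\bar{\bm{\tau}}$ keeps the centered pooled fourth moments of the $\tilde{\mathbf{y}}_i(Z_i)$ and the $\mathbf{x}_i$ bounded, so the maximal squared within-population deviation is $O(N^{1/2})$ and the Lindeberg ratio is $O(N^{-1/2})$, and $\hat{V}_N(\mathbf{Z})\to\tilde{V}$ supplies the limiting covariance. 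This gives weak convergence of the conditional laws to $\mathcal{N}(\mathbf{0},\tilde{V})$ almost surely, hence in-probability convergence of the conditional distribution functions at every $t$; as every subsequence has such a further subsequence with the same limit, the full sequence converges weakly in probability, which is the assertion. Any degeneracy of $\tilde{V}$ is handled by the Cram\'er--Wold device: for each fixed $a$ with $a^{\T}\tilde{V}a>0$ the scalar version of the above argument applies to the scalar difference in means $a^{\T}S_N$, while when $a^{\T}\tilde{V}a=0$ one has $\operatorname{Var}(a^{\T}S_N\mid\mathbf{Z})=a^{\T}\hat{V}_N(\mathbf{Z})a\overset{p}{\to}0$, so $a^{\T}S_N\overset{p}{\to}0$ conditionally on $\mathbf{Z}$ by Chebyshev.

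The main obstacle is step (ii): the bookkeeping required to pass from a theorem about a single deterministic sequence of finite populations to the ``in probability over $\mathbf{Z}$'' mode of convergence demanded here, i.e., verifying that the moment and Lindeberg conditions of \citet{FiniteCLT} hold along the \emph{random} sequence of populations $\{(\tilde{\mathbf{y}}_i(Z_i),\mathbf{x}_i)\}$ with probability tending to one and robustly enough to close the subsequence argument. Everything else --- the identification of $\tilde{V}$ in step (i) and the Cram\'er--Wold reduction --- is routine finite-population moment computation.
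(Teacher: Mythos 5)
Your proposal is correct and follows essentially the same route as the proof the paper defers to (Theorem 1 of \citet{RandTestsWeakNulls}, echoed in the appendix's analysis of the regression-adjusted case): condition on $\mathbf{Z}$, view the reference distribution as a completely randomized experiment on the imputed sharp-null population $\{(\tilde{\mathbf{y}}_i(Z_i), \mathbf{x}_i)\}_{i=1}^N$, show its pooled second moments limit to $p\,\Sigma_{y(1),\infty}+(1-p)\Sigma_{y(0),\infty}$ (and the analogous covariate cross terms), and then apply the finite population central limit theorem of \citet{FiniteCLT} conditionally, yielding the stated $\tilde{V}$ after the $\{p(1-p)\}^{-1}$ rescaling. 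The only difference is cosmetic: the cited argument gets almost-sure convergence of the imputed-population moments directly from the fourth-moment assumption via finite population strong laws (so the CLT conditions hold for almost every $\mathbf{Z}$-sequence, and conditional convergence passes to weak convergence in probability as in the appendix's Lemma on conditional convergence), whereas you reach the same conclusion through in-probability moment convergence plus a subsequence extraction.
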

The proof of this statement is contained within the proof of Theorem 1 in \citet{RandTestsWeakNulls} and is omitted. Under the sharp null, $\tilde{V} = V$ as $\mathbf{y}_i(1) = \mathbf{y}_i(0)$ for all $i$. Under the weak null however, while $\tilde{V}_{\delta\delta} = V_{\delta \delta}$ generally $\tilde{V}_{\tau\tau} \neq V_{\tau\tau}$ and $\tilde{V}_{\delta \tau} \neq V_{\delta \tau}$. The divergence between $V$ and $\tilde{V}$ can render randomization tests for the weak null hypothesis anti-conservative; examples are given in \S \ref{sec: examples}. We now describe how prepivoting may be used to guarantee asymptotic correctness when inference for the weak null hypothesis is conducted using a reference distribution generated under the sharp null.

\section{Gaussian Prepivoting}\label{sec: parametric prepiv}

\subsection{Prepivoting with an estimated pushforward measure}\label{sec: gaussian}
Consider functions $f_{\eta}:\R^{d} \rightarrow \R$ subject to the following requirement:
\begin{condition}\label{cond: f}
For any $\eta \in \Xi$, $f_{\eta}(\cdot): \mathbb{R}^d\mapsto \mathbb{R}_+$ is continuous, quasi-convex, and nonnegative with $f_\eta(\mathbf{t}) = f_\eta(-\mathbf{t})$ for all $\mathbf{t}\in \mathbb{R}^d$. Furthermore, $f_{\eta}(\mathbf{t})$ is jointly continuous in $\eta$ and $\mathbf{t}$.
\end{condition}

We begin with statistics for $H_N$ of the form
\begin{align}\label{eq: Tform}T(\yobs, \mathbf{Z}) &= f_{\hat{\xi}}(\sqrt{N}\hat{\bm{\tau}}),\end{align} where $\hat{\xi} = \hat{\xi}(\tilde{\mathbf{y}}(\mathbf{Z}), \mathbf{Z})$ satisfies the following condition for some set $\Xi$:

\begin{condition}\label{cond: xi}
    With $\mathbf{W}, \mathbf{Z}$ independent and each uniformly distributed over $\Omega$,\begin{align*}
    \hat{\xi}(\tilde{\mathbf{y}}(\mathbf{Z}), \mathbf{Z})  \overset{p}{\rightarrow}\xi;\;\;
    \hat{\xi}(\tilde{\mathbf{y}}(\mathbf{Z}), \mathbf{W}) \overset{p}{\rightarrow}\tilde{\xi}, 
    \end{align*}for some $\xi, \tilde{\xi} \in \Xi$.
\end{condition}

As will be shown in \S \ref{sec: examples}, several commonly encountered statistics for Neyman's null are of this form.  A detailed discussion of Condition~\ref{cond: f} is included in the appendix. Suppose further that one employs a covariance estimator $\hat{V}(\tilde{\mathbf{y}}(\mathbf{Z}), \mathbf{Z})$ with the following property:
\begin{condition}\label{cond: var}
With $\mathbf{W}, \mathbf{Z}$ independent, both uniformly distributed over $\Omega$, and for some  $\Delta \succeq 0$, $\Delta \in \mathbb{R}^{d\times d}$,\begin{align*} \hat{V}(\tilde{\mathbf{y}}(\mathbf{Z}), \mathbf{Z}) - V \overset{p}{\rightarrow} \begin{pmatrix}\Delta & 0_{d,k}\\  0_{k,d} & 0_{k,k} \end{pmatrix};\;\;
\hat{V}(\tilde{\mathbf{y}}(\mathbf{Z}), \mathbf{W}) - \tilde{V} \overset{p}{\rightarrow} 0_{(d+k), (d+k)}.\end{align*}\end{condition}

    As a concrete example satisfying Conditions~\ref{cond: f}-\ref{cond: var}, suppose that $f_{\eta}(\mathbf{t}) = \mathbf{t}^{\T}\eta^{-1}\mathbf{t}^{\T}$ and $\hat{\xi}(\bfy(\bfz), \bfw) = \hat{V}_{Neyman}(\bfy(\bfz), \bfw)$ with $\hat{V}_{Neyman}$ denoting the usual Neyman variance estimator of \citet{NeymanVariance}; numerous other examples are included in Section~\ref{sec: examples}.  Observe that when assuming the weak null for the purpose of testing, $\tilde{\mathbf{y}}(\mathbf{Z}) = \yobs$ and $\hat{\bm{\tau}} - \bar{\bm{\tau}} = \hat{\bm{\tau}}$.
Gaussian prepivoting transforms the test statistic $T(\bfy(\bfz), \bfw) = {f}_{\hat{\xi}}(\sqrt{N}\hat{\bm{\tau}}(\bfy(\bfz), \bfw))$ into a new statistic of the form
\begin{align}
    G(\mathbf{y}(\bfz), \bfw) = \frac{\gamma^{(d+k)}_{\bm{0}, \hat{V}(\bfy(\bfz), \bfw)}\left\{(\mathbf{a}, \mathbf{b})^\T: {f}_{\hat{\xi}}(\mathbf{a}) \leq T(\bfy(\bfz), \bfw)\; \wedge\; \phi(\mathbf{b})=1\right \}}{\gamma^{(k)}_{\bm{0}, \hat{V}_{\delta \delta}}\left\{\mathbf{b}:  \phi(\mathbf{b})=1\right\}}
\label{eqn: gaussian}
\end{align}
\color{black}
where $\gamma^{(p)}_{\mu, \Sigma}(\mathcal{B})$ is the $p$-dimensional Gaussian measure of a set $\mathcal{B}$ with mean parameter $\bm{\mu}$ and covariance $\Sigma$, i.e.
\begin{align*}
    \gamma^{(p)}_{\mu, \Sigma}(\mathcal{B}) &= \frac{1}{\sqrt{(2\pi)^p|\Sigma|}}\int_{\mathbf{x} \in \mathcal{B}}\exp\left\{-\frac{1}{2}(\mathbf{x}-\bm{\mu})^\T\Sigma^{-1}(\bm{x}-\bm{\mu})\right\}\;dx.
\end{align*}  For $(\mathbf{A},\mathbf{B})^\T$ jointly multivariate normal with mean zero and covariance $\hat{V}$, $\mathbf{A}\in \mathbb{R}^d$, $\mathbf{B}\in \mathbb{R}^k$, $G(\mathbf{y}(\mathbf{Z}), \mathbf{Z})$ represents the $f_{\hat{\xi}}$-pushforward measure of $\mathbf{A} \mid \phi(\mathbf{B}) = 1$ evaluated on the set $(-\infty, T(\mathbf{y}(\mathbf{Z}), \mathbf{Z})]$. That is, $G(\mathbf{y}(\mathbf{Z}), \mathbf{Z})$ treats ${f}_{\hat{\xi}}$ and $\hat{V}$ as fixed and computes the conditional probability that ${f}_{\hat{\xi}}(\mathbf{A})$ falls at or below the observed value for $T(\yobs, \mathbf{Z})= {f}_{\hat{\xi}}(\sqrt{N}\hat{\bm{\tau}})$ given that $\varphi(\mathbf{B})=1$. From the perspective of hypothesis testing, $G(\mathbf{y}(\mathbf{Z}), \mathbf{Z})$ is 1 minus the large-sample $p$-value for $T(\yobs, \mathbf{Z})$ leveraging the finite population central limit theorem and the estimated covariance $\hat{V}$.


        We now describe how to use the prepivoted statistic $G(\yobs, \bfZ)$ to provide a single procedure that is both exact for $H_F$ and asymptotically conservative for $H_{N}$.  In order to provide a precise implementation of this, we give detailed pseudocode in Algorithm~\ref{alg: Gaussian prepivoting} and provide example code through the appendix.  First, we compute the prepivoted test statistic $G(\yobs, \bfZ)$ given the observed data; this proceeds according to Equation~\ref{eqn: gaussian} and is Step~1 of Algorithm~\ref{alg: Gaussian prepivoting}.  Next, we construct the reference distribution $\hatPerm_{G}(\cdot)$, the construction of which requires imputing counterfactual outcomes as if Fisher's sharp null held; this is Step~2 of Algorithm~\ref{alg: Gaussian prepivoting}.  Finally, the $p$-value for testing $H_{N}$ is computed and we reject the null when this lies below or at the nominal level $\alpha$.

		\begin{algorithm}[H] 
			\SetAlgoLined
			\KwIn{An observed treatment allocation $\bfz$, with observed responses $\mathbf{y}(\bfz)$, test statistic $T(\mathbf{y}(\bfz), \bfz) = f_{\hat{\xi}}(\sqrt{N}\hat{\bm{\tau}}_{obs})$ and covariance estimator $\hat{V}(\mathbf{y}(\bfz), \bfz)$}
			\KwResult{The $p$-value for the Gaussian prepivoted test statistic}
			\textbf{Step 1: The observed prepivoted statistic}\\
			Compute $f_{\hat{\xi}(\mathbf{y}(\bfz), \bfz)}(\cdot)$; $\hat{V}(\mathbf{y}(\bfz), \bfz)$.
			Compute
			\begin{align*}
			 g_{\mathbf{z}} &= \frac{\gamma^{(d+k)}_{\bm{0}, \hat{V}(\mathbf{y}(\bfz), \bfz)}\left\{(\mathbf{a}, \mathbf{b})^\T: {f}_{\hat{\xi}(\mathbf{y}(\mathbf{z}), \bfz)}(\mathbf{a}) \leq T(\mathbf{y}(\mathbf{z}), \mathbf{z})\; \wedge\; \phi(\mathbf{b})=1\right \}}{\gamma^{(k)}_{\bm{0}, \hat{V}_{\delta \delta}(\mathbf{y}(\bfz), \bfz)}\left\{\mathbf{b}: \phi(\mathbf{b})=1\right\}}
			 \end{align*}

            \textbf{Step 2: The reference distribution $\hatPerm_{G}$}\\
			\For{$\mathbf{w} \in \Omega$}{
				Compute $f_{\hat{\xi}(\mathbf{y}(\bfz), \mathbf{w})}(\cdot)$; $\hat{V}(\mathbf{y}(\bfz), \mathbf{w})$.

				Compute \begin{align*}
				 g_{\mathbf{w}} &= \frac{\gamma^{(d+k)}_{\bm{0}, \hat{V}(\mathbf{y}(\bfz), \mathbf{w})}\left\{(\mathbf{a}, \mathbf{b})^\T: {f}_{\hat{\xi}(\mathbf{y}(\mathbf{z}), \mathbf{w})}(\mathbf{a}) \leq T(\mathbf{y}(\mathbf{z}), \mathbf{w})\; \wedge\; \phi(\mathbf{b})=1\right \}}{\gamma^{(k)}_{\bm{0}, \hat{V}_{\delta \delta}(\mathbf{y}(\bfz), \mathbf{w})}\left\{\mathbf{b}: \phi(\mathbf{b})=1\right\}}
				 \end{align*}
			}
			\Return{ \begin{align*} p_{val} &= \frac{1}{|\Omega|}\sum_{\mathbf{w} \in \Omega}\indicatorFunction{g_\mathbf{w}\geq g_{\mathbf{z}}};\\
			\varphi_G(\alpha) &= \indicatorFunction{p_{val}\leq \alpha}.
			\end{align*}}
			\caption{Inference for the weak null through Gaussian prepivoting}\label{alg: Gaussian prepivoting}
		\end{algorithm}
\color{black}
Observe that \textcolor{black}{$1-g_{\mathbf{z}}$} defined within Algorithm \ref{alg: Gaussian prepivoting} is the usual large-sample $p$-value based upon a Gaussian approximation and using the covariance estimator $\hat{V}$. The large-sample test compares \textcolor{black}{$1-g_{\mathbf{z}}$} to $\alpha$, the desired Type I error rate, and rejects if \textcolor{black}{$1-g_{\mathbf{z}}\leq \alpha \Leftrightarrow g_{\mathbf{z}} \geq 1-\alpha$}. The Gaussian prepivoted randomization test instead rejects if \textcolor{black}{$g_{\mathbf{z}} \geq \hatPerm_G^{-1}(1-\alpha)$}. The following Theorem, in concert with Lemma 11.2.1 of \citet{TestingStatHyp}, show under our assumptions $\hatPerm_G^{-1}(1-\alpha)\overset{p}{\rightarrow} 1-\alpha$, such that the prepivoted randomization test is asymptotically equivalent to large sample test under the weak null. By using $\hatPerm_G^{-1}(1-\alpha)$ instead of $1-\alpha$, exactness under the sharp null is preserved.

\begin{theorem}\label{thm: gaussian}
    Suppose we have either a completely randomized design or a rerandomized design with balance criterion $\phi$ satisfying Condition~1. Suppose ${T}(\yobs, \mathbf{Z})$ is of the form (4) for some $f_\eta$ and $\hat{\xi}$ satisfying Conditions 2 and 3. Suppose further that we employ a covariance estimator $\hat{V}$ satisfying Condition 4 when forming the prepivoted test statistic $G(\yobs, \bfZ)$. Then, under $H_N: \bar{\bm{\tau}} = 0$ and under Assumptions 1-3, $G(\mathbf{y}(\mathbf{Z}), \mathbf{Z})$ converges in distribution to a random variable $\tilde{U}$ taking values in $[0,1]$ satisfying
    \begin{align*}
        \mathbb{P}(\tilde{U} \leq t) \geq t,
    \end{align*}
    for all $t \in [0,1]$. Furthermore, the distribution $\hatPerm_G(t)$ satisfies $\hatPerm_G(t)\overset{p}{\rightarrow} t$ for all $t\in [0,1]$.\looseness=-1
\end{theorem}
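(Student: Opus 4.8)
The plan is to establish two separate claims: (i) that $G(\mathbf{y}(\mathbf{Z}), \mathbf{Z})$ converges in distribution to a random variable $\tilde{U}$ stochastically dominating a standard uniform, and (ii) that the reference distribution $\hatPerm_G(t) \overset{p}{\rightarrow} t$ pointwise on $[0,1]$. Both claims will follow from identifying limiting Gaussian approximations for the relevant randomization and reference distributions, then pushing those forward through the continuous map $f_\eta$.

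\textbf{Proof of (i).} First I would note that under $H_N$ we have $\tilde{\mathbf{y}}(\mathbf{Z}) = \yobs$ and $\hat{\bm{\tau}} - \bar{\bm{\tau}} = \hat{\bm{\tau}}$, so $T(\yobs, \mathbf{Z}) = f_{\hat{\xi}}(\sqrt{N}\hat{\bm{\tau}})$ with $\hat{\xi} \overset{p}{\rightarrow} \xi$ and $\hat{V}(\yobs, \mathbf{Z}) \overset{p}{\rightarrow} \bar{\bar{V}}$, the matrix with $(1,1)$-block $V_{\tau\tau} + \Delta$ by Condition~4. The finite-population CLT (applied to the joint vector $\sqrt{N}(\hat{\bm{\tau}}, \hat{\bm{\delta}})^\T$, conditioned on the rerandomization event in the rerandomized case via the usual truncated-Gaussian limit under Condition~1) gives that $\sqrt{N}(\hat{\bm{\tau}}, \hat{\bm{\delta}})^\T$ converges in distribution to $(\mathbf{A}^\star, \mathbf{B}^\star)^\T$, where $\mathbf{B}^\star$ is conditioned to lie in $M = \{\mathbf{b}: \phi(\mathbf{b})=1\}$. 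Now $G(\yobs, \mathbf{Z})$ is a deterministic, jointly continuous function of the triple $(\hat{\xi}, \hat{V}, \sqrt{N}\hat{\bm{\tau}})$ — here I would invoke joint continuity of $f_\eta$ in $(\eta, \mathbf{t})$ from Condition~2, continuity of the Gaussian measure of the sublevel set $\{f_\eta(\mathbf{a}) \le s\}$ in $(\eta, s, \Sigma)$ (using quasi-convexity and the mirror symmetry to control the boundary, plus non-degeneracy of the covariance so the boundary has measure zero), and the Portmanteau-type continuity of $\gamma^{(k)}_{0,\Sigma}(M)$ in $\Sigma$. Applying the continuous mapping theorem, $G(\yobs, \mathbf{Z})$ converges in distribution to
\begin{align*}
\tilde{U} &= \frac{\gamma^{(d+k)}_{\mathbf{0}, \bar{\bar{V}}}\{(\mathbf{a},\mathbf{b})^\T : f_\xi(\mathbf{a}) \le f_\xi(\mathbf{A}^\star),\ \phi(\mathbf{b})=1\}}{\gamma^{(k)}_{\mathbf{0}, V_{\delta\delta}}\{\mathbf{b}: \phi(\mathbf{b})=1\}},
\end{align*}
which is the conditional probability (given $\phi$, $\xi$, $\bar{\bar{V}}$) that $f_\xi(\mathbf{A}) \le f_\xi(\mathbf{A}^\star)$ where $(\mathbf{A},\mathbf{B})$ is Gaussian with covariance $\bar{\bar{V}}$ conditioned on $\mathbf{B} \in M$. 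If $\mathbf{A}^\star$ had exactly the law induced by $\bar{\bar{V}}$, $\tilde U$ would be exactly uniform by the probability integral transform. But $\mathbf{A}^\star$ has the true $(1,1)$-block $V_{\tau\tau} \preceq V_{\tau\tau} + \Delta$, so $f_\xi(\mathbf{A}^\star)$ is stochastically smaller than $f_\xi(\mathbf{A})$ — this is precisely Anderson's lemma: for the symmetric quasi-convex $f_\xi$, adding the nonnegative-definite $\Delta$ inflates the sublevel-set probabilities, and the same domination survives conditioning on the symmetric convex set $M$. Hence $\mathbb{P}(\tilde U \le t) \ge t$ for all $t$. I expect this stochastic-domination step, and the care needed to make Anderson's lemma apply to the \emph{conditional} Gaussian law on $M$, to be the main obstacle; the cleanest route is to realize the conditioned vector as a mixture and apply Anderson coordinatewise, or to appeal to a version of the result already cited in \S\ref{sec: parametric prepiv}.

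\textbf{Proof of (ii).} For the reference distribution, I would use Proposition~\ref{prop:permute}: conditionally on $\mathbf{Z}$, the law of $\{\sqrt{N}\hat{\tau}(\tilde{\mathbf{y}}(\mathbf{Z}),\mathbf{W}), \sqrt{N}\hat{\delta}(\mathbf{x},\mathbf{W})\}^\T$ converges weakly in probability to the mean-zero Gaussian with covariance $\tilde V$, and by Condition~3 and Condition~4 the plug-in quantities satisfy $\hat{\xi}(\tilde{\mathbf{y}}(\mathbf{Z}),\mathbf{W}) \overset{p}{\rightarrow} \tilde\xi$ and $\hat V(\tilde{\mathbf{y}}(\mathbf{Z}),\mathbf{W}) \overset{p}{\rightarrow} \tilde V$. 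Crucially, the covariance used to \emph{enumerate} the reference distribution and the covariance of the limiting law of $\sqrt{N}\hat{\bm{\tau}}(\tilde{\mathbf{y}},\mathbf{W})$ now \emph{match} — both are $\tilde V$ — so the probability integral transform applies exactly in the limit. Concretely, $\hatPerm_G(t) = |\Omega|^{-1}\sum_{\mathbf{w}} \indicatorFunction{g_{\mathbf{w}} \le t}$ is the conditional CDF of $G(\yobs, \mathbf{W})$ given $\mathbf{Z}$, evaluated at $t$; writing $g_{\mathbf{W}}$ as the same jointly continuous functional of $(\hat{\xi}(\cdot,\mathbf{W}), \hat V(\cdot,\mathbf{W}), \sqrt{N}\hat{\bm{\tau}}(\tilde{\mathbf{y}},\mathbf{W}))$ and invoking the continuous mapping theorem for weak convergence in probability of random measures, $G(\yobs,\mathbf{W}) \mid \mathbf{Z}$ converges weakly in probability to $f_{\tilde\xi}(\tilde{\mathbf{A}})$ pushed through its own CDF conditioned on $\tilde{\mathbf{B}} \in M$, which is exactly $\mathrm{Uniform}[0,1]$ (non-degeneracy of $\tilde V_{\tau\tau}$ rules out atoms, so the CDF is continuous and the transform is exact). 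Weak convergence in probability to a continuous limit CDF is, by the characterization quoted in the technical note, exactly pointwise convergence in probability $\hatPerm_G(t) \overset{p}{\rightarrow} t$ for all $t \in [0,1]$. The one point requiring attention is checking that $f_{\tilde\xi}(\tilde{\mathbf A})$ has no atoms — i.e. that $\{\mathbf{a}: f_{\tilde\xi}(\mathbf{a}) = c\}$ is $\gamma$-null for every $c > 0$ — which follows from quasi-convexity together with the non-degeneracy of the conditional Gaussian on the convex body $M$.
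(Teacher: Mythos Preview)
Your proposal is correct and follows essentially the same route as the paper's proof: finite-population CLT plus plug-in convergence plus continuous mapping for part (i), Anderson's theorem for the stochastic-domination step, and Proposition~\ref{prop:permute} plus matching covariances plus the probability integral transform for part (ii). One small point of difference worth noting: where you anticipate difficulty in applying Anderson's lemma to the \emph{conditional} law on $M$, the paper sidesteps this by applying Anderson's theorem directly to the \emph{joint} $(d+k)$-dimensional Gaussian on the convex mirror-symmetric set $\{(\mathbf{a},\mathbf{b}): f_\xi(\mathbf{a}) \le v,\ \phi(\mathbf{b})=1\}$, so that the conditioning event is absorbed into the set and no mixture representation is needed; the denominators then match because $\bar{\bar{V}}_{\delta\delta} = V_{\delta\delta}$. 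For part (ii) the paper makes the continuous-mapping-for-random-measures step concrete via the two-independent-copies characterization of weak convergence in probability (Lemma~4.1 of \citet{lowDim_HighDim}), which is the device underlying the abstract statement you invoke.
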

\begin{corollary}
    Under the conditions of Theorem~\ref{thm: gaussian}; the prepivoted test statistic $G(\yobs, \bfZ)$ is asymptotically sharp dominant regardless of whether the base statistic $T(\yobs, \bfZ)$ was.  Consequently, $p$-values derived under $\hatPerm_{G}$ via Algorithm~1 are guaranteed to be exact under $H_{F}$ and asymptotically conservative under just $H_{N}$.
\end{corollary}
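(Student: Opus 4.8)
The plan is to obtain the corollary directly from the two conclusions of Theorem~\ref{thm: gaussian}, by checking them against the definition of asymptotic sharp-dominance given in \S\ref{subsec: unified mode of inference} and then invoking the two facts already recalled in the surrounding text: Proposition~4 of \citet{RandTestsWeakNulls}, and the finite-sample exactness of Fisher randomization tests.

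First I would verify clause (a) of the definition for $G$. Theorem~\ref{thm: gaussian} gives $\hatPerm_G(t)\overset{p}{\rightarrow} t$ for every $t\in[0,1]$. Because $G(\yobs,\bfZ)$ is valued in $[0,1]$ --- the numerator in its defining ratio \eqref{eqn: gaussian} is bounded by the denominator --- we also have $\hatPerm_G(t)=0$ for $t<0$ and $\hatPerm_G(t)=1$ for $t>1$ deterministically, so $\hatPerm_G(t)\overset{p}{\rightarrow}F_U(t)$ at every real $t$, where $F_U$ is the standard uniform distribution function. Since $F_U$ is continuous everywhere, by the notion of weak convergence in probability for random distribution functions recorded earlier this says $\hatPerm_G$ converges weakly in probability to the fixed law $\Perm_{G,\infty}:=F_U$, which is clause (a). Next I would check clause (b) together with the dominance inequality. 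The key observation is that $\Rand_G$, as defined in \eqref{eqn: general rand dist}, is precisely the (deterministic) distribution function of $G(\mathbf{y}(\bfZ),\bfZ)$ when $\bfZ\sim\mathrm{Unif}(\Omega)$; hence the first conclusion of Theorem~\ref{thm: gaussian} --- that under $H_N$ one has $G(\mathbf{y}(\bfZ),\bfZ)\overset{d}{\rightarrow}\tilde U$ --- is exactly the assertion that $\Rand_G(t)\to\mathbb{P}(\tilde U\leq t)$ at every continuity point of $t\mapsto\mathbb{P}(\tilde U\leq t)$, i.e.\ clause (b) holds with $\Rand_{G,\infty}(t)=\mathbb{P}(\tilde U\leq t)$. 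Theorem~\ref{thm: gaussian} further supplies $\mathbb{P}(\tilde U\leq t)\geq t=\Perm_{G,\infty}(t)$ for all $t\in[0,1]$ (and the inequality is trivial for $t\notin[0,1]$), so $\Perm_{G,\infty}(t)\leq\Rand_{G,\infty}(t)$ for every scalar $t$. By the definition in \S\ref{subsec: unified mode of inference}, this is precisely the statement that $G(\yobs,\bfZ)$ is asymptotically sharp-dominant; and since this property of $G$ is defined without reference to $T$, it holds whether or not the base statistic $T(\yobs,\bfZ)$ was asymptotically sharp-dominant.

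Finally I would translate asymptotic sharp-dominance of $G$ into the stated guarantees. Under $H_F$, $\bm{\tau}_i=\mathbf{0}$ forces $\mathbf{y}_i(1)=\mathbf{y}_i(0)=\mathbf{y}_i(Z_i)$, so $\mathbf{y}(\mathbf{w})=\yobs$ for every $\mathbf{w}\in\Omega$ and therefore $\hatPerm_G=\Rand_G$ identically; consequently the quantity $p_{val}$ returned by Algorithm~\ref{alg: Gaussian prepivoting} is a genuine randomization $p$-value, so $\mathbb{E}_{H_F}\{\varphi_G(\alpha)\}\leq\alpha$ for every $\alpha$ and every $N$ --- this is the finite-sample exactness of Fisher randomization tests recalled in \S\ref{sec: perm dists}, applied with $G$ playing the role of the test statistic. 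Under $H_N$, having established that $G$ is asymptotically sharp-dominant, Proposition~4 of \citet{RandTestsWeakNulls} yields $\limsup_{N\to\infty}\mathbb{E}_{H_N}\{\varphi_G(\alpha)\}\leq\alpha$; the mechanism, as noted just before Theorem~\ref{thm: gaussian}, is that $\hatPerm_G^{-1}(1-\alpha)\overset{p}{\rightarrow}1-\alpha$ (Theorem~\ref{thm: gaussian} with Lemma~11.2.1 of \citet{TestingStatHyp}), so the prepivoted randomization test is asymptotically equivalent under $H_N$ to rejecting when $G\geq 1-\alpha$, whose limiting rejection probability is at most $\mathbb{P}(\tilde U>1-\alpha)\leq\alpha$.

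I do not expect a genuine obstacle here: the corollary is essentially bookkeeping on top of Theorem~\ref{thm: gaussian}. The two places that need a little care are (i) recognizing that ``$G\overset{d}{\rightarrow}\tilde U$'' is the same statement as pointwise convergence of the true randomization distribution function $\Rand_G$ at the continuity points of its limit, so that clause (b) is obtained even when $t\mapsto\mathbb{P}(\tilde U\leq t)$ is not everywhere continuous; and (ii) phrasing the $H_F$ conclusion at the right level of precision --- ties among the values $g_{\mathbf{w}}$ render the randomization test exact only in the sense of guaranteed level control ($\leq\alpha$), not necessarily attained size $\alpha$.
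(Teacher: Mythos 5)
Your proposal is correct and follows essentially the same route as the paper, which treats the corollary as immediate bookkeeping on Theorem~\ref{thm: gaussian}: the two conclusions of the theorem verify clauses (a) and (b) of the sharp-dominance definition with $\Perm_{G,\infty}$ the standard uniform and $\Rand_{G,\infty}$ the law of $\tilde U$, after which Proposition~4 of \citet{RandTestsWeakNulls} (equivalently the quantile convergence $\hatPerm_G^{-1}(1-\alpha)\overset{p}{\rightarrow}1-\alpha$ via Lemma~11.2.1 of \citet{TestingStatHyp}) gives asymptotic conservativeness under $H_N$, and exactness under $H_F$ is the standard property of the Fisher randomization test applied to $G$. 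Your added care about ties and about convergence at continuity points is fine but does not change the argument.
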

\color{black}
Theorem \ref{thm: gaussian} states that under the weak null, $G(\yobs, \mathbf{Z})$ converges in distribution to a random variable which is stochastically dominated by the standard uniform. Meanwhile, the reference distribution for $G(\yobs, \mathbf{Z})$ constructed assuming (incorrectly) that the sharp null holds converges pointwise to the distribution function of a standard uniform. As a result, the randomization distribution for $G(\yobs, \mathbf{Z})$ is asymptotically sharp-dominant: the reference distribution generated in this manner yields asymptotically conservative inference for the weak null hypothesis, while maintaining exactness should the sharp null also hold.  \textcolor{black}{By exploiting the duality between hypothesis testing and confidence sets Theorem~\ref{thm: gaussian} provides the basis for generating exact and asymptotically conservative confidence sets for treatment effect; this is explored in the appendix.}\looseness=-1

\begin{remark}
Consider the function\begin{align*}
\hat{F}(t) = \frac{\gamma^{(d+k)}_{\bm{0}, \hat{V}}\left\{(\mathbf{a}, \mathbf{b})^\T: f_{\hat{\xi}}(\mathbf{a}) \leq t\; \wedge\; \phi(\mathbf{b})=1\right \}}{\gamma^{(k)}_{\bm{0}, \hat{V}_{\delta \delta}}\left\{\mathbf{b}:  \phi(\mathbf{b})=1\right\}}\end{align*}
the estimated distribution function for $f_{\hat{\xi}}(\sqrt{N}\hat{\bm{\tau}}) \mid \phi(\sqrt{N}\hat{\bm{\delta}}) = 1$ based upon a finite population central limit theorem. In special cases, the function $\hat{F}(t)$ may have a known closed form. This is true of the test statistics which are sharp-dominated by a $\chi^2_d$ distribution considered in \citet{RandTestsWeakNulls}, for example. Should this not be the case, one can approximate $\hat{F}(\cdot)$ by way of Monte-Carlo approximation, replacing the measures $\gamma_{\bm{0}, \hat{V}}$ and $\gamma_{\bm{0}, \hat{V}_{\delta \delta}}$ with estimates based upon a $B$ draws from a multivariate normal with mean $\bm{0}$ and covariance $\hat{V}$ when enumerating the reference distribution. Importantly, such Monte-Carlo approximation does \textit{not} corrupt finite-sample exactness under Fisher's sharp null.
\end{remark}

\subsection{Examples of Gaussian prepivoting}\label{sec: examples}
Through a series of examples, we now provide illustrations of the transformations achieved by (\ref{eqn: gaussian}). As will be demonstrated, the form recovers several randomization tests previously known to be valid for weak null hypotheses in the literature while providing a basis for new randomization tests for weak nulls using other test statistics.  \textcolor{black}{ These examples serve four objectives: (i) unify previous \textit{ad hoc} solutions under the framework of Gaussian prepivoting; (ii) provide an alternative approach to already valid procedures; (iii) highlight that prepivoting can succeed even where studentization fails; and (iv) extend randomization inference for $H_{F}$ and $H_{N}$ to new experimental designs.}

	\begin{example}[Absolute difference in means]\label{example: difference in means for cre}
		Let $\sqrt{N}\hat{{\tau}}$ be univariate, consider a completely randomized design with no rerandomization, and let $T_{DiM}(\yobs, \mathbf{Z}) = \sqrt{N}|\tauhat|$, with $f_{\eta}(t) = |t|$ and $\hat{\xi} = 1$. The randomization distribution for $T_{DiM}(\yobs, \mathbf{Z})$ is not asymptotically sharp-dominant, such that employing the reference distribution assuming that the sharp null holds may lead to anti-conservative inference. The conventional fix is to studentize $\sqrt{N}|\hat{{\tau}}|$ using a variance estimator estimator satisfying Condition \ref{cond: var}, forming instead $T_{Stu}(\yobs, \mathbf{Z}) = \sqrt{N}|\hat{{\tau}}|/\sqrt{\hat{V}_{\tau\tau}}$ \citep{loh17}.

		As $\phi(\cdot) = 1$ deterministically in a completely randomized design, Gaussian prepivoting via (\ref{eqn: gaussian}) yields the test statistic
		\begin{align*}
		    G_{DiM}(\mathbf{y}(\mathbf{Z}), \mathbf{Z}) = \gamma^{(1)}_{0, \hat{V}_{\tau\tau}}\{a: |a| \leq \sqrt{N}|\hat{{\tau}}|\} = 1 - 2\Phi\left(-\frac{\sqrt{N}|\hat{{\tau}}|}{\sqrt{\hat{V}_{\tau\tau}}}\right),
		\end{align*}
		where $\Phi(\cdot)$ is the standard normal distribution function. For any $\mathbf{Z}$, the pairs \\$\{G_{DiM}(\mathbf{y}(\mathbf{Z}), \mathbf{w}), T_{DiM}(\yobs, \mathbf{w})\}$ have rank correlation equal to 1 when computed for all $\mathbf{w}\in \Omega$. As a result, the reference distribution using the studentized difference in means assuming the sharp null will furnish identical $p$-values to those attained using Gaussian prepivoting. That is, in the univariate case Gaussian prepivoting is equivalent to studentization for completely randomized designs. This highlights \textcolor{black}{objectives (i) and (ii)}.
\end{example}
\begin{example}[Multivariate studentization] \label{example: mult}
Let $\sqrt{N}\hat{\bm{\tau}}$ now be multivariate and suppose we have a completely randomized design. \citet{RandTestsWeakNulls} suggest the test statistic
		\begin{align}\label{eqn: studentized test statistic}
			T_{\chi^{2}}(\yobs, \mathbf{Z}) &= \left(\sqrt{N} \hat{\bm{\tau}}\right)^{\T} \hat{V}_{\tau\tau}^{-1} \left(\sqrt{N} \hat{\bm{\tau}}\right), 
		\end{align}
		with $\hat{V}_{\tau\tau} = \frac{N}{n_{1}}\hat{\Sigma}_{y(1)} + \frac{N}{n_{0}}\hat{\Sigma}_{y(0)}$.  For this test statistic, $f_\eta(\mathbf{t}) = \mathbf{t}^\T\eta^{-1}\mathbf{t}$ and $\hat{\xi} = \hat{V}_{\tau\tau}$. \citet{RandTestsWeakNulls} show that under our assumptions, under the weak null this test statistic converges in distribution to $\sum_{i = 1}^{d}w_{i}\zeta_{i}^{2}$ where $w_{i} \in [0, 1]$ are weights and $\zeta_{1}, \ldots, \zeta_{d} \iid \Normal{0}{1}$ while the reference distribution of $T_{\chi^{2}}(\yobs, \mathbf{Z})$ attained assuming that the sharp null holds converges weakly in probability to the $\chi_{d}^{2}$-distribution. As a result, $T_{\chi^{2}}(\yobs, \mathbf{Z})$ is asymptotically sharp-dominant, and its reference distribution assuming the sharp null may be used for inference for the weak null hypothesis. Here, Gaussian prepivoting produces
		\begin{align*}
		    G_{\chi^2}(\mathbf{y}(\mathbf{Z}), \mathbf{Z}) = \gamma^{(d)}_{\bm{0},\hat{V}_{\tau\tau}}\{\mathbf{a}: \mathbf{a}^\T\hat{V}_{\tau\tau}^{-1}\mathbf{a} \leq T_{\chi^2}(\yobs, \mathbf{Z})\} = F_d\{T_{\chi^2}(\yobs, \mathbf{Z})\},
		\end{align*}
		where $F_d(\cdot)$ is the distribution function of a $\chi^2_d$ random variable. For any $\mathbf{Z}$, the pairs \{$G_{\chi^2}(\mathbf{y}(\mathbf{Z}), \mathbf{w}), T_{\chi^2}(\yobs, \mathbf{w})\}$ have rank correlation equal to 1 when computed for all $\mathbf{w}\in \Omega$, such that Gaussian prepivoting yields equivalent inference to that attained using the distribution of $T_{\chi^2}(\yobs, \mathbf{Z})$ under the sharp null.  \textcolor{black}{This demonstrates objective (ii).}

		Suppose instead that, erroneously, a practitioner proceeded with the more typical form of Hotelling's $T$-squared statistic employing a pooled covariance estimator,
		\begin{align*}
			T_{Pool}(\yobs, \mathbf{Z}) = \left(\sqrt{N} \hat{\bm{\tau}}\right)^{\T} \left(\hat{V}_{Pool}\right)^{-1} \left(\sqrt{N} \hat{\bm{\tau}}\right);\\
			\hat{V}_{Pool} = \left(\frac{N}{n_{0}} + \frac{N}{n_{1}} \right)\left(\frac{(n_1-1)\hat{\Sigma}_{y(1)} + (n_0-1)\hat{\Sigma}_{y(0)}}{n_1+n_0-2}\right). \nonumber
		\end{align*}
		For this test statistic, $f_\eta(\mathbf{t}) = \mathbf{t}^\T\eta^{-1}\mathbf{t}$ as before, but $\hat{\xi} = \hat{V}_{Pool}$. In this case, $T_{Pool}(\yobs, \mathbf{Z})$ is not asymptotically sharp-dominant, such that the reference distribution using this statistic and assuming the sharp null may yield invalid inference. Gaussian prepivoting returns the test statistic
		\begin{align*}
		    G_{Pool}(\mathbf{y}(\mathbf{Z}), \mathbf{Z}) = \gamma^{(d)}_{\bm{0},\hat{V}_{\tau\tau}}\{\mathbf{a}: \mathbf{a}^\T\hat{V}_{Pool}^{-1}\mathbf{a} \leq T_{Pool}(\yobs, \mathbf{Z})\}.
		\end{align*}
Importantly, $G_{Pool}(\mathbf{y}(\mathbf{Z}), \mathbf{Z})$ continues to use the Gaussian measure computed with the covariance matrix $\hat{V}_{\tau\tau}$ in forming the suitable transformation, despite the fact that the pooled covariance matrix is used in forming $T_{Pool}(\mathbf{y}(\mathbf{Z}), \mathbf{Z})$. For fixed $\mathbf{Z}$, $G_{Pool}(\mathbf{y}(\mathbf{Z}), \mathbf{w})$ generally will not have perfect rank correlation with $T_{Pool}(\yobs, \mathbf{w})$ when computed over $\mathbf{w}\in\Omega$, such that the two randomization tests assuming the sharp null no longer furnish identical $p$-values. This divergence is necessary: while  $T_{Pool}(\yobs, \mathbf{Z})$ is not asymptotically sharp-dominant, Theorem \ref{thm: gaussian} asserts that $G_{Pool}(\yobs, \mathbf{Z})$ is, such that the reference distribution for $G_{Pool}(\yobs, \mathbf{Z})$ assuming the sharp null yields asymptotically conservative inference for the weak null. Gaussian prepivoting can thus restore asymptotic validity to a test statistic employing improper studentization, illustrating objective (iii).
\end{example}

\begin{example}[Max absolute $t$-statistic]
Consider again multivariate $\sqrt{N}\hat{\bm{\tau}}$ and a completely randomized design, and consider the test statistic
\begin{align*}
    T_{|max|}(\mathbf{y}(\mathbf{Z}), \mathbf{Z}) = \max_{1\leq j \leq d}\frac{\sqrt{N}|\hat{{\tau}}_j|}{\sqrt{\hat{V}_{\tau\tau, jj}}},
\end{align*}
where $\hat{V}_{\tau\tau, jj}$ is the $jj$ element of $\hat{V}_{\tau\tau}$. For this statistic, $f_{\bm{\eta}}(\mathbf{t}) = \max_{1\leq j\leq d} |t_j|/\eta_j$, and $\hat{\bm{\xi}} = (\hat{V}^{1/2}_{\tau\tau, 11},...,\hat{V}^{1/2}_{\tau\tau, dd})^\T$. For $d\geq 2$,  $T_{|max|}(\mathbf{y}(\mathbf{Z}), \mathbf{Z})$ is not asymptotically sharp-dominant under the weak null: the reference distribution generated under the sharp null depends upon the correlation matrix corresponding to $\tilde{V}$, while the true randomization distribution is governed by the correlations encoded within $V$. The Gaussian prepivoted correction takes the form
\begin{align*}
    G_{|max|}(\mathbf{y}(\mathbf{Z}), \mathbf{Z}) =
    \gamma^{(d)}_{\bm{0},\hat{V}_{\tau\tau}}\left\{\mathbf{a}: \max_{1\leq j \leq d}\; \frac{|a_j|}{\sqrt{\hat{V}_{\tau \tau, jj}}} \leq  \max_{1\leq j \leq d}\; \frac{\sqrt{N}|\hat{{\tau}}_j|}{\sqrt{\hat{V}_{\tau\tau, jj}}}\right\},
\end{align*}
which composes $T_{|max|}(\mathbf{y}(\mathbf{Z}), \mathbf{Z})$ with the distribution function for $\max
\;|A_j|/\sqrt{\hat{V}_{\tau\tau, jj}}$, $j=1,..,d$, when $A$ is multivariate Gaussian with mean zero and covariance $\hat{V}_{\tau\tau}$.  \textcolor{black}{Gaussian prepivoting rectifies the insufficiency of the studentization in $T_{|max|}$, thereby providing an example of objective (iii).}
\end{example}

\begin{example}[Rerandomization] \label{example: rerandomization}
Let $\sqrt{N}\hat{{\tau}}$ be univariate and suppose we now consider a rerandomized design with balance criterion $\phi$ satisfying Condition \ref{cond: phi}. Consider the absolute  difference in means, $f_{\hat{\xi}}(\sqrt{N}\hat{{\tau}}) = \sqrt{N}|\hat{{\tau}}|$, such that $\hat{\xi} = 1$. Gaussian prepivoting yields the test statistic
\begin{align*}
 G_{Re}(\mathbf{y}(\mathbf{Z}), \mathbf{Z}) &= \frac{\gamma^{(1+k)}_{\bm{0}, \hat{V}}\left\{(\mathbf{a}, \mathbf{b})^\T: |a| \leq \sqrt{N}|\hat{{\tau}}|\; \wedge\; \phi(\mathbf{b})=1\right \}}{\gamma^{(k)}_{\bm{0}, \hat{V}_{\delta \delta}}\left\{\mathbf{b}: \phi(\mathbf{b})=1\right\}}\end{align*}
\end{example}

For completely randomized designs with $\phi(\cdot)=1$ deterministically, Gaussian prepivoting is equivalent to studentizing as described in Example 1. In general rerandomized designs however, observe that the transformation depends upon the particular form of the balance criterion $\phi$, and that the reference distribution will depend upon the relationship between the potential outcomes and the covariates used in the balance criterion. As a result, it will generally not be the case that the reference distribution of $G_{Re}(\mathbf{y}(\mathbf{Z}), \mathbf{Z})$ under the sharp null yields equivalent inference to that attained using $\sqrt{N}|\hat{{\tau}}|/\sqrt{\hat{V}_{\tau\tau}}$. This suggests that in rerandomized designs, studentization alone is insufficient for attaining an asymptotically sharp-dominant test statistic. In \S \ref{subsec: studentization in rerandomization}, we show this through an example in the case of Mahalanobis rerandomization.  Lemmas A15 and A16 of \citet{asymptoticsOfRerand} show that under our conditions, probability limits for estimators $\hat{V}$ derived under complete randomization are generally preserved under rerandomized designs. Once again, Theorem \ref{thm: gaussian} ensures that  $G_{Re}(\mathbf{y}(\mathbf{Z}), \mathbf{Z})$ will be asymptotically sharp-dominant, such that the randomization distribution assuming the sharp null may be employed for inference for the weak null.  \textcolor{black}{The development of a finite sample exact method for testing $H_{F}$ which is asymptotically valid for testing $H_{N}$ in rerandomized designs is novel, but its construction is extremely simple within the framework of Gaussian prepivoting; this highlights Gaussian prepivoting's portability to designs outside of just completely randomized experiments.  In the appendix we provide two more examples of this portability: one for matched-pair designs and one for experiments with any finite number of treatment arms.  This highlights objective (iv).}

For the interested reader, in the appendix we include this same collection of examples written directly in the form of Gaussian integrals, and we include verification of Conditions~\ref{cond: f}-\ref{cond: var}.

\section{Gaussian comparison, stochastic dominance, and the probability integral transform}\label{sec: illustration}
\subsection{Gaussian comparison and  Anderson's Theorem}\label{sec:anderson}
We now highlight the essential technical ingredients underpinning the success of Gaussian prepivoting. Consider two mean-zero multivariate Gaussian vectors $(\mathbf{A}_1, \mathbf{B}_1)^\T$ and $(\mathbf{A}_2,\mathbf{B}_2)^\T$, with covariances
\begin{align*} M_1 = \begin{pmatrix}  \Lambda^{(1)}_{aa} & \Lambda_{ab}\\ \Lambda_{ba} & \Lambda_{bb}\end{pmatrix};\;\;\; M_2 = \begin{pmatrix}  \Lambda^{(2)}_{aa} & \Lambda_{ab}\\ \Lambda_{ba} & \Lambda_{bb},\end{pmatrix},\end{align*}
satisfying $\Lambda^{(2)}_{aa} - \Lambda^{(1)}_{aa} \succeq 0$ and $\Lambda_{bb} \succ 0$; the inequalities are stated with respect to the Loewner partial order on positive semidefinite matrices. Let the dimensions of $\mathbf{A}_j$ and $\mathbf{B}_j$ be $d$ and $k$ respectively for $j=1,2$. Compare the tail probabilities for
\begin{align*} f(\mathbf{A}_1) \mid  \phi\left(\mathbf{B}_1\right) = 1\;\; \text{and}\;\; f(\mathbf{A}_2) \mid  \phi\left(\mathbf{B}_2\right) = 1, \end{align*} where $\phi$ and $f$ satisfy Conditions \ref{cond: phi} and Condition \ref{cond: f} respectively. The following result is a straightforward corollary of \citeauthor{conservativeCovarianceDominance}'s (\citeyear{conservativeCovarianceDominance}) theorem for multivariate Gaussians; see also Theorem 4.2.5 of \citet{mvtNormal_Tong}.
\begin{lemma}\label{prop:anderson}
			Under the stated conditions, for any scalar $v$,
			\begin{align*}
			\mathbb{P}\left\{f(\mathbf{A}_1)\geq v \mid \phi(\mathbf{B}_1)=1\right\}\leq \mathbb{P}\left\{f(\mathbf{A}_2)\geq v\mid \phi(\mathbf{B}_2)=1\right\}.
			\end{align*}
		\end{lemma}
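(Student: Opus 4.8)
The plan is to lift the comparison from $\mathbb{R}^d$ to the joint space $\mathbb{R}^{d+k}$ and apply the covariance-ordering corollary of Anderson's theorem there. First I would make two elementary reductions. Because $\mathbf{B}_1$ and $\mathbf{B}_2$ are mean-zero Gaussian with the \emph{same} marginal covariance $\Lambda_{bb}$, we have $\mathbb{P}(\phi(\mathbf{B}_1)=1) = \mathbb{P}(\phi(\mathbf{B}_2)=1) = \gamma^{(k)}_{\bm 0,\Lambda_{bb}}(M)$, and this is strictly positive since $M$ has nonempty interior (Condition~\ref{cond: phi}) and $\Lambda_{bb}\succ 0$. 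Hence it suffices to compare the joint probabilities $\mathbb{P}\{f(\mathbf{A}_j) < v,\ \phi(\mathbf{B}_j)=1\}$; passing to complements within the conditioning event, the inequality claimed for $\{f(\mathbf{A}_j)\geq v\}$ is equivalent to
\[
\mathbb{P}\{f(\mathbf{A}_1) < v,\ \phi(\mathbf{B}_1)=1\} \;\geq\; \mathbb{P}\{f(\mathbf{A}_2) < v,\ \phi(\mathbf{B}_2)=1\}.
\]

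Next I would set $C = \{\mathbf{a}\in\mathbb{R}^d : f(\mathbf{a}) < v\}$, which by Condition~\ref{cond: f} (quasi-convexity and evenness of $f$) is a convex, origin-symmetric, open set. Then $K := (C\times\mathbb{R}^k)\cap(\mathbb{R}^d\times M)$ is the intersection of two convex, origin-symmetric subsets of $\mathbb{R}^{d+k}$, hence is itself convex and origin-symmetric, and the displayed inequality is exactly $\gamma^{(d+k)}_{\bm 0,M_1}(K)\geq\gamma^{(d+k)}_{\bm 0,M_2}(K)$. Since $M_1$ and $M_2$ agree outside the $aa$-block,
\[
M_2 - M_1 \;=\; \begin{pmatrix}\Lambda^{(2)}_{aa}-\Lambda^{(1)}_{aa} & 0\\[2pt] 0 & 0\end{pmatrix}\;\succeq\;0,
\]
so $N(\bm 0,M_2)$ dominates $N(\bm 0, M_1)$ in the Loewner order on $\mathbb{R}^{d+k}$. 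The covariance-dominance form of Anderson's theorem \citep{conservativeCovarianceDominance} (see also Theorem~4.2.5 of \citet{mvtNormal_Tong}) then gives $\gamma^{(d+k)}_{\bm 0,M_2}(K)\leq\gamma^{(d+k)}_{\bm 0,M_1}(K)$ for every origin-symmetric convex $K$, which is what we need; dividing by $\gamma^{(k)}_{\bm 0,\Lambda_{bb}}(M)$ and complementing recovers the statement of the lemma.

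The one point requiring care is degeneracy: neither $M_1$ nor $M_2$ need be positive definite (the $aa$-block of the estimated covariance can be rank deficient in applications), so the comparison theorem must be invoked in a form that allows singular Gaussians, or recovered by perturbing $M_j$ to $M_j+\varepsilon I_{d+k}$ --- which leaves $M_2-M_1\succeq 0$ intact --- and letting $\varepsilon\downarrow 0$, using that a convex set has boundary of Lebesgue measure zero so that $\gamma^{(d+k)}_{\bm 0,M_j+\varepsilon I}(K)\to\gamma^{(d+k)}_{\bm 0,M_j}(K)$. Everything else is bookkeeping; the substance is carried entirely by the cited Gaussian comparison inequality, and the only genuinely modeling-specific step is the observation that slicing by the convex balance region $\mathbb{R}^d\times M$ preserves exactly the symmetric-convexity needed to apply it.
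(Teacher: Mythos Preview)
Your proof is correct and follows essentially the same route as the paper's: form the convex, origin-symmetric set $\{(\mathbf{a},\mathbf{b}): f(\mathbf{a})\leq v,\ \phi(\mathbf{b})=1\}$ in $\mathbb{R}^{d+k}$, note that the conditioning probabilities coincide because the $\mathbf{B}$-marginals share the covariance $\Lambda_{bb}$, and apply the covariance-dominance form of Anderson's theorem using $M_2-M_1\succeq 0$. Your treatment is in fact more careful than the paper's one-paragraph sketch, in particular your remark about handling possible singularity of $M_1,M_2$ by an $\varepsilon I$ perturbation, which the paper does not address.
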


The result follows immediately from Anderson's theorem after noting that the set $\mathcal{B}_v = \{(\mathbf{a}, \mathbf{b})^\T: f(\mathbf{a}) \leq v\; \wedge\; \phi(\mathbf{b})=1\}$ is convex and mirror-symmetric for any $v$. This can be seen through our assumption that $f(\cdot)$ is quasi-convex and mirror-symmetric, such that its sublevel sets are convex and mirror symmetric. We further have that $\mathbb{P}(\phi(\mathbf{B}_1)=1) = \mathbb{P}(\phi(\mathbf{B}_2)=1) > 0$ given the structure of the covariance matrices $M_1$ and $M_2$ and Condition \ref{cond: phi}, completing the proof.

 \subsection{Stochastic dominance and the probability integral transform}
For two real valued random variables $S$ and $T$,  $S$ \textit{(first order) stochastically dominates} $T$ if $F_{S}(a) \leq F_{T}(a)$ for all $a \in \mathbb{R}$, where $F_{S}$ and $F_T$ are the distribution functions of $S$ and $T$ respectively.

Suppose now that  $S$ and $T$ are continuous and that $S$ stochastically dominates $T$. By the probability integral transform, the distribution of $F_T(T)$ would be standard uniform. The following proposition considers transforming the random variable $T$ not by its own distribution function, but rather by the distribution function of $S$, its stochastically dominating random variable.
\begin{lemma}\label{prop:unif}
Suppose that $S$, $T$ are continuous random variables and that $S$ stochastically dominates $T$.  Then, $F_S(T)$ is stochastically dominated by a standard uniform random variable.
\end{lemma}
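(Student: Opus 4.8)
The plan is to obtain the conclusion from a monotone coupling of $S$ and $T$ together with the probability integral transform applied to $S$. First I would realize $S$ and $T$ on a common probability space with the correct marginals and with $S \ge T$ almost surely: let $V$ be uniform on $(0,1)$ and set $S' = F_S^{-1}(V)$ and $T' = F_T^{-1}(V)$, where $F_S^{-1}$ and $F_T^{-1}$ denote the left-continuous generalized inverses. Then $S' \overset{d}{=} S$ and $T' \overset{d}{=} T$, and since $S$ stochastically dominates $T$ we have $F_S(a) \le F_T(a)$ for every $a$, which is equivalent to the reversed inequality $F_S^{-1}(v) \ge F_T^{-1}(v)$ for every $v \in (0,1)$; hence $S' \ge T'$ almost surely.

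Second, since $F_S$ is nondecreasing, $S' \ge T'$ forces $F_S(S') \ge F_S(T')$ almost surely, so for every $t$ one has $\mathbb{P}\{F_S(T') \le t\} \ge \mathbb{P}\{F_S(S') \le t\}$. Because $S$ is continuous, the probability integral transform makes $F_S(S') \overset{d}{=} F_S(S)$ a standard uniform random variable, so the right-hand side equals $t$ for $t \in [0,1]$ (and equals $0$ for $t < 0$ and $1$ for $t > 1$). Since $F_S(T') \overset{d}{=} F_S(T)$, this shows that the distribution function of $F_S(T)$ lies pointwise at or above that of a standard uniform, which is exactly the assertion that $F_S(T)$ is stochastically dominated by a standard uniform.

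There is no deep obstacle here; the points requiring care are purely bookkeeping. One must invoke the standard equivalence between $F_S \le F_T$ and the reversed inequality of the quantile functions, keep the direction of the stochastic-dominance inequality straight (it is $F_S(T)$, not $F_T(S)$, that gets dominated), and dispose of the trivial endpoint cases $t \in \{0,1\}$ and the behavior outside $[0,1]$. I would also remark that only the continuity of $S$ is actually used, since that is what upgrades $F_S(S)$ to an exact uniform; the continuity of $T$ is inessential. If a coupling-free argument is preferred, the same bound follows directly: with $a_t = \inf\{x : F_S(x) \ge t\}$, continuity of $F_S$ gives $\{F_S(T) < t\} = \{T < a_t\}$ and $\lim_{x \uparrow a_t} F_S(x) = t$, whence $\mathbb{P}\{F_S(T) \le t\} \ge \mathbb{P}\{T < a_t\} \ge \mathbb{P}\{S < a_t\} = t$ by stochastic dominance.
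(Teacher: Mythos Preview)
Your proposal is correct. The paper's proof is the one-liner
\[
\mathbb{P}\{F_S(T) \le t\} = \mathbb{P}\{T \le F_S^{-1}(t)\} \ge \mathbb{P}\{S \le F_S^{-1}(t)\} = t,
\]
which is exactly your coupling-free alternative at the end (indeed your version is slightly more careful, working with strict inequalities and limits to handle possible flat stretches of $F_S$). Your primary argument via the monotone quantile coupling $S' = F_S^{-1}(V)$, $T' = F_T^{-1}(V)$ is a genuinely different route: rather than comparing tail probabilities directly, you realize the dominance pointwise as $S' \ge T'$, push it through the monotone map $F_S$, and then read off the uniform law of $F_S(S') = V$. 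This buys a more constructive picture and makes your observation that only the continuity of $S$ is actually used completely transparent, at the cost of invoking the quantile-coupling machinery where the paper gets by with a single inequality. Both arguments are equally short once the standard facts are in hand; the paper simply chose the most compressed version.
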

\begin{proof}
For any $t\in [0,1]$, $\mathbb{P}\{F_S(T) \leq t\} = \mathbb{P}\{T \leq F_S^{-1}(t)\}\geq \mathbb{P}\{S \leq F_S^{-1}(t)\} = t.$\end{proof}

In the setup of \S \ref{sec:anderson}, under Conditions \ref{cond: phi} and \ref{cond: f} we have by Proposition \ref{prop:anderson} that $f(\mathbf{A}_2)\mid \phi(\mathbf{B}_2)=1$ stochastically dominates $f(\mathbf{A}_1)\mid \phi(\mathbf{B}_1)=1$. Consequently, composing $f(\mathbf{A}_1)\mid \phi(\mathbf{B}_1)=1$ with the distribution function of $f(\mathbf{A}_2)\mid \phi(\mathbf{B}_2)=1$ would yield a random variable that is stochastically dominated by a standard uniform.

\subsection{A proof sketch for Theorem \ref{thm: gaussian}}\label{sec: sketch}
While a formal proof of Theorem \ref{thm: gaussian} is deferred to the appendix, here we provide an informal sketch in light of Lemmas \ref{prop:anderson} and \ref{prop:unif}. Under Assumptions \ref{asm: non-degen sampling limit} - \ref{asm: bounded fourth moment} and Condition \ref{cond: phi}, $\sqrt{N}(\hat{\bm{\tau}} - \bar{\bm{\tau}})$ converges in distribution to $\mathbf{A}_1\mid \phi(\mathbf{B_1}) = 1$, where  $(\mathbf{A}_1, \mathbf{B}_1)^\T$ are jointly multivariate normal with covariance $V$. Recall that $T(\yobs, \mathbf{Z}) = f_{\hat{\xi}}(\sqrt{N}\hat{\bm{\tau}})$ for some $f_{\eta}$ satisfying Condition \ref{cond: f} for all $\eta\in \Xi$, some $\hat{\xi}$ satisfying Condition \ref{cond: xi}, and with a balance criterion $\phi$ satisfying Condition \ref{cond: phi}. By Condition \ref{cond: xi} and the assumption of the weak null, we have that $\hat{\xi}(\yobs, \bfZ)$ converges in probability to $\xi$. Therefore, under the weak null, by Lemma \ref{prop:anderson} the limiting distribution of $T(\yobs, \mathbf{Z})$ would be stochastically dominated by that of $f_\xi(\mathbf{A}_2) \mid \phi(\mathbf{B}_2)=1$ for any $(\mathbf{A}_2,\mathbf{B}_2)^\T$ multivariate Gaussian with covariance matrix
\begin{align*}
\bar{\bar{V}} &= V + \begin{pmatrix}\Delta & 0_{d,k}\\  0_{k,d} & 0_{k,k} \end{pmatrix}
\end{align*}
with $\Delta\succeq 0$. The transformation
\begin{align*}
\bar{\bar{G}}(\mathbf{y}(\mathbf{Z}), \mathbf{Z}) = \frac{\gamma^{(d+k)}_{\bm{0}, \bar{\bar{V}}}\left\{(\mathbf{a}, \mathbf{b})^\T: {f}_{\hat{\xi}}(a) \leq f_{{\hat{\xi}}}(\sqrt{N}\hat{\bm{\tau}})\; \wedge\; \phi(\mathbf{b})=1\right \}}{\gamma^{(k)}_{\bm{0}, \bar{\bar{V}}_{\delta \delta}}\left\{\mathbf{b}: \phi(\mathbf{b})=1\right\}}
\end{align*}
transforms $T(\yobs, \mathbf{Z})$ by the distribution function of a random variable which stochastically dominates its limiting distribution. By Lemma \ref{prop:unif} and the continuous mapping theorem, asymptotically $\bar{\bar{G}}(\mathbf{y}(\mathbf{Z}), \mathbf{Z})$ is stochastically dominated by a standard uniform. By Condition \ref{cond: var}, the covariance estimator $\hat{V}$ used in forming ${G}(\mathbf{y}(\mathbf{Z}), \mathbf{Z})$ has a probability limit of the required form for stochastic dominance. Therefore, another application of the continuous mapping theorem yields that  ${G}(\mathbf{y}(\mathbf{Z}), \mathbf{Z}) - \bar{\bar{G}}(\mathbf{y}(\mathbf{Z}), \mathbf{Z}) = o_p(1)$, such that by Slutsky's Theorem ${G}(\mathbf{y}(\mathbf{Z}), \mathbf{Z})$ is itself stochastically dominated by a standard uniform.

Meanwhile, Proposition \ref{prop:permute} and Condition \ref{cond: phi} yield that under the weak null the distribution of $\sqrt{N}\hat{\tau}(\yobs, \mathbf{W}) \mid \bfZ$ converges weakly in probability to the distribution of $\tilde{\mathbf{A}}\mid \phi(\tilde{\mathbf{B}}) = 1$, where $(\tilde{\mathbf{A}}, \tilde{\mathbf{B}})^\T$ are jointly multivariate Gaussian with mean zero and covariance $\tilde{V}$. The distribution of $f_{\hat{\xi}(\yobs, \mathbf{W})}\{\sqrt{N}\hat{\tau}({\yobs}, \mathbf{W})\} \mid \bfZ$ is precisely $\hatPerm_T$, the reference distribution assuming the sharp null holds for the test statistic $T(\yobs, \bfZ) = f_{\hat{\xi}}(\sqrt{N}\hat{\bm{\tau}})$. By Condition \ref{cond: var}, $\hat{V}(\yobs, \mathbf{W})$ converges in probability to $\tilde{V}$ itself. Further, by Condition \ref{cond: xi} $\hat{\xi}(\yobs, \mathbf{W})$ converges in probability to $\tilde{\xi}$. Applying the continuous mapping theorem and Slutsky's Theorem for randomization distributions \citep[Lemmas A5-A6]{chungRomano16}, one sees that Gaussian prepivoting furnishes a transformation that amounts to, asymptotically, an application of the probability integral transform. As a result, $\hatPerm_G(t)$ converges in probability to $t$, the distribution function of the standard uniform, for all $t\in [0,1]$.

\section{Extensions to asymptotically linear estimators}\label{sec: linear}

		Theorem \ref{thm: gaussian} may be extended to estimators other than the difference in means. Consider an estimator $\breve{\tau}(\mathbf{y}(\mathbf{Z}), \mathbf{Z})$ such that 
		\begin{align*}
		   \sqrt{N}\{\breve{\tau}(\mathbf{y}(\mathbf{Z}), \mathbf{Z}) - \bar{\bm{\tau}}\} &= \sqrt{N}\left(\frac{1}{n_1}\sum_{i=1}^NZ_i\mathbf{r}_i(Z_i) - \frac{1}{n_0}\sum_{i=1}^N(1-Z_i)\mathbf{r}_i(Z_i)\right) + o_p(1)
		\end{align*} for some constants $\{\mathbf{r}_i(0), \mathbf{r}_i(1)\}_{i=1}^N$ which may change with $N$ and that satisfy $(1/N)\sum_{i=1}^N(\mathbf{r}_i(1)-\mathbf{r}_i(0)) = 0$ along with Assumptions \ref{asm: means and covs stabilize} and \ref{asm: bounded fourth moment}. Suppose further that $\breve{\tau}(\tilde{\mathbf{y}}(\mathbf{Z}), \mathbf{W})$, $\mathbf{W}$ independent from $\mathbf{Z}$ and drawn uniformly from $\Omega$, satisfies 
			\begin{align*}
		   \sqrt{N}\breve{\tau}(\tilde{\mathbf{y}}(\mathbf{Z}), \mathbf{W}) &= \sqrt{N}\left(\frac{1}{n_1}\sum_{i=1}^NW_i\tilde{\mathbf{r}}_i(Z_i) - \frac{1}{n_0}\sum_{i=1}^N(1-W_i)\tilde{\mathbf{r}}_i(Z_i)\right) + o_p(1)
		\end{align*}for potentially distinct constants $\{\tilde{\mathbf{r}}_i(0), \tilde{\mathbf{r}}_i(1)\}_{i=1}^N$ which may change with $N$ that satisfy $(1/N)\sum_{i=1}^N(\tilde{\mathbf{r}}_i(1)-\tilde{\mathbf{r}}_i(0)) = 0$ along with Assumptions \ref{asm: means and covs stabilize} and \ref{asm: bounded fourth moment}. Observe that the difference in means estimator satisfies these conditions with $\mathbf{r}_i(z) = \tilde{\mathbf{r}}_i(z) = \mathbf{y}_i(z) - z\bar{\bm{\tau}}$ for $z\in\{0,1\}$. Let $\bm{\tau}_{r i} = \mathbf{r}_i(1) - \mathbf{r}_i(0)$. Let $\Sigma_{r(z)}, \Sigma_{\tau_r}, \Sigma_{r(z)x}, \Sigma_{\tau_r x}$ be the analogues of $\Sigma_{y(z)}$, $\Sigma_{\tau}$, $\Sigma_{y(z)x}$ and $\Sigma_{\tau x}$ for $z\in\{0,1\}$, and let the same hold with $r$ replaced by $\tilde{r}$. Define $V^{(r)}$ and $\tilde{V}^{(\tilde{r})}$ as the analogues of $V$ and $\tilde{V}$, computed now based upon $\mathbf{r}(z)$ and $\tilde{\mathbf{r}}(z)$ instead of $\mathbf{y}(z)$ and $\tilde{\mathbf{y}}(z)$ for $z\in\{0,1\}$.
      Consider a test statistic for the weak null of the form $\breve{T}(\yobs, \bfZ) = f_{\hat{\xi}}(\sqrt{N}\breve{\tau})$ for some $f_\eta$ satisfying Condition \ref{cond: f} and $\hat{\xi}$ satisfying Condition \ref{cond: xi}, and suppose that there exists a covariance estimator $\breve{V}$ satisfying Condition \ref{cond: var} with $V$ and $\tilde{V}$ replaced by $V^{(r)}$ and $\tilde{V}^{(r)}$.  The Gaussian prepivoted test statistic is
\begin{align*}
\breve{G}(\mathbf{y}(\mathbf{Z}), \mathbf{Z}) = \frac{\gamma^{(d+k)}_{\bm{0}, \breve{V}}\left\{(\mathbf{a}, \mathbf{b})^\T: f_{\hat{\xi}}(\mathbf{a}) \leq \breve{T}(\yobs, \mathbf{Z})\; \wedge\; \phi(\mathbf{b})=1\right \}}{\gamma^{(k)}_{\bm{0}, \breve{V}_{\delta \delta}}\left\{\mathbf{b}:  \phi(\mathbf{b})=1\right\}}
\end{align*}
\begin{theorem}\label{thm: brevegaussian}
Suppose that Neyman's null, $H_N: \bar{\bm{\tau}} = 0$, holds. Then, under the described restrictions on $\breve{T}(\yobs, \mathbf{Z})$ and $\breve{V}$ and under Assumption \ref{asm: non-degen sampling limit} and with Assumptions \ref{asm: means and covs stabilize} and \ref{asm: bounded fourth moment} applied to $\mathbf{r}_i(z)$, $z=\{0,1\}$, $\breve{G}(\mathbf{y}(\mathbf{Z}), \mathbf{Z})$ converges in distribution to a random variable $\breve{U}$ taking values in $[0,1]$ satisfying
\begin{align*}
    \mathbb{P}(\breve{U} \leq t) \geq t,
\end{align*}
for all $t \in [0,1]$. Furthermore, the distribution $\hatPerm_{\Breve{G}}(t)$ satisfies $\hatPerm_{\breve{G}}(t)\overset{p}{\rightarrow} t$ for all $t\in [0,1]$.\looseness=-1
\end{theorem}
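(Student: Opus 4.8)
The plan is to reduce Theorem~\ref{thm: brevegaussian} to the argument already carried out for Theorem~\ref{thm: gaussian} in \S\ref{sec: illustration}: use the two asymptotic-linearity expansions to replace $\breve{\tau}$ by an ordinary difference in means of the surrogate potential outcomes $\{\mathbf{r}_i(z)\}$ (respectively $\{\tilde{\mathbf{r}}_i(z)\}$ for the reference distribution) up to an $o_p(1)$ remainder, and then run the Anderson-comparison and probability-integral-transform machinery of Lemmas~\ref{prop:anderson} and \ref{prop:unif} essentially verbatim with $(\mathbf{y}, \tilde{\mathbf{y}})$ replaced throughout by $(\mathbf{r}, \tilde{\mathbf{r}})$, and with $V, \tilde V$ replaced by $V^{(r)}, \tilde V^{(\tilde r)}$.

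\textbf{The true randomization distribution.} Under $H_N$ we have $\bar{\bm{\tau}} = 0$, so $\tilde{\mathbf{y}}(\mathbf{Z}) = \yobs$ and $\sqrt{N}\breve{\tau}(\yobs, \mathbf{Z}) = \sqrt{N}\hat{\tau}(\mathbf{r}(\mathbf{Z}), \mathbf{Z}) + o_p(1)$, whose leading term is a difference in means of the fixed surrogate outcomes with $(1/N)\sum_i(\mathbf{r}_i(1) - \mathbf{r}_i(0)) = 0$. Since $\mathbf{r}$ satisfies Assumptions~\ref{asm: means and covs stabilize}--\ref{asm: bounded fourth moment} and $n_1/N \to p$, the finite-population central limit theorem of \S\ref{sec: useful} gives that $\{\sqrt{N}\hat{\tau}(\mathbf{r}(\mathbf{Z}), \mathbf{Z}), \sqrt{N}\hat{\delta}(\mathbf{x}, \mathbf{Z})\}^\T$ converges to a mean-zero Gaussian with covariance $V^{(r)}$ (the vanishing surrogate average effect removing the centering). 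Conditioning on $\phi(\sqrt{N}\hat{\bm{\delta}}) = 1$ produces, in the limit, $\mathbf{A}_1 \mid \phi(\mathbf{B}_1) = 1$ with $(\mathbf{A}_1, \mathbf{B}_1)^\T$ Gaussian of covariance $V^{(r)}$ (for a completely randomized design $\phi \equiv 1$). Slutsky's theorem absorbs the $o_p(1)$, and Condition~\ref{cond: xi} gives $\hat{\xi} \overset{p}{\rightarrow} \xi \in \Xi$, so joint continuity of $f_\eta(\mathbf{t})$ (Condition~\ref{cond: f}) and the continuous mapping theorem yield that $\breve{T}(\yobs, \mathbf{Z}) = f_{\hat{\xi}}(\sqrt{N}\breve{\tau})$ converges in distribution to $f_\xi(\mathbf{A}_1) \mid \phi(\mathbf{B}_1) = 1$. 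By Condition~\ref{cond: var} applied to $\mathbf{r}$, $\breve{V} \overset{p}{\rightarrow} \bar{\bar V}{}^{(r)}$, the matrix $V^{(r)}$ with some $\Delta \succeq 0$ added to its upper-left block. Lemma~\ref{prop:anderson} shows $f_\xi(\mathbf{A}_1) \mid \phi(\mathbf{B}_1) = 1$ is stochastically dominated by $f_\xi(\mathbf{A}_2) \mid \phi(\mathbf{B}_2) = 1$ with $(\mathbf{A}_2, \mathbf{B}_2)^\T$ Gaussian of covariance $\bar{\bar V}{}^{(r)}$, whose distribution function is precisely the limiting map used to form $\breve{G}$. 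Composing the limit of $\breve{T}$ with this dominating distribution function, applying Lemma~\ref{prop:unif}, and performing one more continuous-mapping/Slutsky step to exchange $(\breve{V}, \hat{\xi})$ for $(\bar{\bar V}{}^{(r)}, \xi)$ shows $\breve{G}(\yobs, \mathbf{Z})$ converges in distribution to a $[0,1]$-valued $\breve{U}$ with $\mathbb{P}(\breve{U} \leq t) \geq t$ for all $t$.

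\textbf{The reference distribution.} For $\hatPerm_{\breve{G}}$ we use the second expansion, $\sqrt{N}\breve{\tau}(\tilde{\mathbf{y}}(\mathbf{Z}), \mathbf{W}) = \sqrt{N}\hat{\tau}(\tilde{\mathbf{r}}(\mathbf{Z}), \mathbf{W}) + o_p(1)$ with $\mathbf{W}$ uniform on $\Omega$ independent of $\mathbf{Z}$ and $(1/N)\sum_i(\tilde{\mathbf{r}}_i(1) - \tilde{\mathbf{r}}_i(0)) = 0$. The analogue of Proposition~\ref{prop:permute} applied to $\tilde{\mathbf{r}}$ shows the conditional law of $\{\sqrt{N}\hat{\tau}(\tilde{\mathbf{r}}(\mathbf{Z}), \mathbf{W}), \sqrt{N}\hat{\delta}(\mathbf{x}, \mathbf{W})\}^\T \mid \mathbf{Z}$ converges weakly in probability to a mean-zero Gaussian with covariance $\tilde V^{(\tilde r)}$, and Conditions~\ref{cond: xi}--\ref{cond: var} give $\hat{\xi}(\yobs, \mathbf{W}) \overset{p}{\rightarrow} \tilde{\xi}$ and $\breve{V}(\yobs, \mathbf{W}) \overset{p}{\rightarrow} \tilde V^{(\tilde r)}$ in the corresponding randomization sense. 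Hence $\hatPerm_{\breve{G}}$ is, up to a vanishing remainder, the law of $f_{\tilde{\xi}}(\tilde{\mathbf{A}}) \mid \phi(\tilde{\mathbf{B}}) = 1$ composed with \emph{its own} distribution function, i.e.\ asymptotically an exact probability integral transform; by the continuous mapping theorem and Slutsky's theorem for randomization distributions \citep[Lemmas A5--A6]{chungRomano16}, $\hatPerm_{\breve{G}}(t) \overset{p}{\rightarrow} t$ for every $t \in [0,1]$.

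\textbf{Main obstacle.} The delicate part is bookkeeping the $o_p(1)$ remainders, which live in two different stochastic regimes --- ordinary convergence in probability over $\mathbf{Z} \sim \mathrm{Unif}(\Omega)$ for the true distribution, and convergence ``weakly in probability'' of the conditional law given $\mathbf{Z}$ for the reference distribution --- while simultaneously absorbing the estimation errors $\breve{V} - \bar{\bar V}{}^{(r)}$ and $\hat{\xi} - \xi$ (and their reference-distribution analogues) by Slutsky-type arguments, even though $\breve{G}$ depends on a random covariance and a random $\hat{\xi}$ appearing inside a Gaussian-measure integral. This is exactly where joint continuity in Condition~\ref{cond: f}, the closedness/convexity/mirror-symmetry of $M$ in Condition~\ref{cond: phi} (which keeps the denominator $\gamma^{(k)}_{\bm{0}, \breve{V}_{\delta\delta}}\{\mathbf{b}: \phi(\mathbf{b}) = 1\}$ bounded away from $0$), and continuity of the Gaussian measure in its covariance parameter are needed; modulo these continuity inputs the argument is structurally identical to the proof of Theorem~\ref{thm: gaussian}.
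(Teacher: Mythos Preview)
Your proposal is correct and follows essentially the same route as the paper: treat the surrogate constants $\{\mathbf{r}_i(z)\}$ and $\{\tilde{\mathbf{r}}_i(z)\}$ as potential outcomes satisfying the regularity assumptions, note that the $o_p(1)$ remainders do not affect the limiting distributions, and then rerun the proof of Theorem~\ref{thm: gaussian} with $V^{(r)}$ and $\tilde V^{(\tilde r)}$ in place of $V$ and $\tilde V$. In fact your write-up is considerably more detailed than the paper's own two-sentence appendix proof, and your ``Main obstacle'' paragraph correctly anticipates the continuity lemmas (continuity of Gaussian measure in its covariance parameter, joint continuity of the prepivoting map) that the paper isolates separately to make the Slutsky/continuous-mapping steps go through.
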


In the appendix, we illustrate that the regression-adjusted average treatment effect estimator and its corresponding estimated variance presented in \citet{agnosticRegAdj} can be viewed in this form. As a result, Theorem \ref{thm: brevegaussian} provides justification for the use of the prepivoted randomization distribution of a regression-adjusted estimator.

\section{Simulation studies}\label{sec: simulations}
\subsection{Studentization and prepivoting in rerandomized designs}\label{subsec: studentization in rerandomization}
In the $b$th of $B$ iterations, we draw, for $i=1,...,N$, covariates $iid$ as
\begin{align*} \mathbf{x}_i \overset{iid}{\sim} \mathcal{N}\left(0, \begin{pmatrix}1.0&0.8&0.2\\ 0.8&1&0.3\\0.2 & 0.3 & 1\end{pmatrix}\right). \end{align*}
Given these covariates, we draw $r_i(0)$ and $r_i(1)$ as
\begin{align*}
r_i(0) = \mathbf{x}_i^\T\bm{\beta}_0 + \epsilon_i(0);\;\;
r_i(1) = \mathbf{x}_i^\T\bm{\beta}_1 + \epsilon_i(1),
\end{align*}
where $\beta_0 = -(6.4, -4.0, -2.4)$, $\beta_1 = c(0.2, 0.4, 0.6)^\T$, $\epsilon_i(0)\overset{iid}{\sim}-\mathcal{E}(1)+1$, $\epsilon_i(1)\overset{iid}{\sim}-\mathcal{E}(1/10) + 10$, $\epsilon_i(0)$ independent of $\epsilon_i(1)$, and $\mathcal{E}(\lambda)$ representing an exponential distribution with rate $\lambda$.

We form the potential outcomes under treatment and control in two distinct ways, one in which the sharp null holds and one in which only the weak null holds:
\begin{itemize}
\item[]\textit{Sharp Null}: $y_i(1) = y_i(0) = r_i(1)$
\item[]\textit{Weak Null}: $y_i(1) = r_i(1)$; $y_i(0) = r_i(0) + \bar{r}(1) - \bar{r}(0)$
\end{itemize}
 Of the $N$ individuals, $n_1=0.2N$ receive the treatment and $n_0 = 0.8N$ receive the control. We use a Mahalanobis-based rerandomized design, with criterion $\phi(\sqrt{N}\hat{\bm{\delta}}) = \indicatorFunction{(\sqrt{N}\hat{\bm{\delta}})^\T V_{\delta \delta}^{-1}(\sqrt{N}\hat{\bm{\delta}}) \leq 1 }$. This balance criterion reduces the cardinality of $\Omega$ by roughly 80\% relative to a completely randomized design. For each $b$, we draw a single $\mathbf{Z}\in\Omega$, and proceed with inference using the reference distribution of the following test statistics under the incorrect assumption that the sharp null holds:
\begin{enumerate}
\item Absolute difference in means, unstudentized
\item Absolute difference in means, studentized
\item Gaussian prepivoting the absolute difference in means, studentized
\end{enumerate}
The true reference distributions assuming the sharp null are replaced by Monte-Carlo estimates with $1000$ draws from $\Omega$ for each $b$, and the desired Type I error rate is $\alpha = 0.05$. We also perform inference using the large-sample reference distribution for the absolute studentized difference in means in a rerandomized design; see \citet{asymptoticsOfRerand} for more details. As a covariance estimator $\hat{V}$, we use the conventional unpooled covariance estimator for $(\sqrt{N}\hat{\tau}, \sqrt{N}\hat{\bm{\delta}})^\T$ in a two-sample design. For the generative models reflecting the sharp and weak nulls, we proceed with both $N=50$ and $N=1000$ to compare performance in small and large sample regimes. For each $N$, we conduct $B=5000$ simulations.\looseness=-1

\begin{table}
	\caption{\label{tab: rerand} Inference after rerandomization. The rows describe the simulation settings, which vary between the sharp and weak nulls holding and between small and large sample sizes. The first three columns represent the performance of randomization tests assuming the sharp null hypothesis and using the unstudentized absolute difference in means, absolute studentized difference in means, and Gaussian prepivoted absolute difference in means respectively to perform inference. The last column is a large-sample test which is asymptotically valid for the weak null, based upon \citet{asymptoticsOfRerand}. The desired Type I error rate in all settings is  $\alpha = 0.05$.}
	\centering
	\begin{tabular}{l c c c c c c c}
	&\multicolumn{3}{c}{Randomization Test} 			& Large-Sample\\
									&	No Stu. & 	Stu. 	& 	Pre. 	& 						\\
	Sharp, $N=50$ 	& 	0.053	& 	0.050	&		0.051	& 	0.110			\\ 
	Sharp, $N=1000$ &  	0.052 &		0.048 &		0.048 & 	0.054			\\ 
	Weak, $N=50$ 		&  	0.073 & 	0.114 & 	0.037 & 	0.058 		\\ 
	Weak, $N= 1000$ &  	0.070	& 	0.083 &		0.018 &		0.019			\\ 
	\end{tabular}
\end{table}

\begin{figure}[ht]
	\centering
	\includegraphics[scale=.5]{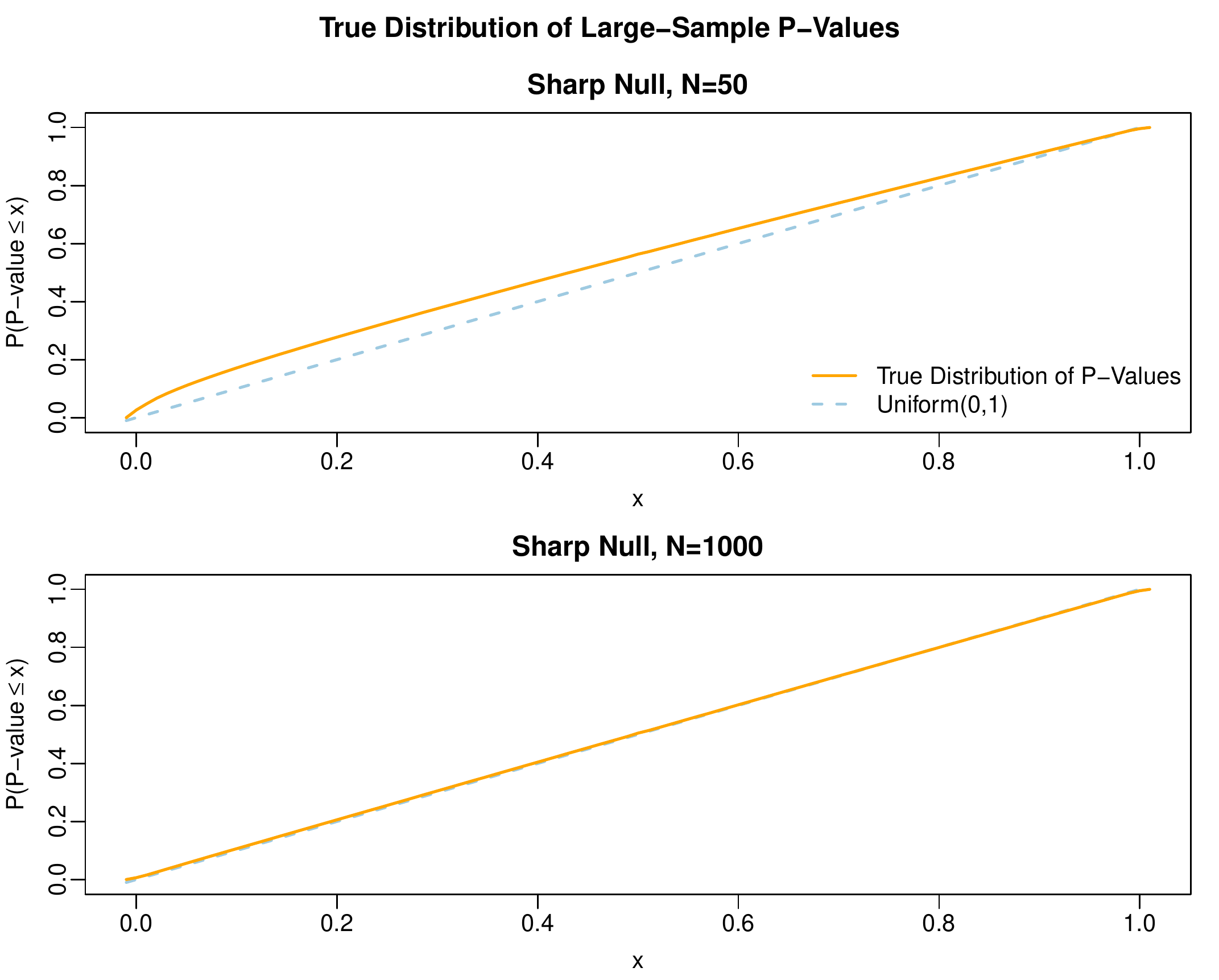}
	\caption{Randomization distribution of the large-sample $p$-values under the sharp null (solid) compared to a standard uniform distribution (dashed) at $N=50$ (top) and $N=1000$ (bottom). At $N=50$, it is more likely to observe a small $P$-value than what the uniform distribution would suggest, yielding the inflated Type I error rate.}
	\label{fig: rerand_ls}
\end{figure}

\begin{figure}[ht]
	\centering
	\includegraphics[scale=.5]{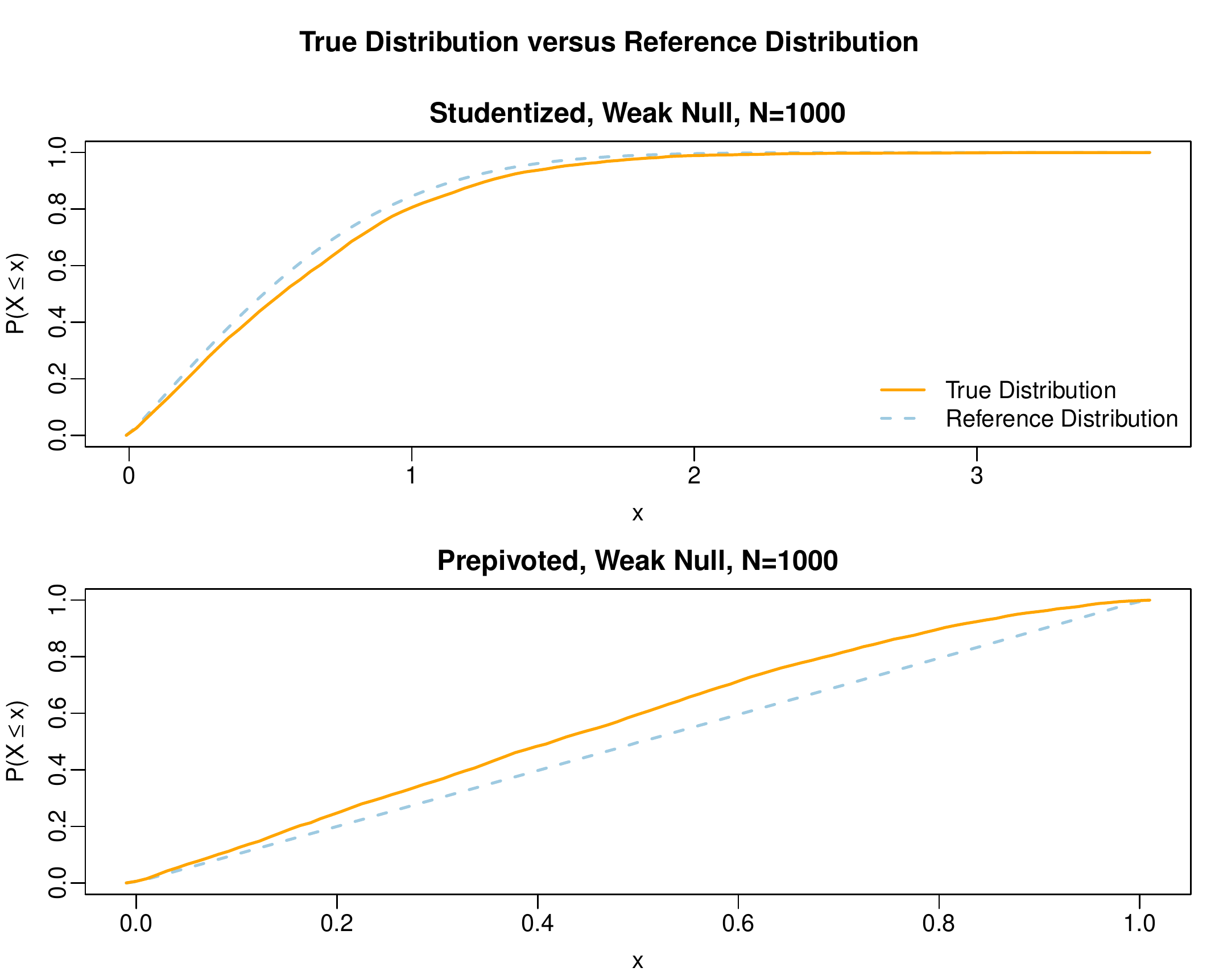}
	\caption{True randomization distribution under the weak null (solid) versus the reference distribution assuming the sharp null (dashed) for the studentized (top) and Gaussian prepivoted (bottom) test statistics with a rerandomized design. To yield valid randomization tests under the weak null, the solid line needs to lie above the dotted line, such that the solid line attributes less mass in the right tail than the dotted line does}
	\label{fig: rerand_stu_pre}
\end{figure}

Table \ref{tab: rerand} contains the results of the simulation study. Under the sharp null with $N=50$, we see the benefits of using a randomization test: the randomization tests based upon the unstudentized, studentized, and prepivoted absolute difference in means all resulted in a Type I error rate of \textcolor{black}{0.05} (up to noise from the Monte-Carlo simulation) as desired. Contrast this with the large-sample test, which had an estimated Type I error rate of \textcolor{black}{0.110} under the sharp null hypothesis. Figure \ref{fig: rerand_ls} explains the deficiency of the large-sample test by comparing the true distribution for the large-sample $p$-values to the standard uniform distribution. As is seen, at $N=50$ small $p$-values are more likely to occur than what the standard uniform would predict at any point $t \in [0,1]$, resulting in inflated Type I error rates. By $N=1000$, the asymptotic approximation performs much better, as the true distribution of $p$-values lies on top of the standard uniform. Gaussian prepivoting uses 1 minus these large-sample $p$-values as the test statistic whose randomization distribution is enumerated, such that the solid line in Figure \ref{fig: rerand_ls} reflects 1 minus the randomization distribution of the Gaussian prepivoted test statistic. As Gaussian prepivoting uses a randomization test under the sharp null, the solid line also reflects the reference distribution employed for performing inference. That these coincide is a consequence of the sharp null holding, such that the randomization tests are exact tests for any sample size.

Under the weak null, we see in Table \ref{tab: rerand} that even at $N=1000$, the unstudentized and studentized randomization tests erroneously assuming the sharp null have inflated Type I error rates. This pattern will persist even asymptotically, as in this simulation setup these test statistics are not asymptotically sharp-dominant. This may come as a surprise, as in completely randomized designs studentizing \textit{does} furnish asymptotic sharp dominance. As evidenced here, the impact of covariates on the limiting distribution in rerandomized experiments invalidates studentization as a mechanism for attaining asymptotic sharp dominance. Figure \ref{fig: rerand_stu_pre} illustrates this in the case of the studentized test statistic. We see in the top panel that the true distribution function for the studentized test statistic lies below that of the reference distribution assuming the sharp null, such that the right-tail probabilities are larger for the true randomization distribution than they are for the reference distribution. This yields anti-conservative inference. We see in the bottom panel of Figure \ref{fig: rerand_stu_pre} that through use of Gaussian prepivoting, asymptotic conservativeness has been restored: the true randomization distribution of the prepivoted test statistic is stochastically dominated by the reference distribution assuming the sharp null, as predicted by Theorem \ref{thm: gaussian}. We further see that the cumulative distribution assuming the sharp null is converging to the distribution function of the standard uniform (a straight line between 0 and 1), again reflecting Theorem \ref{thm: gaussian}. Table \ref{tab: rerand} further shows that the Gaussian prepivoted test and the large-sample test have very similar rejection rates at $N=1000$, reflecting the asymptotic equivalence of the two methods under the weak null. \looseness=-1

\subsection{A comparison of multivariate tests}
In each iteration $b = 1,...,B$, we draw $\{\mathbf{r}_i(1)\}_{i=1}^N$ and $\{\mathbf{r}_i(0)\}_{i=1}^N$ independent from one another and $iid$ from mean zero equicorrelated multivariate normals of dimension $k=25$ with marginal variances one. The correlation coefficients governing $\mathbf{r}_i(1)$ and $\mathbf{r}_i(0)$ are 0 and 0.95 respectively. We will have two simulation settings, one each for the sharp and weak null:
\begin{itemize}\item[]\textit{Sharp Null}: $\mathbf{y}_i(1) = \mathbf{y}_i(0) = \mathbf{r}_i(1)$.
\item[]\textit{Weak Null}: $\mathbf{y}_i(1) = \mathbf{r}_i(1)$; $\mathbf{y}_i(0) = \mathbf{r}_i(0) + \bar{\mathbf{r}}(1) - \bar{\mathbf{r}}(0)$.
\end{itemize}

In both settings, $n_1=0.2N$ individuals receive the treatment and $n_0 = 0.8N$ receive the control. We consider a completely randomized design, and proceed with inference using the reference distribution of the following test statistics under the (erroneous) assumption that the sharp null holds:
\begin{enumerate}
\item Hotelling's $T$-squared, unpooled covariance
\item Hotelling's $T$-squared, pooled covariance
\item Max absolute $t$-statistic, unpooled standard error
\end{enumerate}
For each candidate test, we proceed with the randomization distribution both of the untransformed test statistic and the Gaussian prepivoted test statistic. These tests are conducted using Monte-carlo simulation to generate the reference distributions, with $1000$ draws from $\Omega$ for each iteration $b$. In addition to the two types of randomization tests, we also compute a large-sample $p$-value for each test which is asymptotically valid under the weak null hypothesis. As a covariance estimator $\hat{V}$, we use the conventional unpooled covariance estimator for $\sqrt{N}\hat{\bm{\tau}}$.  For each test, we seek to maintain the Type I error rate at or below $\alpha= 0.05$.  For the generative models reflecting the sharp and weak nulls we proceed with both $N=300$ and $N=5000$ to compare performance as $N$ increases. For each $N$, we conduct $B=5000$ simulations.\looseness=-1

    \begin{table}
        \caption{\label{tab:mult} Inference in completely randomized designs with multiple outcomes. The rows describe the simulation settings, which vary between the sharp and weak nulls holding and between small and large sample sizes. There are three sets of columns, one corresponding to each of the three test statistics under consideration. For each set of columns, the column labeled ``FRT'' represents the Fisher randomization test using that test statistic. The column labeled ``Pre.'' instead reflects the Fisher randomization test after applying Gaussian prepivoting to the original test statistic. The last column, labeled ``LS,'' is a large-sample test which is asymptotically valid for the weak null. The desired Type I error rate in all settings is  $\alpha=0.05$.}
        \centering
        \begin{tabular}{l c c c c c c c c c} 
                &\multicolumn{3}{c}{Hotelling, Unpooled} & \multicolumn{3}{c}{Hotelling, Pooled} & \multicolumn{3}{c}{Max $t$-stat}\\
                & FRT & Pre. & LS &  FRT & Pre. & LS & FRT & Pre. & LS\\
                Sharp, $N=300$  &   0.050 & 0.050 & 0.321 & 0.052 & 0.047 & 0.086 & 0.051 & 0.050 & 0.068 \\ 
                Sharp, $N=5000$ &   0.044 & 0.044 & 0.053 & 0.047 & 0.042 & 0.045 & 0.046 & 0.045 & 0.048 \\ 
                Weak, $N=300$   &   0.117 & 0.117 & 0.270 & 0.975 & 0.166 & 0.157 & 0.020 & 0.006 & 0.008 \\ 
                Weak, $N=5000$  &   0.003 & 0.003 & 0.003 & 0.951 & 0.005 & 0.005 & 0.021 & 0.005 & 0.005 \\ 
        \end{tabular}


    \end{table}

Table \ref{tab:mult} gives the estimated Type I error rates for the candidate tests. We first note the poor performance of the large-sample tests under both the sharp and weak null with $N=300$. For instance, the large-sample $p$-values constructed using the unpooled, Hotelling procedure are attained using a $\chi^2_{25}$ distribution and have estimated Type I error rates of 0.321 under the sharp null for $N=300$, and of 0.270 under the weak null for $N=300$ despite the desired control at $\alpha=0.05$. By $N=5000$, the large-sample tests all have estimated Type I error rates approaching the nominal level under the sharp null, and below the nominal level under the weak null.

Naturally, all randomization tests attain (up to Monte-Carlo error) the desired Type I error rate under the sharp null at both $N=300$ and $N=5000$, highlighting the appeal of the randomization tests. Under the weak null, we see that the randomization test based upon the Hotelling $T$-statistic with a pooled covariance fails to control the Type I error rate even at $N=5000$, reflecting that the test statistic is not asymptotically sharp-dominant.  While the randomization test based on the max $t$-statistic controls the Type I error rate in these simulations, this is not guaranteed in general: in the appendix we conduct this simulation at $\alpha=0.25$, where anti-conservativeness of the max $t$-statistic arises.  For both of these test statistics, applying Gaussian prepivoting restores guaranteed asymptotic conservativeness and results in test statistics whose performance closely aligns with the large-sample tests, a reflection of Theorem \ref{thm: gaussian}. For the test based upon Hotelling's $T$ statistic with an unpooled covariance estimator, observe that the Type I error rates for the randomization tests with and without Gaussian prepivoting are identical in all four scenarios tested. As discussed in Example \ref{example: mult} of \S \ref{sec: examples}, this is because Gaussian prepivoting is unnecessary for this particular test statistic: Hotelling's $T$ statistic with an unpooled covariance estimator is already asymptotically sharp-dominant as proven in \citet{RandTestsWeakNulls}. Applying Gaussian prepivoting recovers an equivalent randomization test, furnishing identical $p$-values for any observed outcomes $\mathbf{y}(\mathbf{Z})$ for completely randomized designs.
\color{black}

In the appendix we provide a theoretical analysis of the statistical power of Gaussian prepivoting and include simulations to demonstrate the power in practice.  We also provide analysis of real-world data from the Student Achievement and Retention experiment of \citet{ALO}.

\section{Discussion}\label{sec: conclusion}
	\subsection{An open question: multivariate one-sided testing in finite population causal inference}
	The restrictions on the function $f_{\eta}$ outlined in Condition \ref{cond: f} require a quasi-convex, continuous function that is mirror-symmetric about the origin. This restriction results in convex, mirror-symmetric sublevel sets for $f_\eta$ and facilitates the application of Anderson's theorem, such that dominance in the Loewner order on covariance matrices translates to the stochastic dominance under the weak null. While the restrictions on $f_\eta$ are sensible with two-sided alternatives, they preclude testing directional alternatives because of the mirror symmetry condition. For instance, suppose one wanted to test the null hypothesis $\bar{\tau}_i\leq 0$ for all $i=1,..,d$ versus the alternative that for at least one $i$ $(i=1,..,d)$, $\bar{\tau}_i > 0$. In the univariate case, choosing $T(\yobs, \bfZ) = \hat{\tau}/\hat{V}_{\tau\tau}^{1/2}$ does not provide a valid one-sided test for all $\alpha$. That said, it \textit{does} provide a valid test for $\alpha \leq 0.5$, such that for any reasonable value for $\alpha$ to be deployed in practice a one-sided test is possible. \looseness=-1

	Suppose we have multivariate potential outcomes and consider the test statistic $T_{max}(\yobs, \bfZ) = \max_{1\leq i\leq d}\;\;\hat{\tau}_i/\hat{V}_{\tau\tau,ii}^{1/2}$, with $\hat{V}_{\tau\tau}$ satisfying Condition \ref{cond: var}. Consider the Gaussian prepivoted test statistic $G_{max}(\yobs, \bfZ)$. The following is, to the best of our knowledge, an open question: is it the case that, for any $\alpha\leq 0.5$, $G_{max}$ is asymptotically sharp-dominant, in that $\limsup\;\;\mathbb{E}\{\varphi_{G_{max}}(\alpha)\}\leq \alpha$? Under the assumptions imposed in this work, the answer would be true should the following conjecture on Gaussian comparisons hold:
	\begin{conjecture}
	Let $\mathbf{X} = (X_1,...,X_d)$, and $\mathbf{Y} = (Y_1,...,Y_d)$ be $d$-dimensional multivariate Gaussian vectors, with a common mean $\bm{\mu} = (\mu_1,...,\mu_d)$ but distinct covariances $\Sigma^X$ and $\Sigma^Y$, with $ij$ entries $\sigma_{ij}^X$ and $\sigma_{ij}^Y$, respectively. Let $\gamma_{ij}^X = \mathbb{E}\{(X_i-X_j)^2\}$ and $\gamma_{ij}^Y = \mathbb{E}\{(Y_i-Y_j)^2\}$.  Define $med\left(\max_i Y_i\right)$ as the median of $\underset{1\leq i\leq d}{\max}Y_i$, i.e. the value $a$ such that $\Prob{\underset{1\leq i\leq d}{\max}Y_i\leq a } = 0.5$. Suppose that $\sigma^Y_{ii} \geq \sigma^X_{ii}$ for all $i$ and that $\gamma_{ij}^Y \geq \gamma_{ij}^X$ for all $i,j$. Consider any point $c \geq med\left(\max_i Y_i\right)$. Then,
	\begin{equation*}
\Prob{\underset{1\leq i\leq d}{\max}\;X_i\geq c} \leq (?)\;\; \Prob{\underset{1\leq i\leq d}{\max}\;Y_i\geq c}.
\end{equation*}
	\end{conjecture}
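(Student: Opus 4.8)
The plan is to reduce the statement to a comparison of Gaussian measures of a product half-space and then attack it by interpolating between the two covariances. Writing $B_c = \{\mathbf{z}\in\mathbb{R}^d : z_i < c \ \text{for all } i\}$, the claimed inequality is equivalent to $\mathbb{P}(\mathbf{X}\in B_c) \geq \mathbb{P}(\mathbf{Y}\in B_c)$. In one dimension this is immediate: $\mathbb{P}(X \geq c) = \bar{\Phi}\big((c-\mu)/\sqrt{\sigma^X_{11}}\big)$ and $\mathbb{P}(Y \geq c) = \bar{\Phi}\big((c-\mu)/\sqrt{\sigma^Y_{11}}\big)$, and since $\sigma^Y_{11}\geq\sigma^X_{11}$ the inequality holds exactly when $c\geq\mu=\mathrm{med}(\max Y)$. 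The genuine difficulty is multivariate, coming from the simultaneous action of the variance condition $\sigma^Y_{ii}\geq\sigma^X_{ii}$ and the variogram condition $\gamma^Y_{ij}\geq\gamma^X_{ij}$.

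For $d\geq 2$, let $\Sigma(t) = (1-t)\Sigma^X + t\Sigma^Y$, positive semidefinite for all $t\in[0,1]$ (replace $\Sigma(t)$ by $\Sigma(t)+\varepsilon I$ and let $\varepsilon\downarrow 0$ if invertibility is wanted), let $\phi_t$ be the density of $\mathcal{N}(\bm{\mu},\Sigma(t))$, and set $g(t)=\int_{B_c}\phi_t(\mathbf{z})\,d\mathbf{z}$. It suffices to prove $g'(t)\leq 0$ on $[0,1]$. Writing $\dot\sigma_{ij}=\sigma^Y_{ij}-\sigma^X_{ij}$ and $\dot\gamma_{ij}=\gamma^Y_{ij}-\gamma^X_{ij}$, the heat-type identity $\tfrac{d}{dt}\phi_t = \tfrac12\sum_{i,j}\dot\sigma_{ij}\,\partial_{z_i}\partial_{z_j}\phi_t$, combined with $\dot\sigma_{ij}=\tfrac12(\dot\sigma_{ii}+\dot\sigma_{jj}-\dot\gamma_{ij})$ for $i\neq j$, gives after rearrangement
\begin{align*}
g'(t) = -\frac14\sum_{i\neq j}\dot\gamma_{ij}\int_{B_c}\partial_{z_i}\partial_{z_j}\phi_t\,d\mathbf{z} \;+\; \frac12\sum_{i}\dot\sigma_{ii}\int_{B_c}\partial_{z_i}\Big(\sum_{j}\partial_{z_j}\phi_t\Big)d\mathbf{z}.
\end{align*}
For $i\neq j$, integrating the mixed partial over $B_c$ leaves the joint density of $(Z_i,Z_j)$ under $\mathcal{N}(\bm{\mu},\Sigma(t))$ at $(c,c)$ times the conditional probability that the remaining coordinates lie below $c$, which is nonnegative; since $\dot\gamma_{ij}\geq 0$ the first sum is $\leq 0$. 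This is precisely the Slepian/Sudakov--Fernique content of the problem and it goes through cleanly; in particular, when the marginal variances agree ($\dot\sigma_{ii}=0$) the second sum vanishes and $g'(t)\leq 0$ holds with no restriction on $c$, recovering Slepian's lemma.

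The obstacle is the second sum, the aggregate effect of the variance increments $\dot\sigma_{ii}\geq 0$, and the median hypothesis is the price of allowing unequal variances. Expanding $\int_{B_c}\partial_{z_i}\big(\sum_j\partial_{z_j}\phi_t\big) = \int_{B_c}\partial_{z_i}^2\phi_t + \sum_{j\neq i}\int_{B_c}\partial_{z_i}\partial_{z_j}\phi_t$, the off-diagonal terms are again nonnegative, so they push $g'(t)$ in the \emph{wrong} direction and must be dominated by the pure term $\int_{B_c}\partial_{z_i}^2\phi_t$, which equals $2\,(\partial/\partial\sigma_{ii})\,\mathbb{P}_{\Sigma(t)}(\mathbf{Z}\in B_c)$ and is negative only when $c$ sits sufficiently far in the right tail. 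This is exactly where $c\geq\mathrm{med}(\max Y)$ enters: since $\max_i Y_i\geq Y_j$ forces $\mathrm{med}(\max Y)\geq\mathrm{med}(Y_j)=\mu_j$, it yields $c\geq\max_i\mu_i$, so every effective threshold $c-\mu_i$ is nonnegative, the multivariate analogue of the condition $c\geq\mu$ that sufficed in one dimension. The step I expect to be decisive, and for which I do not have a clean argument, is showing that the \emph{net} variance contribution $\sum_i\dot\sigma_{ii}\int_{B_c}\partial_{z_i}(\sum_j\partial_{z_j})\phi_t\,d\mathbf{z}$ stays $\leq 0$ for \emph{every} $t\in[0,1]$: one needs a monotonicity statement of the form ``$c\geq\mathrm{med}(\max\mathbf{Z}_t)$ for $\mathbf{Z}_t\sim\mathcal{N}(\bm{\mu},\Sigma(t))$ implies $\sum_i\dot\sigma_{ii}\,(\partial/\partial\sigma_{ii})\mathbb{P}(\mathbf{Z}_t\in B_c)\leq 0$,'' and controlling how $\mathrm{med}(\max\mathbf{Z}_t)$ and these boundary integrals move together along the path is the heart of the matter. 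A fallback is a two-stage path — first rescale coordinates so that marginal variances match those of $\mathbf{Y}$, handling the variance enlargement while $c$ remains in the right tail, then deform the correlations alone (a pure Slepian step) — but verifying that the variogram ordering survives the rescaling introduces its own gap, so I would not expect this route to close the problem; this is why the statement is posed here as a conjecture rather than a theorem.
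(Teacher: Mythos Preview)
There is nothing in the paper to compare against: this statement is explicitly labeled a \emph{conjecture} and the paper offers no proof. The paper observes only that the univariate case is immediate, that Slepian's lemma handles the equal-variance case $\sigma^X_{ii}=\sigma^Y_{ii}$, that Sudakov--Fernique controls the expectation of the maximum, and that a ``modification of the soft-max proof of the Sudakov--Fernique inequality found in Chatterjee'' might be a route forward.

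Your proposal is precisely an attempt at that route. The interpolation $\Sigma(t)=(1-t)\Sigma^X+t\Sigma^Y$, the heat-equation identity for $g'(t)$, the rewriting of $\dot\sigma_{ij}$ in terms of the variogram increments $\dot\gamma_{ij}$, and the recognition that the mixed-partial boundary terms $\int_{B_c}\partial_{z_i}\partial_{z_j}\phi_t\geq 0$ for $i\neq j$ yield the Slepian piece ``for free'' are all correct and constitute exactly the Chatterjee-style smart-path argument the paper gestures toward. Your observation that $c\geq \mathrm{med}(\max_i Y_i)\geq \max_i\mu_i$ is also correct and is the right way to see how the median hypothesis enters.

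You have, in fact, gone further than the paper does: you have isolated the obstruction precisely as the sign of $\sum_i \dot\sigma_{ii}\int_{B_c}\partial_{z_i}\big(\sum_j\partial_{z_j}\phi_t\big)\,d\mathbf{z}$, noted that its off-diagonal pieces push the wrong way, and identified that what is needed is control of this quantity uniformly along the path together with control of how $\mathrm{med}(\max \mathbf{Z}_t)$ moves. Your honest acknowledgment that you cannot close this step, and that this is why the statement remains a conjecture, matches the paper's own assessment. There is no error in your analysis; there is simply no proof yet, in your write-up or in the paper.
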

	The conjecture is true in the univariate case. Under the assumptions of this conjecture, the Sudakov-Fernique inequality \citep[Theorem 2.2.5]{adl09} asserts that $\mathbb{E}\{\max_{1\leq i\leq d}\;X_i\}\leq \mathbb{E}\{\max_{1\leq i\leq d}\;Y_i\}$. Should we further assume $\sigma_{ii}^X=\sigma_{ii}^Y$, the result holds for all points $c$ through Slepian's lemma \citep[Theorem 5.1.7]{Slepian, mvtNormal_Tong}. Unfortunately,  a refined result about tail probabilities above the median does not appear to be available in the literature under the conditions outlined in the conjecture. A potential path forward may be a modification of the soft-max proof of the Sudakov-Fernique inequality found in \citet{cha05}.

\subsection{Summary}
In this work, we present a general framework for designing randomization tests that are both exact for Fisher's sharp null and are asymptotically conservative for Neyman's weak null in completely randomized experiments and rerandomized designs. Loosely stated, the approach may be summarized as follows: if one has access to a large-sample test that is asymptotically conservative under Neyman's weak null, then a Fisher randomization test using the $p$-value produced by that large-sample test will maintain asymptotic correctness under the weak null while additionally restoring exactness should the sharp null be true. As the Fisher randomization distribution of these $p$-values converges weakly in probability to a uniform, the resulting randomization test assuming the sharp will have the same large-sample performance under the weak null as large-sample test itself, and will further have the same asymptotic power under local alternatives as the large-sample test.  We show that Gaussian prepivoting exactly recovers several randomization tests known to be valid under the weak null, while providing a general approach to restore asymptotic correctness to randomization tests for a large class of test statistics. Importantly, our framework immediately provides valid randomization tests of the weak null hypothesis in rerandomized designs, absent from the literature until now.

\section{Additional Materials}
\textcolor{black}{Example code to implement Algorithm~\ref{alg: Gaussian prepivoting} is available at \url{https://github.com/PeterLCohen/PrepivotingCode}.} \looseness=-1

\delineate
\begin{center}
    \textbf{\huge{Appendix}}
\end{center}
\appendix

\setcounter{lemma}{0}
\setcounter{assumption}{0}
\setcounter{condition}{0}

\section{Useful lemmas}

\renewcommand{\thetheorem}{\Alph{theorem}}
\renewcommand{\thelemma}{\Alph{lemma}}

\begin{lemma}\label{supp: lem: gaussian measure continuous}
    For any Borel measurable set $B \subseteq \R^{\ell}$, the centered Gaussian measure of $B$ is a continuous function in terms of the covariance parameter.  In other words, $\gamma_{\mathbf{0}, \Sigma}^{\ell}(B)$ is a continuous function of $\Sigma$ over the positive definite cone of $\ell \times \ell$ real matrices with metric induced by the Frobenius norm.
\end{lemma}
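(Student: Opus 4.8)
The plan is to reduce the statement to an $L^1$ convergence of the Gaussian densities, which then controls the variation of the associated measures uniformly over all Borel sets $B$ at once. Write the centered Gaussian density as $f_\Sigma(\mathbf{x}) = \{(2\pi)^{\ell}|\Sigma|\}^{-1/2}\exp\{-\tfrac{1}{2}\mathbf{x}^{\T}\Sigma^{-1}\mathbf{x}\}$, so that $\gamma^{\ell}_{\mathbf{0},\Sigma}(B) = \int_B f_\Sigma(\mathbf{x})\,d\mathbf{x}$. Since the positive definite cone is open, for any $\Sigma$ in the cone and any sequence $\Sigma_n \to \Sigma$ in Frobenius norm the matrices $\Sigma_n$ lie in the cone for all large $n$, so each $f_{\Sigma_n}$ is a genuine probability density.

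First I would observe that $\Sigma \mapsto f_\Sigma(\mathbf{x})$ is continuous on the positive definite cone for each fixed $\mathbf{x}$: the map $\Sigma \mapsto |\Sigma|$ is a polynomial, and $\Sigma \mapsto \Sigma^{-1}$ is continuous wherever $\det\Sigma \neq 0$ (its entries are rational functions of the entries of $\Sigma$ with nonvanishing denominator $|\Sigma|$), so both the quadratic form $\mathbf{x}^{\T}\Sigma^{-1}\mathbf{x}$ and the normalizing constant vary continuously with $\Sigma$. Hence $\Sigma_n \to \Sigma$ forces $f_{\Sigma_n}(\mathbf{x}) \to f_\Sigma(\mathbf{x})$ for every $\mathbf{x} \in \R^{\ell}$.

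Next I would invoke Scheffé's lemma. Since $\int_{\R^{\ell}} f_{\Sigma_n} = \int_{\R^{\ell}} f_\Sigma = 1$ for every $n$, the pointwise convergence of the nonnegative densities $f_{\Sigma_n} \to f_\Sigma$ upgrades to $L^1$ convergence, i.e. $\int_{\R^{\ell}} |f_{\Sigma_n}(\mathbf{x}) - f_\Sigma(\mathbf{x})|\,d\mathbf{x} \to 0$. Consequently, for every Borel $B \subseteq \R^{\ell}$,
\begin{align*}
\bigl|\gamma^{\ell}_{\mathbf{0},\Sigma_n}(B) - \gamma^{\ell}_{\mathbf{0},\Sigma}(B)\bigr| = \left|\int_B \{f_{\Sigma_n}(\mathbf{x}) - f_\Sigma(\mathbf{x})\}\,d\mathbf{x}\right| \leq \int_{\R^{\ell}} |f_{\Sigma_n}(\mathbf{x}) - f_\Sigma(\mathbf{x})|\,d\mathbf{x} \longrightarrow 0.
\end{align*}
Because this holds for every sequence $\Sigma_n \to \Sigma$ in the cone, sequential continuity yields continuity of $\Sigma \mapsto \gamma^{\ell}_{\mathbf{0},\Sigma}(B)$; indeed the same bound shows the stronger fact that $\Sigma \mapsto \gamma^{\ell}_{\mathbf{0},\Sigma}$ is continuous in total variation, with a modulus of continuity independent of $B$.

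I do not anticipate a genuine obstacle, but the one subtlety worth flagging is that $\mathbf{1}_B$ is not continuous, so one cannot merely apply the continuous mapping theorem or dominated convergence to a pointwise limit of indicators; Scheffé's lemma is precisely the tool that sidesteps this, trading a dominating envelope for the (automatic) fact that all $f_{\Sigma_n}$ integrate to the same constant. A more hands-on alternative would be to build an explicit integrable dominator for $f_{\Sigma_n}$ valid for all large $n$ — available because the eigenvalues of $\Sigma_n$ are eventually bounded away from $0$ and $\infty$ — and then invoke dominated convergence directly, but the Scheffé route is shorter and I would take it.
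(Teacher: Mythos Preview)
Your proof is correct. The paper's own argument differs in mechanics: it separates the normalizing constant $\{(2\pi)^\ell\det\Sigma\}^{-1/2}$ from the exponential part, shows pointwise convergence of each, and then applies dominated convergence to $\int_B \exp(-\tfrac12\mathbf{x}^\T\Sigma_N^{-1}\mathbf{x})\,d\mathbf{x}$ using an explicit dominator built from a Loewner upper bound $\Sigma_*\succeq\Sigma_N$ (so that $\Sigma_N^{-1}\succeq\Sigma_*^{-1}$ and hence $\exp(-\tfrac12\mathbf{x}^\T\Sigma_N^{-1}\mathbf{x})\leq\exp(-\tfrac12\mathbf{x}^\T\Sigma_*^{-1}\mathbf{x})$, the latter being integrable). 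Your Scheff\'e argument sidesteps the need to construct this envelope entirely by exploiting that all the $f_{\Sigma_n}$ integrate to $1$, and as you note it delivers a strictly stronger conclusion: continuity of $\Sigma\mapsto\gamma^\ell_{\mathbf{0},\Sigma}$ in total variation, uniform over Borel $B$. The paper's route is the ``hands-on alternative'' you mention in your last paragraph; both are short, but yours is the cleaner of the two.
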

\begin{proof}
    Denote the space of positive definite real $\ell \times \ell$ matrices by $S_{++}^{\ell}$; this is a metric space under the metric induced by the Frobenius norm.  Consider a sequence of matrices $\Sigma_{N} \in S_{++}^{\ell}$ for which $\Sigma_{N} \rightarrow \Sigma$. By definition for any Borel measurable set $B \subseteq \R^{\ell}$
    \begin{equation*}
        \gamma_{\mathbf{0}, \Sigma_{N}}^{\ell}(B) = \int_{B}\frac{1}{\sqrt{2\pi}^{\ell}}\frac{1}{\sqrt{\det(\Sigma_{N})}}\exp\left(\frac{-\mathbf{x}^{T}\Sigma_{N}^{-1}\mathbf{x}}{2}\right)\,d\mathbf{x}.
    \end{equation*}
    The function $f(M) = \det(M)^{-1/2}$ is continuous over the positive definite cone of $\ell \times \ell$ matrices.  Thus, since $\Sigma_{N} \rightarrow \Sigma$ it follows that \begin{equation}\label{supp: eqn: scalar limit}
        \frac{1}{\sqrt{2\pi}^{\ell}}\frac{1}{\sqrt{\det(\Sigma_{N})}} \rightarrow \frac{1}{\sqrt{2\pi}^{\ell}}\frac{1}{\sqrt{\det(\Sigma)}}.
    \end{equation}

    All that remains to be examined is the limiting behavior of
    \begin{equation*}
        \int_{B}\exp\left(\frac{-\mathbf{x}^{\T}\Sigma_{N}^{-1}\mathbf{x}}{2}\right)\,d\mathbf{x}.
    \end{equation*}
    For $(M, \mathbf{x}) \in S_{++}^{\ell} \times \R^{\ell}$ the function $g(M, \mathbf{x}) = \exp(-\mathbf{x}^{\T}M^{-1}\mathbf{x}/2)$ is a jointly continuous of both $\mathbf{x}$ and $M$.  Consequently, for all $\mathbf{x} \in \R^{\ell}$
    \begin{equation*}
        \exp\left(\frac{-\mathbf{x}^{\T}\Sigma_{N}^{-1}\mathbf{x}}{2}\right) \rightarrow \exp\left(\frac{-\mathbf{x}^{\T}\Sigma^{-1}\mathbf{x}}{2}\right).
    \end{equation*}

    Since all convergent sequences are bounded there exits a positive semidefinite matrix $\Sigma_{*}$ that is greater than or equal to (in the Loewner partial order) all $\Sigma_{N}$.  Thus, $\Sigma_{N}^{-1} \succeq \Sigma_{*}^{-1}$ for all $N \in \N$.  Consequently $g(\Sigma_{N}, \mathbf{x})$ is dominated by $g(\Sigma_{*}, \mathbf{x})$ for all $N$ and all $\mathbf{x} \in \R^{\ell}$.  Thus, Lebesgue's dominated convergence theorem implies that
    \begin{equation}\label{supp: eqn: integral limit}
        \int_{B}\exp\left(\frac{-\mathbf{x}^{\T}\Sigma_{N}^{-1}\mathbf{x}}{2}\right)\,d\mathbf{x} \rightarrow \int_{B}\exp\left(\frac{-\mathbf{x}^{\T}\Sigma^{-1}\mathbf{x}}{2}\right)\,d\mathbf{x}.
    \end{equation}

    Combining \eqref{supp: eqn: scalar limit} and \eqref{supp: eqn: integral limit} implies that for all sequences $\Sigma_{N} \in S_{++}^{\ell}$ such that $\Sigma_{N} \rightarrow \Sigma$
    \begin{equation}\label{supp: eqn: sequential continuity}
        \gamma_{\mathbf{0}, \Sigma_{N}}^{\ell}(B) \rightarrow \gamma_{\mathbf{0}, \Sigma}^{\ell}(B).
    \end{equation}

    \eqref{supp: eqn: sequential continuity} establishes that $\gamma_{\mathbf{0}, \Sigma}^{\ell}(B)$ is a sequentially continuous function of the parameter $\Sigma$ for all $\Sigma \in S_{++}^{\ell}$.  Sequential continuity in a metric space is equivalent to continuity \citep[Theorem 5.31]{metricSpacesContinuity}; so $\gamma_{\mathbf{0}, \Sigma}^{\ell}(B)$ is a continuous function of the parameter $\Sigma$ for all $\Sigma \in S_{++}^{\ell}$.

\end{proof}

\delineate

\begin{lemma}\label{supp: lem: prepivoting is a continuous function}
    Let a function $f_{\eta}(\cdot)$ satisfy Condition~\ref{supp: cond: f} and let $\phi(\cdot)$ satisfy Condition~\ref{supp: cond: phi}.  Let the matrix $V \in S_{++}^{(d + k)}$ be defined blockwise as
    \begin{equation*}
        V = \begin{pmatrix} V_{\tau \tau} & V_{\tau \delta}\\ V_{\delta \tau} & V_{\delta \delta}\end{pmatrix}.
    \end{equation*}
    Consider
    \begin{equation*}
        h(V, \eta, x) = \frac{\gamma^{(d+k)}_{\bm{0}, V}\left\{(\mathbf{a}, \mathbf{b})^\T: f_{\eta}(\mathbf{a}) \leq x\; \wedge\; \phi(\mathbf{b})=1\right \}}{\gamma^{(k)}_{\bm{0},V_{\delta \delta}}\left\{\mathbf{b}:  \phi(\mathbf{b})=1\right\}}.
    \end{equation*}
    The function $h(V, \eta, x)$ is a continuous function of $V$, $\eta$, and $x$ jointly.
\end{lemma}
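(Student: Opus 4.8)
The plan is to handle the denominator and the numerator of $h$ separately. Write $M = \{\mathbf{b}\in\mathbb{R}^k:\phi(\mathbf{b})=1\}$ and, for $(\eta,x)\in\Xi\times\mathbb{R}$, $\mathcal{B}_{\eta,x}=\{(\mathbf{a},\mathbf{b})^\T: f_\eta(\mathbf{a})\le x\ \wedge\ \phi(\mathbf{b})=1\}$, so that $h(V,\eta,x)=\gamma^{(d+k)}_{\bm{0},V}(\mathcal{B}_{\eta,x})\big/\gamma^{(k)}_{\bm{0},V_{\delta\delta}}(M)$. The denominator depends on $V$ only through the continuous block‑extraction map $V\mapsto V_{\delta\delta}$ and evaluates the \emph{fixed} Borel set $M$, so by Lemma~\ref{supp: lem: gaussian measure continuous} it is a continuous function of $V$; since $M$ is convex with nonempty interior (Condition~\ref{supp: cond: phi}) and $V_{\delta\delta}$ is a principal submatrix of $V\in S_{++}^{(d+k)}$ and hence positive definite, the denominator is strictly positive, so $V\mapsto 1/\gamma^{(k)}_{\bm{0},V_{\delta\delta}}(M)$ is continuous as well. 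It therefore suffices to prove that the numerator $(V,\eta,x)\mapsto\gamma^{(d+k)}_{\bm{0},V}(\mathcal{B}_{\eta,x})$ is jointly continuous, and (as in the proof of Lemma~\ref{supp: lem: gaussian measure continuous}, using that sequential continuity is equivalent to continuity in a metric space) it is enough to check sequential continuity.

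For the numerator I would use a coupling argument. Fix $(V_n,\eta_n,x_n)\to(V,\eta,x)$ in $S_{++}^{(d+k)}\times\Xi\times\mathbb{R}$, let $\mathbf{Z}$ be a standard $(d+k)$‑dimensional Gaussian vector on a common probability space, and set $\mathbf{G}_n=V_n^{1/2}\mathbf{Z}$ and $\mathbf{G}=V^{1/2}\mathbf{Z}$, with $(\cdot)^{1/2}$ the symmetric positive definite square root. The matrix square root is continuous on the positive definite cone, so $V_n^{1/2}\to V^{1/2}$ and hence $\mathbf{G}_n\to\mathbf{G}$ almost surely. Partitioning $\mathbf{G}_n=(\mathbf{A}_n,\mathbf{B}_n)^\T$ and $\mathbf{G}=(\mathbf{A},\mathbf{B})^\T$ into their first $d$ and last $k$ coordinates, we have
\begin{equation*}
\gamma^{(d+k)}_{\bm{0},V_n}(\mathcal{B}_{\eta_n,x_n})=\mathbb{P}\{f_{\eta_n}(\mathbf{A}_n)\le x_n,\, \mathbf{B}_n\in M\},
\end{equation*}
and likewise with the subscript $n$ removed. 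Since $\eta_n\to\eta$ and $\mathbf{A}_n\to\mathbf{A}$ a.s., the joint continuity of $f_\eta(\mathbf{t})$ in $(\eta,\mathbf{t})$ (Condition~\ref{supp: cond: f}) gives $f_{\eta_n}(\mathbf{A}_n)\to f_\eta(\mathbf{A})$ a.s. On the event $E=\{f_\eta(\mathbf{A})\ne x\}\cap\{\mathbf{B}\notin\partial M\}$, the convergences $f_{\eta_n}(\mathbf{A}_n)\to f_\eta(\mathbf{A})$, $x_n\to x$, $\mathbf{B}_n\to\mathbf{B}$ together with the closedness of $M$ yield $\mathbf{1}\{f_{\eta_n}(\mathbf{A}_n)\le x_n\}\to\mathbf{1}\{f_\eta(\mathbf{A})\le x\}$ and $\mathbf{1}\{\mathbf{B}_n\in M\}\to\mathbf{1}\{\mathbf{B}\in M\}$, so the products of these indicators converge on $E$. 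As the indicators are bounded by $1$, the dominated convergence theorem then gives $\gamma^{(d+k)}_{\bm{0},V_n}(\mathcal{B}_{\eta_n,x_n})\to\gamma^{(d+k)}_{\bm{0},V}(\mathcal{B}_{\eta,x})$, \emph{provided} $\mathbb{P}(E^c)=0$.

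Establishing $\mathbb{P}(E^c)=0$ is the step I expect to be the main obstacle. One half is routine: $M$ is closed and convex with nonempty interior, so $\partial M$ is Lebesgue‑null, and $\mathbf{B}\sim N(\bm{0},V_{\delta\delta})$ with $V_{\delta\delta}\succ 0$ is absolutely continuous, whence $\mathbb{P}(\mathbf{B}\in\partial M)=0$. The delicate half is $\mathbb{P}(f_\eta(\mathbf{A})=x)=0$: because $\mathbf{A}\sim N(\bm{0},V_{\tau\tau})$ has a density, it suffices that the level set $\{\mathbf{a}:f_\eta(\mathbf{a})=x\}$ be Lebesgue‑null, i.e. that $f_\eta(\mathbf{A})$ be atomless. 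Quasi‑convexity makes the sublevel set $\{f_\eta\le x\}$ convex, but on its own this does not preclude $f_\eta$ from being flat at level $x$ on a set of positive volume; ruling that out is precisely where the full regularity imposed on $f_\eta$ in Condition~\ref{supp: cond: f} must be invoked, after which $\{f_\eta=x\}$ lies in the topological boundary of the convex set $\{f_\eta\le x\}$ and hence has Lebesgue measure zero. With $\mathbb{P}(E^c)=0$ in hand, sequential — and therefore full — continuity of the numerator follows, and combining this with the first paragraph completes the proof.
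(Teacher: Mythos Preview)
Your coupling approach via $V_n^{1/2}\mathbf{Z}$ is more direct than the paper's route, which instead argues that the set-valued map $(\eta,x)\mapsto\mathcal{B}_{\eta,x}$ is continuous in the Fr\'echet--Nikod\'ym pseudometric $d(B,B')=\mu(B\triangledown B')$ and then appeals to absolute continuity of Gaussian measure with respect to Lebesgue measure. Both arguments ultimately reduce to the same obstruction you flag: whether the level set $\{\mathbf{a}:f_\eta(\mathbf{a})=x\}$ is Lebesgue-null.

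The gap is that Condition~\ref{supp: cond: f} does not resolve this. It imposes only continuity, quasi-convexity, nonnegativity, mirror symmetry, and joint continuity in $(\eta,\mathbf{t})$; none of these prevents $f_\eta$ from being flat on a set of positive volume. Take $f_\eta(\mathbf{t})=\max(\lVert\mathbf{t}\rVert-1,0)$: it satisfies every requirement of Condition~\ref{supp: cond: f}, yet $\{f_\eta=0\}$ is the closed unit ball, and with $x_n\uparrow 0$ the numerator jumps from $0$ to $\gamma^{(d+k)}_{\bm{0},V}\bigl(\{\lVert\mathbf{a}\rVert\le 1\}\times M\bigr)>0$, so $h$ is genuinely discontinuous at $x=0$. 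Your assertion that $\{f_\eta=x\}$ lies in the topological boundary of the convex sublevel set fails here: the origin is interior to $\{f_\eta\le 0\}$ but has $f_\eta=0$. The paper's proof has the same hidden gap---its map $\kappa$ is not continuous in the Fr\'echet--Nikod\'ym metric at such $(\eta,x)$. The downstream theorems are unaffected because every concrete $f_\eta$ actually used in the paper ($|t|$, $\mathbf{t}^\T\eta^{-1}\mathbf{t}$, $\max_j|t_j|/\eta_j$) does have Lebesgue-null level sets, and your argument is valid for those; but for arbitrary $f_\eta$ satisfying Condition~\ref{supp: cond: f}, the lemma as stated is false, and no appeal to Condition~\ref{supp: cond: f} will close the hole you have correctly located.
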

\begin{proof}
   Because $V$ is positive definite, $V_{\delta \delta}$ must be as well.  Thus, the centered Gaussian measure $\gamma^{(k)}_{\bm{0},V_{\delta \delta}}(\cdot)$ is non-singular.  Furthermore, because $\phi(\cdot)$ satisfies Condition~\ref{supp: cond: phi}, the set $\left\{\mathbf{b}:  \phi(\mathbf{b})=1\right\}$ is Borel measurable with positive Lebesgue measure.  Thus, $\gamma^{(k)}_{\bm{0},V_{\delta \delta}}\left\{\mathbf{b}:  \phi(\mathbf{b})=1\right\}$ is positive.  Moreover, Lemma~\ref{supp: lem: gaussian measure continuous} establishes that $\gamma^{(k)}_{\bm{0},V_{\delta \delta}}\left\{\mathbf{b}:  \phi(\mathbf{b})=1\right\}$ is a continuous function of $V_{\delta \delta}$, and thus of $V$.

   Consider the function $\kappa: (\eta, x) \mapsto \left\{(\mathbf{a}, \mathbf{b})^\T: f_{\eta}(\mathbf{a}) \leq x\; \wedge\; \phi(\mathbf{b})=1\right \}$.  The range of $\kappa$ is the set of Borel measurable sets in $\R^{(d + k)}$.  This space can be imbued with the metric\footnote{Actually, $d(B, B')$ is a pseudo-metric unless one considers two sets equal if their symmetric difference is of measure zero.  We take this convention since -- by absolute continuity -- sets of Lebesgue measure zero are of Gaussian measure zero as well.}
   $$d(B, B') = \mu(B \triangledown B')$$
   where $B \triangledown B'$ is the symmetric difference of $B$ and $B'$ and $\mu(\cdot)$ is Lebesgue measure on $\R^{(d + k)}$; this is sometimes called the \textit{ Fr\'{e}chet–Nikod\'{y}m–Aronszajn distance} \citep[Section 4]{setMetrics}.  Consider sequences of $\eta_{N}$ which converge to $\eta$ and $x_{N}$ which converge to $x$.  Let $B_{N}$ denote $\kappa(\eta_{N}, x_{N})$; the set-theoretic limit of $B_{N}$ converges to $B$ under $d(B, B')$.  This relies upon the continuity of $f_{\eta}(\mathbf{a})$ in $\eta$.  Thus, $\kappa$ is sequentially continuous in $\eta$ and $x$ jointly.  Sequential continuity in a metric space is equivalent to continuity \citep[Theorem 5.31]{metricSpacesContinuity}; so $\kappa$ is jointly continuous in $\eta$ and $x$.

   The numerator of $h(V, \eta, x)$ is the composition of $\gamma^{(d+k)}_{\bm{0}, V}(B)$ with $\kappa(\eta, x)$; the former is continuous in $V$ by Lemma~\ref{supp: lem: gaussian measure continuous} and in $B$ by the absolute continuity of Gaussian measure, and the later is jointly continuous in $\eta$ and $x$.  Thus, the numerator of $h(V, \eta, x)$ is jointly continuous in $V$, $\eta$, and $x$.  Since the denominator of $h(V, \eta, x)$ is a continuous function of $V$ that is always positive, the function $h(V, \eta, x)$ itself is a jointly continuous function of $V$, $\eta$, and $x$.
\end{proof}

\section{Proof of main results}
\subsection{A reminder: assumptions and conditions}

As in the main text, we rely on some regularity conditions which we restate below for convenience.

\setcounter{assumption}{0}
\renewcommand{\theassumption}{\arabic{assumption}}
\begin{assumption}\label{supp: asm: non-degen sampling limit}
	The proportion $n_{1} / N$ limits to $p \in (0, 1)$ as $N \rightarrow \infty$.  
\end{assumption}

\begin{assumption}\label{supp: asm: means and covs stabilize}
	All finite population means and covariances have limiting values for both the potential outcomes and the covariates. For instance, $\lim_{N \rightarrow \infty}\bar{\mathbf{y}}(z) = \bar{\mathbf{y}}_{\infty}(z)$ for $z \in \{0, 1\}$ and $\lim_{N \rightarrow \infty}\Sigma_{y(1)} = \Sigma_{y(1),\infty}$.
\end{assumption}

\textcolor{black}{Assumption~3 of the main text is known to be stronger than necessary for certain results.  Here we split Assumption~3 into two parts.  We do this to show exactly which results can rely upon a weaker assumption and which results seem to rely upon the stronger assumption.}

\edef\oldassumption{\the\numexpr\value{assumption}+1}

\setcounter{assumption}{0}
\renewcommand{\theassumption}{\oldassumption (\alph{assumption})}
\begin{assumption}\label{supp: asm: worst case vanishes}
 The worst-case squared distance from the average potential outcome  is $o(N)$; i.e.,
	\begin{equation*}
		\lim_{N \rightarrow \infty}\max_{\substack{z \in \{0, 1\}\\ j\in \{1,...,d\}}}\max_{i \in \{1, \ldots, N\}}\frac{\left({y}_{ij}(z) - \bar{{y}}_j(z)\right)^{2}}{N} = 0.
	\end{equation*}
	Further, the above holds for the covariates with ${x}_{ij}$ replacing ${y}_{ij}(z)$ above for $j=1,..,k$.
\end{assumption}
\begin{assumption}\label{supp: asm: bounded fourth moment}
 There exists some $C < \infty$ for which, for all $z\in\{0,1\}$, all $j=1,..,d$ and all $N$,
	\begin{equation*}
		\frac{\sum_{i = 1}^{N}\left(y_{ij}(z) - \overline{y}_j(z)\right)^{4}}{N} < C
	\end{equation*}
	Further, the above holds for the covariates with ${x}_{ij}$ replacing ${y}_{ij}(z)$ above for $j=1,..,k$.
\end{assumption}

\textcolor{black}{Assumption~\ref{supp: asm: bounded fourth moment} implies Assumption \ref{supp: asm: worst case vanishes} \citep[Proposition 1]{RandTestsWeakNulls}.  Assumption~\ref{supp: asm: bounded fourth moment} is made at times for mathematical convenience to simplify the analysis of certain random distributions; though it remains an open question whether such results hold under weaker assumptions.}

\delineate

Recall the $\tilde{\mathbf{y}}(Z_i)$ is defined as
\begin{align*}
\tilde{\mathbf{y}}_i(Z_i) &= \mathbf{y}_i(Z_i) - Z_i\bm{\bar{\tau}},
\end{align*} such that $\tilde{\mathbf{y}}(\mathbf{Z}) = \mathbf{y}(\mathbf{Z}) - \mathbf{Z}\bm{\bar{\tau}}^\T$. Further recall the following conditions from the main text.

\begin{condition}
    \label{supp: cond: phi}$\phi: \mathbb{R}^k \mapsto \{0,1\}$ is an indicator function such that the set $M = \{\mathbf{b}: \phi(\mathbf{b}) = 1\}$ is closed, convex, and mirror-symmetric about the origin (i.e., $\mathbf{b} \in M \Leftrightarrow -\mathbf{b} \in M$) with non-empty interior.
\end{condition}

\begin{condition}\label{supp: cond: f}
    For any $\eta \in \Xi$, $f_{\eta}(\cdot): \mathbb{R}^d\mapsto \mathbb{R}_+$ is continuous, quasi-convex, and nonnegative with $f_\eta(\mathbf{t}) = f_\eta(-\mathbf{t})$ for all $\mathbf{t}\in \mathbb{R}^d$. Furthermore, $f_{\eta}(\mathbf{t})$ is jointly continuous in $\eta$ and $\mathbf{t}$.
\end{condition}

\begin{condition}\label{supp: cond: xi}
With $\mathbf{W}, \mathbf{Z}$ independent and each uniformly distributed over $\Omega$,\begin{align*}
\hat{\xi}(\tilde{\mathbf{y}}(\mathbf{Z}), \mathbf{Z})  \overset{p}{\rightarrow}\xi;\;\;
\hat{\xi}(\tilde{\mathbf{y}}(\mathbf{Z}), \mathbf{W}) \overset{p}{\rightarrow}\tilde{\xi}, 
\end{align*}for some $\xi, \tilde{\xi} \in \Xi$.
\end{condition}

\begin{condition}\label{supp: cond: var}
  With $\mathbf{W}, \mathbf{Z}$ independent, both uniformly distributed over $\Omega$, and for some  $\Delta \succeq 0$, $\Delta \in \mathbb{R}^{d\times d}$,\begin{align*} \hat{V}(\tilde{\mathbf{y}}(\mathbf{Z}), \mathbf{Z}) - V \overset{p}{\rightarrow} \begin{pmatrix}\Delta & 0_{d,k}\\  0_{k,d} & 0_{k,k} \end{pmatrix};\;\;
\hat{V}(\tilde{\mathbf{y}}(\mathbf{Z}), \mathbf{W}) - \tilde{V} \overset{p}{\rightarrow} 0_{(d+k), (d+k)}.\end{align*}
\end{condition}
Oftentimes in the proofs it will implicitly be assumed that the weak null holds. For that reason, $\hat{\xi}$ and $\hat{V}$ may be written with $\yobs$ as inputs rather than $\tilde{\mathbf{y}}(\mathbf{Z})$. Let $\Omega_{CRE}$ denote the set of allowable treatment allocation vectors $\bfz$ for a completely randomized experiment.  Formally $$\Omega_{CRE} = \left\{ \bfz \in \{0, 1\}^{N} \,\Bigg|\, \sum_{i = 1}^{N}z_{i} = n_{1} \right\}.$$

\subsection{A remark on limiting distributions for rerandomized designs}
    A completely randomized experiment can be considered a rerandomized experiment for which $\phi(\cdot)$ is identically one.  This trivial balance criterion satisfies Condition~\ref{supp: cond: phi}.\footnote{When no covariate information is collected, this statement is then vacuous, but in such a context the comparison to a rerandomized experiment is also missing.}  When $\phi(\cdot)$ is not vacuous, the interesting case for rerandomized designs, limiting distributions in completely randomized designs continue to provide corresponding limiting distributions after rerandomization under Condition \ref{supp: cond: phi}.

    By the finite population central limit theorem of \citet{FiniteCLT}, $\sqrt{N}(\hat{\bm{\tau}}  - \bar{\bm{\tau}} , \hat{\bm{\delta}})^\T$ is asymptotically distributed according to a mean-zero multivariate Gaussian distribution with covariance matrix $V$, where
	\begin{align*}
    	V\;\; &= \begin{pmatrix} V_{\tau \tau} & V_{\tau \delta}\\ V_{\delta \tau} & V_{\delta \delta}\end{pmatrix};\\
		V_{\tau\tau} & = p^{-1}\Sigma_{y(1),\infty} + (1 - p)^{-1}\Sigma_{y(0),\infty} - \Sigma_{\tau,\infty};       \\
		V_{\delta\delta}       & = \{p(1 - p)\}^{-1}\Sigma_{x,\infty};\\
		V_{\tau \delta}   & = p^{-1}\Sigma_{y(1)x,\infty} + (1-p)^{-1}\Sigma_{y(0)x,\infty} = V_{\delta\tau}^\T.
	\end{align*}

   Conditioning according to appropriate balance holding requires that $V_{\delta\delta} \succ 0$.  In this case, the conditional probability of $\sqrt{N}\hat{\tau}(\tilde{\mathbf{y}}(\mathbf{Z}), \bfZ) \in B$ subject to $\phi(\sqrt{N}\hat{\delta}(\mathbf{x}, \bfZ)) = 1$ limits to
    \begin{equation}\label{supp: eqn: conditional measure randomization}
        \frac{\gamma^{(d+k)}_{\bm{0}, V}\left\{(\mathbf{a}, \mathbf{b})^\T: \mathbf{a} \in B\; \wedge\; \phi(\mathbf{b})=1\right \}}{\gamma^{(k)}_{\bm{0}, V_{\delta \delta}}\left\{\mathbf{b}:  \phi(\mathbf{b})=1\right\}}
    \end{equation}
    for any Borel measurable set $B$.

    Likewise, by Proposition 1 of the main text and Lemma 4.1 of \citet{lowDim_HighDim}, the conditional probability of $\sqrt{N}\hat{\tau}(\tilde{\mathbf{y}}(\mathbf{Z}), \bfW) \in B$ subject to $\phi(\sqrt{N}\hat{\delta}(\mathbf{x}, \bfW)) = 1$ limits to
    \begin{equation}\label{supp: eqn: conditional measure permutation}
        \frac{\gamma^{(d+k)}_{\bm{0}, \tilde{V}}\left\{(\mathbf{a}, \mathbf{b})^\T: \mathbf{a} \in B \; \wedge\; \phi(\mathbf{b})=1\right \}}{\gamma^{(k)}_{\bm{0}, \tilde{V}_{\delta \delta}}\left\{\mathbf{b}:  \phi(\mathbf{b})=1\right\}}.
    \end{equation}

 The finite population central limit theorem of \citet{FiniteCLT} and Proposition 1 are statements about joint convergence in distribution for the scaled differences in means for the observed outcomes and for the covariates. Passing to convergence in distribution conditional upon $\phi(\sqrt{N}\hat{\delta}(\mathbf{x}, \bfZ)) = 1$ or $\phi(\sqrt{N}\hat{\delta}(\mathbf{x}, \bfW)) = 1$ described in \eqref{supp: eqn: conditional measure randomization} and \eqref{supp: eqn: conditional measure permutation} rests upon the continuity-set argument used in the proof of Proposition A1 in \citet{asymptoticsOfRerand}.  Condition~\ref{supp: cond: phi} guarantees that such arguments remain valid: in particular the set $M$ defined within Condition~\ref{supp: cond: phi} is of positive Lebesgue measure. This allows results for completely randomized designs to provide asymptotics when $\Omega_{CRE}$ is replaced with $\Omega$ from a general rerandomized design.

\subsection{Proof of Theorem 1}

\renewcommand{\thetheorem}{\arabic{theorem}}
\setcounter{theorem}{0}
\begin{theorem}\label{supp: thm: gaussian}
    Suppose we have either a completely randomized design or a rerandomized design with balance criterion $\phi$ satisfying Condition \ref{supp: cond: phi}. Suppose ${T}(\yobs, \mathbf{Z})$ is of the form $f_{\hat{\xi}}(\sqrt{N}\hat{\bm{\tau}})$ for some $f_\xi$ and $\hat{\xi}$ satisfying Conditions \ref{supp: cond: f} and \ref{supp: cond: xi}. Suppose further that we employ a covariance estimator $\hat{V}$ satisfying Condition \ref{supp: cond: var} when forming the prepivoted test statistic $G(\yobs, \bfZ)$. Then, under Neyman's null $H_N: \bar{\bm{\tau}} = 0$ and under Assumptions \ref{supp: asm: non-degen sampling limit} - \ref{supp: asm: worst case vanishes}, $G(\mathbf{y}(\mathbf{Z}), \mathbf{Z})$ converges in distribution to a random variable $\tilde{U}$ taking values in $[0,1]$ satisfying
    \begin{align*}
        \mathbb{P}(\tilde{U} \leq t) \geq t,
    \end{align*}
    for all $t \in [0,1]$. Furthermore, strengthening Assumption \ref{supp: asm: worst case vanishes} to Assumption \ref{supp: asm: bounded fourth moment}, the distribution $\hatPerm_G(t)$ satisfies
    \begin{align*}
    \hatPerm_G(t)\overset{p}{\rightarrow} t
    \end{align*}
    for all $t\in [0,1]$.
\end{theorem}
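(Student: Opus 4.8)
The plan is to treat the two assertions in turn, in each case reducing a statement about randomization distributions to one about fixed, possibly conditioned, Gaussian laws by combining the finite population central limit theorem with the joint‑continuity Lemmas~\ref{supp: lem: gaussian measure continuous} and \ref{supp: lem: prepivoting is a continuous function} and Slutsky‑type arguments. For the first assertion I would proceed as follows. Under Assumptions~\ref{supp: asm: non-degen sampling limit}--\ref{supp: asm: worst case vanishes}, the finite population central limit theorem of \citet{FiniteCLT}, together with Condition~\ref{supp: cond: phi} and the continuity‑set argument recalled in the remark on rerandomized designs, gives that under $H_N$ the law of $\sqrt{N}\hat{\bm{\tau}}$ converges to that of $\mathbf{A}_1 \mid \phi(\mathbf{B}_1) = 1$, where $(\mathbf{A}_1, \mathbf{B}_1)^\T \sim \mathcal{N}(\bm{0}, V)$; here one uses that under $H_N$ one has $\tilde{\mathbf{y}}(\mathbf{Z}) = \yobs$. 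Since $\hat{\xi}(\yobs, \mathbf{Z}) \overset{p}{\rightarrow} \xi$ by Condition~\ref{supp: cond: xi} and $f$ is jointly continuous by Condition~\ref{supp: cond: f}, the continuous mapping theorem and Slutsky's theorem give $T(\yobs, \mathbf{Z}) = f_{\hat{\xi}}(\sqrt{N}\hat{\bm{\tau}}) \overset{d}{\rightarrow} \mathbf{T} := f_\xi(\mathbf{A}_1 \mid \phi(\mathbf{B}_1) = 1)$. Setting $\bar{\bar{V}} = V + \mathrm{diag}(\Delta, \bm{0}_{k,k})$ with $\Delta$ as in Condition~\ref{supp: cond: var} and noting that $\bar{\bar{V}}$ differs from $V$ only in its $\tau\tau$ block so $\bar{\bar{V}}_{\delta\delta} = V_{\delta\delta}$ and $\bar{\bar{V}}_{\tau\delta} = V_{\tau\delta}$, Lemma~\ref{prop:anderson} shows that $S := f_\xi(\mathbf{A}_2 \mid \phi(\mathbf{B}_2) = 1)$, with $(\mathbf{A}_2, \mathbf{B}_2)^\T \sim \mathcal{N}(\bm{0}, \bar{\bar{V}})$, stochastically dominates $\mathbf{T}$. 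Finally, writing $G(\yobs, \mathbf{Z}) = h(\hat{V}, \hat{\xi}, T(\yobs, \mathbf{Z}))$ in the notation of Lemma~\ref{supp: lem: prepivoting is a continuous function}, with $\hat{V} \overset{p}{\rightarrow} \bar{\bar{V}}$ (Condition~\ref{supp: cond: var}) and $\hat{\xi} \overset{p}{\rightarrow} \xi$, the joint continuity of $h$ and another application of the continuous mapping theorem and Slutsky's theorem yield $G(\yobs, \mathbf{Z}) \overset{d}{\rightarrow} \bar{\bar{F}}(\mathbf{T})$, where $\bar{\bar{F}} = h(\bar{\bar{V}}, \xi, \cdot)$ is exactly the distribution function of $S$; Lemma~\ref{prop:unif} then gives $\mathbb{P}(\bar{\bar{F}}(\mathbf{T}) \leq t) \geq t$ for all $t$, which is the claim with $\tilde{U} := \bar{\bar{F}}(\mathbf{T})$.

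For the second assertion, which additionally uses the stronger moment bound Assumption~\ref{supp: asm: bounded fourth moment} (needed for Proposition~\ref{prop:permute}), I would run the same argument for an index $\mathbf{W}$ drawn uniformly from $\Omega$ independently of $\mathbf{Z}$, but now working with weak convergence in probability of the conditional‑on‑$\mathbf{Z}$ laws. Proposition~\ref{prop:permute}, Condition~\ref{supp: cond: phi}, and the conditioning argument give that the conditional law of $\sqrt{N}\hat{\tau}(\yobs, \mathbf{W}) \mid \mathbf{Z}$ converges weakly in probability to that of $\tilde{\mathbf{A}} \mid \phi(\tilde{\mathbf{B}}) = 1$ with $(\tilde{\mathbf{A}}, \tilde{\mathbf{B}})^\T \sim \mathcal{N}(\bm{0}, \tilde{V})$. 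Using $\hat{\xi}(\yobs, \mathbf{W}) \overset{p}{\rightarrow} \tilde{\xi}$ and $\hat{V}(\yobs, \mathbf{W}) \overset{p}{\rightarrow} \tilde{V}$ (Conditions~\ref{supp: cond: xi} and \ref{supp: cond: var}), the joint continuity of $f$ and $h$, and the randomization‑distribution versions of the continuous mapping and Slutsky theorems \citep[Lemmas A5--A6]{chungRomano16}, one identifies $\hatPerm_{G}(t)$ --- the conditional distribution function at $t$ of $G(\yobs, \mathbf{W}) = h(\hat{V}(\yobs, \mathbf{W}), \hat{\xi}(\yobs, \mathbf{W}), T(\yobs, \mathbf{W}))$ given $\mathbf{Z}$ --- as converging in probability to the value at $t$ of the distribution function of $\tilde{F}_T(\tilde{\mathbf{T}})$, where $\tilde{\mathbf{T}} := f_{\tilde{\xi}}(\tilde{\mathbf{A}} \mid \phi(\tilde{\mathbf{B}}) = 1)$ and $\tilde{F}_T = h(\tilde{V}, \tilde{\xi}, \cdot)$ is its own distribution function. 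The crux is that here, unlike in the first part, the covariance driving the pushforward and the covariance governing the reference variable coincide in the limit (both equal $\tilde{V}$), so the asymptotic transformation is the ordinary probability integral transform, $\tilde{F}_T(\tilde{\mathbf{T}})$ is standard uniform, and hence $\hatPerm_{G}(t) \overset{p}{\rightarrow} t$ for all $t \in [0,1]$.

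I expect the main obstacle to be careful bookkeeping rather than a single hard idea: carrying the conditioning on $\{\phi(\sqrt{N}\hat{\bm{\delta}}) = 1\}$ correctly through every weak‑convergence step (the central limit theorems deliver joint convergence of the scaled difference in means for outcomes \emph{and} covariates, and passing to the conditional law requires the continuity‑set argument of \citet{asymptoticsOfRerand} together with the fact that $M$ has positive Lebesgue measure under Condition~\ref{supp: cond: phi}), and, in the $\mathbf{W}$‑analysis, handling weak convergence in probability of random measures rather than ordinary weak convergence. A secondary point requiring care is that passing $\hat{\xi}$ and $\hat{V}$ to their probability limits inside the nonlinear maps $f$ and $h$ must be justified through the joint continuity established in Lemmas~\ref{supp: lem: gaussian measure continuous} and \ref{supp: lem: prepivoting is a continuous function} together with the Chung--Romano continuous‑mapping/Slutsky lemmas, and that Lemma~\ref{prop:unif} and the probability integral transform are invoked for the limiting pushforwards $\mathbf{T}$, $S$, and $\tilde{\mathbf{T}}$, which are continuous random variables because the relevant conditioned Gaussians are non‑degenerate; should a pushforward possess atoms, one replaces ordinary inverses by quantile functions, which leaves the conclusions intact.
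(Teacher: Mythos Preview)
Your proposal is correct and follows essentially the same approach as the paper's proof: the first part combines the finite population central limit theorem, the conditioning argument for rerandomized designs, Conditions~\ref{supp: cond: xi}--\ref{supp: cond: var}, the joint continuity of $h$ from Lemma~\ref{supp: lem: prepivoting is a continuous function}, and then Anderson's theorem together with Lemma~\ref{prop:unif}; the second part runs the analogous argument with $\mathbf{W}$ in place of $\mathbf{Z}$ and observes that the covariance mismatch vanishes, so the limiting transformation is the exact probability integral transform. The only minor technical difference is that where you invoke the Chung--Romano continuous mapping and Slutsky lemmas for randomization distributions (as in the main text's proof sketch), the appendix proof instead uses the equivalent ``two independent copies'' characterization of weak convergence in probability via \citet[Lemma~4.1]{lowDim_HighDim}, passing to the joint law of $(G(\yobs,\bfW), G(\yobs,\bfW'))$ for independent $\bfW,\bfW'$ and then back again; both routes reach the same conclusion.
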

\begin{proof}[Proof of Theorem~\ref{supp: thm: gaussian}]
    A completely randomized experiment can be viewed as a rerandomized experiment for which $\phi(\mathbf{b}) = 1$ for all $\mathbf{b} \in \R^{k}$; this $\phi$ satisfies Condition~\ref{supp: cond: phi}.  As such, the proof below proceeds with general $\phi$ satisfying Condition~\ref{supp: cond: phi} -- making no distinction between rerandomized designs and completely randomized design.




    First, we focus on the randomization distribution of the prepivoted test statistic; in other words, we examine the limiting distribution of $G(\mathbf{y}(\mathbf{Z}), \mathbf{Z})$ under $H_{N}$.  By the finite population central limit theorem of \citet{FiniteCLT} in a completely randomized design or a rerandomized design with $\phi$ satisfying Condition~\ref{supp: cond: phi} the $\sqrt{N}$-scaled difference in means, $\sqrt{N}(\bm{\hat{\tau}}-\bm{\bar{\tau}}, \hat{\bm{\delta}})^{T}$, converges in distribution to $\Normal{0}{V}$ with
    \begin{align*}
	    V\;\; &= \begin{pmatrix} V_{\tau \tau} & V_{\tau \delta}\\ V_{\delta \tau} & V_{\delta \delta}\end{pmatrix};\\
		V_{\tau\tau} & = p^{-1}\Sigma_{y(1),\infty} + (1 - p)^{-1}\Sigma_{y(0),\infty} - \Sigma_{\tau,\infty};       \\
		V_{\delta\delta}       & = \{p(1 - p)\}^{-1}\Sigma_{x,\infty};\\
		V_{\tau \delta}   & = p^{-1}\Sigma_{y(1)x,\infty} + (1-p)^{-1}\Sigma_{y(0)x,\infty} = V_{\delta\tau}^\T.
	\end{align*}

	Furthermore, by Condition \ref{supp: cond: phi} and Corollary A1 of \citet{asymptoticsOfRerand}, we have that for $\bfZ$ instead uniform over $\Omega$ (accounting for the rerandomized design),
	$\sqrt{N}(\hat{\bm{\tau}}-\bm{\bar{\tau}}) \overset{d}{\rightarrow} \mathbf{C}$, where $\mathbf{C}$ follows the distribution of $\mathbf{A}\mid \phi(\mathbf{B}) =1$ for $\mathbf{A}\in \mathbb{R}^d$, $\mathbf{B}\in \mathbb{R}^k$, and $(\mathbf{A}, \mathbf{B})^\T$ multivariate Gaussian with covariance $V$ and mean zero.

	By Condition~\ref{supp: cond: xi} $\hat{\xi}(\tilde{\mathbf{y}}(\mathbf{Z}), \mathbf{Z}) \convP[] \xi$ and by Condition~\ref{supp: cond: var}
	$$\hat{V}(\tilde{\mathbf{y}}(\mathbf{Z}), \mathbf{Z}) \convP[] V + \begin{pmatrix}\Delta & 0_{d,k}\\  0_{k,d} & 0_{k,k} \end{pmatrix} =: \bar{\bar{V}}.$$

	Leveraging Lemma~\ref{supp: lem: prepivoting is a continuous function} and the continuous mapping theorem, under $H_{N}$
	\begin{equation*}
	    h\left(\hat{V}({\mathbf{y}}(\mathbf{Z}), \mathbf{Z}), \hat{\xi}({\mathbf{y}}(\mathbf{Z}), \mathbf{Z}), \sqrt{N}\tauhat \right) \convD[] h\left(V + \begin{pmatrix}\Delta & 0_{d,k}\\  0_{k,d} & 0_{k,k} \end{pmatrix}, \xi, \mathbf{C} \right)
	\end{equation*}
	where $\mathbf{C}$ distributed as before. Unwinding the notation of $h(\cdot, \cdot, \cdot)$ gives that $G(\mathbf{y}(\mathbf{Z}), \mathbf{Z})$ converges in distribution to
	\begin{equation}\label{supp: eqn: randomization limit of prepiv}
	    \frac{\gamma^{(d+k)}_{\bm{0}, \bar{\bar{V}}}\left\{(\mathbf{a}, \mathbf{b})^\T: f_{\xi}(\mathbf{a}) \leq f_{\xi}(\mathbf{C})\; \wedge\; \phi(\mathbf{b})=1\right \}}{\gamma^{(k)}_{\bm{0}, \bar{\bar{V}}_{\delta \delta}}\left\{\mathbf{b}:  \phi(\mathbf{b})=1\right\}}.
	\end{equation}

    If we had known to plug in $V$ for $\hat{V}$, \eqref{supp: eqn: randomization limit of prepiv} would exactly amount to applying the $f_{\xi}$-pushforward of the Gaussian measure $\gamma^{(d+k)}_{\bm{0}, V}$ conditional on $\phi(\bm{b}) = 1$, which would result in a uniform random variable since this is just the asymptotic probability integral transform for $T(\yobs, \bfZ)$ given that $\phi(\sqrt{N}\bm{\hat{\delta}}) = 1$.  However, we do not know $V$ and instead estimate it conservatively using a $\hat{V}$ that satisfies Condition~\ref{supp: cond: var}; this results in the discrepancy between the covariance of $\mathbf{C}$ versus the covariance used in the Gaussian measure $\gamma^{(d+k)}_{\bm{0}, \bar{\bar{V}}}$ in \eqref{supp: eqn: randomization limit of prepiv}.  Consequently, \eqref{supp: eqn: randomization limit of prepiv} amounts to $f_{\xi}$-pushforward of the Gaussian measure $\gamma^{(d+k)}_{\bm{0}, \bar{\bar{V}}}$ in the numerator (the denominator stays the same in both cases since the bottom right block of both $\bar{\bar{V}}$ and $V$ is $V_{\delta\delta}$).  Since $\bar{\bar{V}} \succeq V$, it follows by Lemma 1 of the main text (and Anderson's theorem more generally) that the numerator of \eqref{supp: eqn: randomization limit of prepiv} is no larger than the numerator of \eqref{supp: eqn: randomization limit of prepiv} with $\bar{\bar{V}}$ replaced by $V$.  Then, since applying the $f_{\xi}$-pushforward of the Gaussian measure $\gamma^{(d+k)}_{\bm{0}, V}$ conditional on $\phi(\bm{b}) = 1$ results in a uniform random variable, it follows that \eqref{supp: eqn: randomization limit of prepiv} is stochastically dominated by a uniform random variable from Lemma 2 in the text.  In other words, $G(\mathbf{y}(\mathbf{Z}), \mathbf{Z})$ converges in distribution to a random variable $\tilde{U}$ taking values in $[0,1]$ satisfying $\mathbb{P}(\tilde{U} \leq t) \geq t$ for all $t \in [0,1]$.

    Now we turn our attention to the limiting value of $\hatPerm_{G}(t)$ for any $t$.  Relying upon the result of Proposition 1 in the main text -- which requires Assumptions \SLLNAssumptionsSupp\ -- in a completely randomized design the distribution of $\{\sqrt{N}\hat{\tau}(\tilde{\mathbf{y}}(\mathbf{Z}), \bfW),\sqrt{N}\hat{\delta}(\mathbf{x}, \bfW)\}^\T \mid \mathbf{Z}$ converges weakly in probability to a multivariate Gaussian measure, with mean zero and covariance
    \begin{align*}
    \tilde{V} = \begin{pmatrix} \tilde{V}_{\tau \tau} & \tilde{V}_{\tau \delta}\\ \tilde{V}_{\delta \tau} & \tilde{V}_{\delta \delta}\end{pmatrix}.
    \end{align*}

    By \citet[Lemma 4.1]{lowDim_HighDim}, this is equivalent to
    \begin{equation}\label{supp: eqn: two copies go to normals}
         \begin{bmatrix}\{\sqrt{N}\hat{\tau}(\tilde{\mathbf{y}}(\mathbf{Z}), \bfW),\sqrt{N}\hat{\delta}(\mathbf{x}, \bfW)\}^\T \\
         \{\sqrt{N}\hat{\tau}(\tilde{\mathbf{y}}(\mathbf{Z}), \bfW'),\sqrt{N}\hat{\delta}(\mathbf{x}, \mathbf{W'})\}^\T \end{bmatrix}\convD[] \{(\tilde{\mathbf{A}},\tilde{\mathbf{B}}), (\tilde{\mathbf{A}}',\tilde{\mathbf{B}'})\}^\T
    \end{equation}
    where $\bfZ$, $\bfW$, and $\bfW'$ are independent and uniformly distributed over $\Omega_{CRE}$ and $(\tilde{\mathbf{A}},\tilde{\mathbf{B}})^\T$ and $(\tilde{\mathbf{A}}',\tilde{\mathbf{B}'})^\T$ are independent and identically distributed multivariate Gaussians with mean zero and covariance $\tilde{V}$. By the conditions on $\phi$ outlined in Condition \ref{supp: cond: phi}, we further have that for $\bfZ$, $\bfW$, and $\bfW'$ independently drawn from $\Omega$ (now accounting for the restrictions imposed by rerandomization),

    \begin{equation}\label{supp: eqn: two copies go to cond normals}
         \begin{bmatrix}\sqrt{N}\hat{\tau}(\tilde{\mathbf{y}}(\mathbf{Z}), \bfW) \\
         \sqrt{N}\hat{\tau}(\tilde{\mathbf{y}}(\mathbf{Z}), \bfW) \end{bmatrix}\convD[] ({\mathbf{D}}, {\mathbf{D}}'),
    \end{equation}
    where $({\mathbf{D}}, \mathbf{D}')$ are independent and identically distributed from the conditional distribution of $\tilde{\mathbf{A}}\mid \phi(\tilde{\mathbf{B}}) = 1$.

    By Conditions~\ref{supp: cond: xi} and \ref{supp: cond: var}
    \begin{equation}\label{supp: eqn: permutation distribution constants converge}
        \begin{bmatrix}
            \hat{\xi}(\tilde{\mathbf{y}}(\mathbf{Z}), \bfW)\\
            \hat{V}(\tilde{\mathbf{y}}(\mathbf{Z}), \bfW) \\
            \hat{\xi}(\tilde{\mathbf{y}}(\mathbf{Z}), \bfW')\\
            \hat{V}(\tilde{\mathbf{y}}(\mathbf{Z}), \bfW')
        \end{bmatrix} \convP[] \begin{bmatrix}
                                \tilde{\xi}\\
                                \tilde{V} \\
                                \tilde{\xi}\\
                                \tilde{V}
                            \end{bmatrix}.
    \end{equation}
    Moreover, \eqref{supp: eqn: two copies go to cond normals} and \eqref{supp: eqn: permutation distribution constants converge} hold jointly.  Thus, the continuous mapping theorem implies that
    \begin{gather}
        \begin{bmatrix} h\left(\hat{V}(\tilde{\mathbf{y}}(\mathbf{Z}), \bfW), \hat{\xi}(\tilde{\mathbf{y}}(\mathbf{Z}), \bfW), \sqrt{N}\hat{\tau}(\tilde{\mathbf{y}}(\mathbf{Z}), \bfW)\right) \\
         h\left(\hat{V}(\tilde{\mathbf{y}}(\mathbf{Z}), \bfW'), \hat{\xi}(\yobs, \bfW'), \sqrt{N}\hat{\tau}(\tilde{\mathbf{y}}(\mathbf{Z}), \bfW')\right)
         \end{bmatrix}\nonumber \\
         \verticalConvD \label{supp: eqn: joint permutation result}\\
         \begin{bmatrix} h\left(\tilde{V}, \tilde{\xi}, \mathbf{D}\right) \\
        h\left(\tilde{V}, \tilde{\xi}, \mathbf{D}' \right)
         \end{bmatrix}\nonumber
    \end{gather}
    where $\mathbf{D}$ and $\mathbf{D}'$ are distributed as before.

    Recall that under the weak null, $\tilde{\mathbf{y}}(\mathbf{Z}) = \mathbf{y}(\mathbf{Z})$ and $h\left(\hat{V}(\tilde{\mathbf{y}}(\mathbf{Z}), \bfW), \hat{\xi}(\tilde{\mathbf{y}}(\mathbf{Z}), \bfW), \sqrt{N}\hat{\tau}(\tilde{\mathbf{y}}(\mathbf{Z}), \bfW)\right)$ is precisely $G(\yobs, \bfW)$ as previously defined. Observe that $H(\tilde{V}, \tilde{\xi}, \mathbf{D})$ takes the form

    \begin{equation}\label{supp: eqn: permutation limit of prepiv}
        h\left(\tilde{V}, \tilde{\xi}, \mathbf{D} \right) = \frac{\gamma^{(d+k)}_{\bm{0}, \tilde{V}}\left\{(\mathbf{a}, \mathbf{b})^\T: f_{\tilde{\xi}}(\mathbf{a}) \leq f_{\tilde{\xi}}(\mathbf{D})\; \wedge\; \phi(\mathbf{b})=1\right \}}{\gamma^{(k)}_{\bm{0}, \tilde{V}_{\delta \delta}}\left\{\mathbf{b}:  \phi(\mathbf{b})=1\right\}}.
    \end{equation}

    The logic applied to \eqref{supp: eqn: randomization limit of prepiv} applies similarly to \eqref{supp: eqn: permutation limit of prepiv} except for the fact that the mismatch in the covariance of $\mathbf{C}$ and $\gamma^{(d+k)}_{\bm{0}, \bar{\bar{V}}}$ of \eqref{supp: eqn: randomization limit of prepiv} no longer exists in \eqref{supp: eqn: permutation limit of prepiv} since $\mathbf{D}$ is derived from $(\tilde{\mathbf{A}},\tilde{\mathbf{B}})^\T \sim \Normal{0}{\tilde{V}}$ and the Gaussian measure $\gamma^{(d+k)}_{\bm{0}, \tilde{V}}$ is applied.  As remarked earlier, since the internal covariance matches the external covariance $h\left(\tilde{V}, \tilde{\xi}, \mathbf{D} \right)$ is uniformly distributed over $[0, 1]$.  Applying Lemma 4.1 of \citet{lowDim_HighDim} to \eqref{supp: eqn: joint permutation result} thus implies that $\hatPerm_{G}$ converges weakly in probability to  $\text{Unif}[0, 1]$.  In other words, $\hatPerm_G(t)\overset{p}{\rightarrow} t$ for all $t\in [0,1]$.
\end{proof}

\subsection{Theorem 2}

Theorem 2 reduces to the proof of Theorem~\ref{supp: thm: gaussian} by recognizing the $r_{i}$ and $\tilde{r}_{i}$ as potential outcomes satisfying the required assumptions.  The asymptotically vanishing factor $o_{P}(1)$ in the definitions of $ \sqrt{N}\{\breve{\tau}(\mathbf{y}(\mathbf{Z}), \mathbf{Z}) - \bar{\bm{\tau}}\}$ and $\sqrt{N}\breve{\tau}(\mathbf{y}(\mathbf{Z}) - \mathbf{Z}\bar{\bm{\tau}}^\T, \bfW)$ plays no role in the analysis of their limiting distributions, thereby allowing for application of the same proofs used to show Proposition 1 and Theorem~\ref{supp: thm: gaussian}.

\section{Gaussian prepivoting after regression adjustment}\label{sec: reg adj}
\subsection{Regression adjustment in completely randomized experiments}
		In completely randomized experiments with covariate information, a common practice is to use regression-based estimators for treatment effects to improve efficiency.  Assume that $k$ is fixed and smaller than $N$, and let the potential outcomes be univariate. Define $\tauhat_{reg}(\yobs, \mathbf{Z})$ to be the estimated coefficient on $Z_{i}$ in an ordinary least squares regression of $y_i(Z_{i})$ on $Z_{i}$, $(\mathbf{x}_{i} - \bar{\mathbf{x}})$, and $Z_i(\mathbf{x}_i-\bar{\mathbf{x}})$. \citet{agnosticRegAdj} shows that under suitable regularity conditions, $\tauhat_{reg}$ is $\sqrt{N}$-consistent for $\bar{{\tau}}$ and has an asymptotic variance that is no larger than that of $\hat{{\tau}}$. Importantly, this result holds true \textit{without} assuming that the linear model inspiring $\tauhat_{reg}$ is actually true.



		Let
		$$Q_{1} = \lim_{N \rightarrow \infty}\left(\sum_{ i = 1}^{N}(\mathbf{x}_{i} - \overline{\mathbf{x}})(\mathbf{x}_{i} - \overline{\mathbf{x}})^{\T}\right)^{-1}\left(\sum_{i = 1}^{N}(\mathbf{x}_{i} - \overline{\mathbf{x}})^\T ({y}_i(1) - \overline{y}(1))\right)$$
		be the limit of the OLS slopes for potential outcome under treatment regressed upon covariates, and define $Q_{0}$ analogously for the potential outcomes under control.  The population level treatment residuals based upon the limiting slopes are then defined as
		\begin{align*}\varepsilon_{i}(1) &=
		({y}_i(1) - \overline{y}(1)) - (\mathbf{x}_{i} - \overline{\mathbf{x}})^{\T}Q_{1};\\
		\varepsilon_i(0) &= ({y}_i(0) - \overline{y}(0)) - (\mathbf{x}_{i} - \overline{\mathbf{x}})^{\T}Q_{0}.\end{align*}
		  Let $\tilde{Q} = pQ_{1} + (1 - p)Q_{0}$ and further define
		\begin{align}\label{eqn: mixed slope residuals}
    	    \tilde{\varepsilon}_{i}(1) &=
                            	({y}_i(1) - \overline{y}(1)) - (\mathbf{x}_{i} - \overline{\mathbf{x}})^{\T}\tilde{Q}; \\
                            	\tilde{\varepsilon}_i(0) &= ({y}_i(0) - \overline{y}(0)) - (\mathbf{x}_{i} - \overline{\mathbf{x}})^{\T}\tilde{Q}.\nonumber
    	\end{align}
\setcounter{proposition}{1}
 \begin{proposition}\label{supp: prop: reg adj vs residuals}
 Suppose Assumption \ref{supp: asm: non-degen sampling limit} holds, and suppose further that Assumptions \ref{supp: asm: means and covs stabilize} and \ref{supp: asm: bounded fourth moment}  hold for the potential outcomes and covariates. Then,
     \begin{align*}
         \sqrt{N}\left\{\tauhat_{reg}(\yobs, \mathbf{Z}) - \bar{\bm{\tau}}\right\} &= \sqrt{N}\left(\frac{1}{n_1}\sum_{i=1}^NZ_i\varepsilon_i(Z_i) - \frac{1}{n_0}\sum_{i=1}^N(1-Z_i)\varepsilon_i(Z_i)\right) + o_p(1)\\
         \sqrt{N}\left\{\tauhat_{reg}(\yobs - \mathbf{Z}\bar{\bm{\tau}}^\T, \mathbf{W})\right\} &= \sqrt{N}\left(\frac{1}{n_1}\sum_{i=1}^NW_i\tilde{\varepsilon}_i(Z_i) - \frac{1}{n_0}\sum_{i=1}^N(1-W_i)\tilde{\varepsilon}_i(Z_i)\right) + o_p(1)
     \end{align*}
 \end{proposition}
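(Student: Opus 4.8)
The plan is to derive both asymptotic‑linearity statements from a single exact algebraic identity for Lin's interaction‑adjusted estimator, together with finite‑population laws of large numbers; the second claim will then follow by applying the same argument to a ``pseudo‑experiment'' in which every unit's two potential outcomes coincide. First I would record compact notation: write $\varepsilon(\mathbf{Z})$ for the $N$-vector with $i$th entry $\varepsilon_i(Z_i)$ and $\tilde{\varepsilon}(\mathbf{Z})$ for the vector with $i$th entry $\tilde{\varepsilon}_i(Z_i)$, so the two claims read $\sqrt{N}\{\hat{\tau}_{reg}(\yobs,\mathbf{Z})-\bar{\bm{\tau}}\}=\sqrt{N}\hat{\tau}(\varepsilon(\mathbf{Z}),\mathbf{Z})+o_p(1)$ and $\sqrt{N}\hat{\tau}_{reg}(\tilde{\mathbf{y}}(\mathbf{Z}),\mathbf{W})=\sqrt{N}\hat{\tau}(\tilde{\varepsilon}(\mathbf{Z}),\mathbf{W})+o_p(1)$. \textbf{Step 1 (an exact decomposition of $\hat{\tau}_{reg}$).} For any response vector $\mathbf{v}\in\R^{N}$ and any allocation $\mathbf{w}$ with $\sum_i w_i=n_1$, fitting $v_i$ on $1,w_i,(\mathbf{x}_i-\bar{\mathbf{x}}),w_i(\mathbf{x}_i-\bar{\mathbf{x}})$ by OLS is equivalent to running separate OLS fits of $v$ on $(1,\mathbf{x})$ within each arm, and the coefficient on $w_i$ equals the difference of the two within‑arm intercepts evaluated at $\mathbf{x}=\bar{\mathbf{x}}$. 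Letting $\hat{Q}_z(\mathbf{v},\mathbf{w})$ denote the within‑arm‑$z$ OLS slope and using $\bar{\mathbf{x}}_1-\bar{\mathbf{x}}=\tfrac{n_0}{N}\hat{\delta}(\mathbf{x},\mathbf{w})$ and $\bar{\mathbf{x}}_0-\bar{\mathbf{x}}=-\tfrac{n_1}{N}\hat{\delta}(\mathbf{x},\mathbf{w})$, a short computation gives
\[
\hat{\tau}_{reg}(\mathbf{v},\mathbf{w})=\hat{\tau}(\mathbf{v},\mathbf{w})-\hat{\delta}(\mathbf{x},\mathbf{w})^{\T}\Big(\tfrac{n_0}{N}\hat{Q}_1(\mathbf{v},\mathbf{w})+\tfrac{n_1}{N}\hat{Q}_0(\mathbf{v},\mathbf{w})\Big),
\]
which (in essence the decomposition of \citet{agnosticRegAdj}) is the common engine for both parts.

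\textbf{Step 2 (first identity).} I would apply Step~1 with $\mathbf{v}=\yobs$, $\mathbf{w}=\mathbf{Z}$. The slope $\hat{Q}_z(\yobs,\mathbf{Z})$ is the within‑arm OLS slope of the observed outcomes on covariates, and by a finite‑population weak law of large numbers (whose moment hypotheses are supplied by Assumption~\ref{supp: asm: bounded fourth moment} via Cauchy--Schwarz), together with $n_z/N\to\{p,1-p\}$ and nonsingularity of $\Sigma_{x,\infty}$, one gets $\hat{Q}_z(\yobs,\mathbf{Z})\overset{p}{\rightarrow}Q_z$; meanwhile $\sqrt{N}\hat{\delta}(\mathbf{x},\mathbf{Z})=O_p(1)$ by the finite‑population central limit theorem of \citet{FiniteCLT}. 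A direct expansion (using $\tfrac1N\sum_i(\mathbf{x}_i-\bar{\mathbf{x}})=0$) shows $\hat{\tau}(\varepsilon(\mathbf{Z}),\mathbf{Z})=\hat{\tau}(\yobs,\mathbf{Z})-\bar{\bm{\tau}}-\hat{\delta}(\mathbf{x},\mathbf{Z})^{\T}(\tfrac{n_0}{N}Q_1+\tfrac{n_1}{N}Q_0)$; subtracting this from Step~1 evaluated at $(\yobs,\mathbf{Z})$ yields $\sqrt{N}\{\hat{\tau}_{reg}(\yobs,\mathbf{Z})-\bar{\bm{\tau}}\}-\sqrt{N}\hat{\tau}(\varepsilon(\mathbf{Z}),\mathbf{Z})=-\tfrac{n_0}{N}\sqrt{N}\hat{\delta}^{\T}(\hat{Q}_1-Q_1)-\tfrac{n_1}{N}\sqrt{N}\hat{\delta}^{\T}(\hat{Q}_0-Q_0)=O_p(1)\cdot o_p(1)=o_p(1)$. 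This part is essentially a recasting of existing regression‑adjustment results.

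\textbf{Step 3 (second identity).} I would apply Step~1 with $\mathbf{v}=\tilde{\mathbf{y}}(\mathbf{Z})$ (a fixed vector once $\mathbf{Z}$ is realized) and $\mathbf{w}=\mathbf{W}$, and establish $\hat{Q}_z(\tilde{\mathbf{y}}(\mathbf{Z}),\mathbf{W})\overset{p}{\rightarrow}\tilde{Q}$ in two stages. Conditionally on $\mathbf{Z}$, the restriction of $\mathbf{W}$ to arm $z$ is a simple random sample, so each within‑arm sample moment equals the corresponding full‑sample moment of $\{(\mathbf{x}_i,\tilde{y}_i(Z_i))\}_{i=1}^{N}$ up to an $O_p(n_z^{-1/2})$ error whose constant is \emph{uniform in} $\mathbf{Z}$ (because $\tilde{y}_i(Z_i)^4\le 8\{\max(y_i(0)^4,y_i(1)^4)+\bar{\bm{\tau}}^4\}$ is bounded uniformly under Assumption~\ref{supp: asm: bounded fourth moment}); and the full‑sample slope of $\tilde{y}_i(Z_i)=Z_i(y_i(1)-\bar{\bm{\tau}})+(1-Z_i)y_i(0)$ on $\mathbf{x}_i$ converges in probability --- by the finite‑population law applied to the original allocation $\mathbf{Z}$, with $\tfrac1N\sum_iZ_i(\mathbf{x}_i-\bar{\mathbf{x}})y_i(1)\overset{p}{\rightarrow}p\,\Sigma_{y(1)x,\infty}^{\T}$, its control analogue, and the $\bar{\bm{\tau}}$ cross‑term vanishing because $\tfrac1N\sum_iZ_i(\mathbf{x}_i-\bar{\mathbf{x}})\overset{p}{\rightarrow}0$ --- to $\Sigma_{x,\infty}^{-1}\{p\,\Sigma_{y(1)x,\infty}^{\T}+(1-p)\Sigma_{y(0)x,\infty}^{\T}\}=pQ_1+(1-p)Q_0=\tilde{Q}$. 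Chaining the two stages gives $\hat{Q}_z(\tilde{\mathbf{y}}(\mathbf{Z}),\mathbf{W})\overset{p}{\rightarrow}\tilde{Q}$, hence $\tfrac{n_0}{N}\hat{Q}_1+\tfrac{n_1}{N}\hat{Q}_0\overset{p}{\rightarrow}\tilde{Q}$; since $\sqrt{N}\hat{\delta}(\mathbf{x},\mathbf{W})=O_p(1)$, Step~1 gives $\sqrt{N}\hat{\tau}_{reg}(\tilde{\mathbf{y}}(\mathbf{Z}),\mathbf{W})=\sqrt{N}\hat{\tau}(\tilde{\mathbf{y}}(\mathbf{Z}),\mathbf{W})-\sqrt{N}\hat{\delta}(\mathbf{x},\mathbf{W})^{\T}\tilde{Q}+o_p(1)$. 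Finally, since $\tilde{y}_i(Z_i)-(y_i(Z_i)-\bar{y}(Z_i))=\bar{y}(0)$ for every $i$, the vectors with $i$th entries $\tilde{y}_i(Z_i)-(\mathbf{x}_i-\bar{\mathbf{x}})^{\T}\tilde{Q}$ and $\tilde{\varepsilon}_i(Z_i)$ differ by the constant $\bar{y}(0)$; using that $\hat{\tau}(\cdot,\mathbf{W})$ is linear in its first argument, annihilates constant vectors, and satisfies $\hat{\tau}(\{(\mathbf{x}_i-\bar{\mathbf{x}})^{\T}\tilde{Q}\}_i,\mathbf{W})=\hat{\delta}(\mathbf{x},\mathbf{W})^{\T}\tilde{Q}$, one obtains $\sqrt{N}\hat{\tau}(\tilde{\mathbf{y}}(\mathbf{Z}),\mathbf{W})-\sqrt{N}\hat{\delta}(\mathbf{x},\mathbf{W})^{\T}\tilde{Q}=\sqrt{N}\hat{\tau}(\tilde{\varepsilon}(\mathbf{Z}),\mathbf{W})$ exactly, which completes the second claim.

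The main obstacle is the two‑stage limit for $\hat{Q}_z(\tilde{\mathbf{y}}(\mathbf{Z}),\mathbf{W})$ in Step~3: one must chain a $\mathbf{W}$‑conditional law of large numbers over a data set that itself varies with $\mathbf{Z}$ with a $\mathbf{Z}$‑level law of large numbers, which requires verifying that the relevant fourth moments of $\tilde{y}_i(Z_i)$ are bounded uniformly in $\mathbf{Z}$ under Assumption~\ref{supp: asm: bounded fourth moment} so the conditional rate of convergence does not degrade; Steps~1 and~2 are essentially bookkeeping around \citet{agnosticRegAdj}. A secondary point worth flagging, since it explains why the statement features the mixed slope $\tilde{Q}$ rather than the arm‑matched slopes $Q_z$ of the first claim, is that in the permuted problem each unit's two ``potential outcomes'' are equal, so only one regression slope is relevant, and its probability limit is the mixture $pQ_1+(1-p)Q_0$ dictated by the fraction of units whose fixed value $\tilde{y}_i(Z_i)$ originated in the treated versus the control arm under the realized $\mathbf{Z}$.
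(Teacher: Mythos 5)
Your argument is correct, and its overall skeleton matches the paper's: both proofs run through the exact algebra of Lin's interaction-adjusted estimator (your Step 1 decomposition $\hat{\tau}_{reg}=\hat{\tau}-\hat{\delta}^{\T}\{\tfrac{n_0}{N}\hat{Q}_1+\tfrac{n_1}{N}\hat{Q}_0\}$ is equivalent to the residual identity of Lemma A.3 in \citet{agnosticRegAdj} that the paper invokes), consistency of the within-arm slopes, and $\sqrt{N}\hat{\delta}=O_p(1)$; your Step 2 is in fact a more explicit write-up of the substitution the paper leaves terse, and your closing algebraic identity showing $\hat{\tau}(\tilde{\mathbf{y}}(\mathbf{Z}),\mathbf{W})-\hat{\delta}(\mathbf{x},\mathbf{W})^{\T}\tilde{Q}=\hat{\tau}(\tilde{\varepsilon}(\mathbf{Z}),\mathbf{W})$ exactly (the two response vectors differing by the constant $\bar{y}(0)$) is correct. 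Where you genuinely diverge is the key technical step of the second claim, establishing $\hat{Q}_z(\tilde{\mathbf{y}}(\mathbf{Z}),\mathbf{W})\overset{p}{\rightarrow}\tilde{Q}$ jointly in $(\mathbf{Z},\mathbf{W})$: the paper conditions on $\mathbf{Z}$, uses finite-population \emph{strong} laws (this is where Assumption~3(b) enters) so that the imputed-population moments converge for almost every $\mathbf{Z}$-sequence, applies Lin's conditional weak law to each such sequence, and then converts conditional convergence to unconditional convergence via a bounded-convergence lemma; you instead chain a Chebyshev-type conditional rate that is \emph{uniform in} $\mathbf{Z}$ (justified by uniform fourth-moment control of the imputed population) with a weak law at the level of $\mathbf{Z}$. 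Your route is essentially the unconditional alternative the paper's own remark after Proposition~2(b) anticipates; it avoids the almost-sure conditioning device at the cost of having to verify the uniform conditional rate, and to make that verification airtight you should bound the \emph{centered} fourth moments of $\{\tilde{y}_i(Z_i)\}$ (around the imputed mean, which stays bounded by Assumption~2) rather than the raw fourth powers, and note the standing nonsingularity of $\Sigma_{x,\infty}$ needed to invert the within-arm Gram matrices — both minor points that hold under the stated assumptions.
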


 Let $\hat{\varepsilon}_i(\yobstilde, \mathbf{W})$ be the $i$th sample residual from a regression of $\yobstilde$ on $W_{i}$, $(\mathbf{x}_{i} - \bar{\mathbf{x}})$, and $W_i(\mathbf{x}_i-\bar{\mathbf{x}})$. Using the sample residuals $\hat{\varepsilon}_{i}(\yobstilde, \bfW)$ form the variance estimators

\begin{align*}
    \hat{\sigma}^2_{0}(\tilde{\mathbf{y}}(\bfZ), \bfW) &= \frac{1}{n_0 - 1}\sum_{i = 1}^{N}(1 - W_i)\left\{\hat{\varepsilon}_i(\tilde{\mathbf{y}}(\bfZ), \mathbf{W}) - \frac{1}{n_0}\sum_{j = 1}^{N}(1 - W_j)\hat{\varepsilon}_j(\tilde{\mathbf{y}}(\bfZ), \mathbf{W})\right\}^2\\
    \hat{\sigma}^2_{1}(\tilde{\mathbf{y}}(\bfZ), \bfW) &= \frac{1}{n_1 - 1}\sum_{i = 1}^{N}W_i\left\{\hat{\varepsilon}_i(\tilde{\mathbf{y}}(\bfZ), \mathbf{W}) - \frac{1}{n_1}\sum_{j = 1}^{N}W_j\hat{\varepsilon}_j(\tilde{\mathbf{y}}(\bfZ), \mathbf{W})\right\}^2
\end{align*}

For the $\hat{\varepsilon}_{i}(\tilde{\mathbf{y}}(\mathbf{Z}), \bfZ)$'s form $\hat{\sigma}^2_{0}(\tilde{\mathbf{y}}(\mathbf{Z}), \bfZ)$ and $\hat{\sigma}^2_{1}(\tilde{\mathbf{y}}(\mathbf{Z}), \bfZ)$ analogously but replace $\bfW$ with $\bfZ$.

Consider the variance estimators
\begin{align*}
    \hat{V}_{reg}(\tilde{\mathbf{y}}(\mathbf{Z}), \bfZ) &= \frac{N}{n_1}\hat{\sigma}^2_{1}(\tilde{\mathbf{y}}(\mathbf{Z}), \bfZ) + \frac{N}{n_0}\hat{\sigma}^2_{0}(\tilde{\mathbf{y}}(\mathbf{Z}), \bfZ)\\
    \hat{V}_{reg}(\tilde{\mathbf{y}}(\bfZ), \mathbf{W}) &=\frac{N}{n_1}\hat{\sigma}^2_{1}(\tilde{\mathbf{y}}(\bfZ), \bfW) + \frac{N}{n_0}\hat{\sigma}^2_{0}(\tilde{\mathbf{y}}(\bfZ), \bfW).
\end{align*}
Observe that $\hat{\sigma}^2_{j}({\mathbf{y}}(\mathbf{Z}), \bfZ) = \hat{\sigma}^2_{j}(\tilde{\mathbf{y}}(\mathbf{Z}), \bfZ)$ for $j=0,1$ regardless of whether or not the weak null holds, but that $\hat{\sigma}^2_{j}(\tilde{\mathbf{{y}}}(\mathbf{Z}), \bfW)\neq \hat{\sigma}^2_{j}({\mathbf{{y}}}(\mathbf{Z}), \bfW)$ unless the weak null holds.


\begin{proposition}
    $\hat{V}_{reg}(\tilde{\mathbf{y}}(\mathbf{Z}), \mathbf{W})$ satisfies Condition \ref{supp: cond: var} with $V_{\tau\tau}$ replaced by $V^{(\varepsilon)}_{\tau\tau}$ and $\tilde{V}_{\tau\tau}$ replaced by $\tilde{V}^{(\tilde{\varepsilon})}_{\tau\tau}$. The particular form of $\Delta$, the degree to which $\hat{V}_{reg}(\tilde{\mathbf{y}}(\mathbf{Z}), \mathbf{Z})$ is asymptotically conservative, is
    \begin{align*}\Delta &= \underset{N\rightarrow \infty}{\lim} \frac{1}{N}\sum_{i=1}^N(\tau_i - \bar{\tau} -  (\mathbf{x}_i - \mathbf{x})^\T(Q_1-Q_0))^2.
        \end{align*}
\end{proposition}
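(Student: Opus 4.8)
The plan is to reduce the statement to finite-population laws of large numbers for within-group residual sample variances, computed once along the observed assignment $\mathbf{Z}$ and once along an independent uniform draw $\mathbf{W}$, and then to read off $\Delta$ by an algebraic comparison of the limiting $\tau\tau$ blocks against $V^{(\varepsilon)}_{\tau\tau}$ and $\tilde V^{(\tilde\varepsilon)}_{\tau\tau}$. By Proposition~\ref{supp: prop: reg adj vs residuals}, $\hat{\tau}_{reg}$ is asymptotically linear with ``potential outcomes'' $\varepsilon_i(z)$ along $\mathbf{Z}$ and $\tilde\varepsilon_i(z)$ along an independent $\mathbf{W}$, so in the notation of \S\ref{sec: linear} the relevant limiting covariances are $V^{(\varepsilon)}$ and $\tilde V^{(\tilde\varepsilon)}$. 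The $\delta\delta$ block of each equals $\{p(1-p)\}^{-1}\Sigma_{x,\infty}$ and the $\tau\delta$ blocks are fixed linear combinations of $\Sigma_{x,\infty}$ and the $\Sigma_{y(z)x,\infty}$ (since $Q_1$, $Q_0$, $\tilde Q$ are pinned down by limiting normal equations); these are consistently estimable by standard arguments, so the entire content of the claim concerns the $\tau\tau$ slot, which is $\hat V_{reg}$.

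The first substantive step is to control the sample OLS slopes in the defining regressions. Regressing $\tilde{\mathbf{y}}(\mathbf{Z})$ --- which coincides with $\mathbf{y}(\mathbf{Z})$ under $H_N$ --- on $Z_i$, $(\mathbf{x}_i - \bar{\mathbf{x}})$ and their interaction, the within-treated and within-control slopes converge in probability to $Q_1$ and $Q_0$; this follows from the finite-population laws of large numbers available under Assumption~\ref{supp: asm: non-degen sampling limit} and with Assumptions~\ref{supp: asm: means and covs stabilize} and \ref{supp: asm: bounded fourth moment} applied to the potential outcomes and covariates, in particular to $(\mathbf{x}_i - \bar{\mathbf{x}})(\mathbf{x}_i - \bar{\mathbf{x}})^\T$ and $(\mathbf{x}_i - \bar{\mathbf{x}})y_i(z)$. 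Regressing instead on an independent $\mathbf{W}$, the treated-under-$\mathbf{W}$ group is a uniformly random size-$n_1$ subset of all $N$ units whose outcomes form the mixture $\{y_i(Z_i)\}_{i=1}^N$; since $\mathbf{Z}$ partitions the $N$ units into treated and control blocks of relative sizes converging to $p$ and $1-p$, the same subsampling laws of large numbers give that both the treated- and control-under-$\mathbf{W}$ slopes converge to $pQ_1 + (1-p)Q_0 = \tilde Q$. Consequently the $i$th sample residual behaves, up to a uniform $o_p(1)$ remainder, like $\varepsilon_i(Z_i)$ along $\mathbf{Z}$ and like $\tilde\varepsilon_i(Z_i)$ along $\mathbf{W}$, using that the population residuals average to zero within each potential-outcome arm and that $\bar y(1) = \bar y(0)$ under $H_N$ so that any additive constant washes out of a variance.

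Assembling: $\hat\sigma_1^2(\tilde{\mathbf{y}}(\mathbf{Z}), \mathbf{Z}) \overset{p}{\rightarrow} \Sigma_{\varepsilon(1),\infty}$ and $\hat\sigma_0^2(\tilde{\mathbf{y}}(\mathbf{Z}), \mathbf{Z}) \overset{p}{\rightarrow} \Sigma_{\varepsilon(0),\infty}$, while both $\hat\sigma_z^2(\tilde{\mathbf{y}}(\mathbf{Z}), \mathbf{W})$ converge to the population variance of the mixture $\{\tilde\varepsilon_i(Z_i)\}_{i=1}^N$, which (again using $\bar{\tilde\varepsilon}(1) = \bar{\tilde\varepsilon}(0) = 0$) equals $p\,\Sigma_{\tilde\varepsilon(1),\infty} + (1-p)\,\Sigma_{\tilde\varepsilon(0),\infty}$. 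Plugging into $\hat V_{reg} = \tfrac{N}{n_1}\hat\sigma_1^2 + \tfrac{N}{n_0}\hat\sigma_0^2$ gives
\begin{align*}
\hat V_{reg}(\tilde{\mathbf{y}}(\mathbf{Z}), \mathbf{Z}) &\overset{p}{\rightarrow} p^{-1}\Sigma_{\varepsilon(1),\infty} + (1-p)^{-1}\Sigma_{\varepsilon(0),\infty},\\
\hat V_{reg}(\tilde{\mathbf{y}}(\mathbf{Z}), \mathbf{W}) &\overset{p}{\rightarrow} (1-p)^{-1}\Sigma_{\tilde\varepsilon(1),\infty} + p^{-1}\Sigma_{\tilde\varepsilon(0),\infty} = \tilde V^{(\tilde\varepsilon)}_{\tau\tau},
\end{align*}
the second line matching $\tilde V^{(\tilde\varepsilon)}_{\tau\tau}$ exactly, so the $\mathbf{W}$-version of Condition~\ref{supp: cond: var} holds with no correction. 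Comparing the first line with $V^{(\varepsilon)}_{\tau\tau} = p^{-1}\Sigma_{\varepsilon(1),\infty} + (1-p)^{-1}\Sigma_{\varepsilon(0),\infty} - \Sigma_{\tau_\varepsilon,\infty}$ forces $\Delta = \Sigma_{\tau_\varepsilon,\infty}$; and since $\varepsilon_i(1) - \varepsilon_i(0) = (\tau_i - \bar\tau) - (\mathbf{x}_i - \bar{\mathbf{x}})^\T(Q_1 - Q_0)$ has population mean zero, $\Sigma_{\tau_\varepsilon,\infty} = \lim_{N\to\infty} N^{-1}\sum_{i=1}^N \{(\tau_i - \bar\tau) - (\mathbf{x}_i - \bar{\mathbf{x}})^\T(Q_1 - Q_0)\}^2$, which is the displayed form of $\Delta$.

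The step I expect to be the main obstacle is the slope-and-residual analysis along $\mathbf{W}$ conditional on $\mathbf{Z}$: one must check both that the relevant Gram matrices are asymptotically invertible and --- more delicately --- that the remainders left by replacing sample residuals with population residuals are genuinely $o_p(1)$, which requires marrying the finite-population subsampling laws of large numbers to the bounded-fourth-moment Assumption~\ref{supp: asm: bounded fourth moment} while carefully tracking the mixing fractions induced by $\mathbf{Z}$.
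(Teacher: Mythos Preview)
Your proposal is correct and follows essentially the same route as the paper's proof: both reduce to showing that the within-group sample residual variances track the population residual variances $\Sigma_{\varepsilon(z),\infty}$ along $\mathbf{Z}$ and the mixture $p\Sigma_{\tilde\varepsilon(1),\infty}+(1-p)\Sigma_{\tilde\varepsilon(0),\infty}$ along $\mathbf{W}$, via convergence of the sample slopes to $Q_0,Q_1$ and $\tilde Q$ respectively, and then read off $\Delta=\Sigma_{\tau_\varepsilon,\infty}$ exactly as you do. The paper packages the slope-convergence step inside Proposition~\ref{supp: prop: reg adj vs residuals} and, for the $\mathbf{W}$ part, makes the conditioning-on-$\mathbf{Z}$ argument explicit through Lemma~\ref{supp: lem: condition convergence in prob to joint convergence in prob} (almost-sure convergence conditional on $\mathbf{Z}$ lifting to unconditional convergence in probability), which is precisely the ``main obstacle'' you anticipate.
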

 By Theorem 2, one may apply Gaussian prepivoting to $\sqrt{N}\hat{\tau}_{reg}$ using $\hat{V}_{reg}$ and any function $f_{\hat{\xi}}$ satisfying Condition \ref{supp: cond: f} and \ref{supp: cond: xi}; for instance, take $f_{\hat{\xi}}(\sqrt{N}\hat{\tau}_{reg}) = \sqrt{N}|\hat{\tau}_{reg}|$. Note that other asymptotically equivalent forms for $\hat{V}_{reg}$ to the one given here exist. For example, Section 5 of \citet{agnosticRegAdj} suggests using the sandwich variance estimator corresponding to $\hat{\tau}_{reg}$.

\subsection{Proof of Proposition 2}
We begin with the following Lemma:
{\renewcommand{\thelemma}{\Alph{lemma}}
}
 \begin{lemma}\label{supp: prop: residual properties implicitly hold}
     If Assumptions \ref{supp: asm: means and covs stabilize}	and \ref{supp: asm: worst case vanishes} hold for the potential outcomes and covariates, then Assumptions \ref{supp: asm: means and covs stabilize}	and \ref{supp: asm: worst case vanishes} hold for the collection of $\varepsilon_{i}(z)$.  Likewise, if Assumptions \ref{supp: asm: means and covs stabilize}	and \ref{supp: asm: bounded fourth moment} hold for the potential outcomes and covariates, then Assumptions \ref{supp: asm: means and covs stabilize} and \ref{supp: asm: bounded fourth moment} hold for the collection of $\tilde{\varepsilon}_i(z)$.
 \end{lemma}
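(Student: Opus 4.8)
The plan is to exploit that each residual $\varepsilon_i(z)$, and likewise each $\tilde\varepsilon_i(z)$, is a \emph{fixed} linear combination of the centered quantities $(y_i(z)-\overline{y}(z))$ and $(\mathbf{x}_i-\overline{\mathbf{x}})$ with coefficients that do not depend on $N$. Indeed $Q_1$ and $Q_0$, and hence $\tilde Q = pQ_1+(1-p)Q_0$, are finite, $N$-free matrices, being defined as limits (their existence, i.e.\ nonsingularity of $\lim_N N^{-1}\sum_i(\mathbf{x}_i-\overline{\mathbf{x}})(\mathbf{x}_i-\overline{\mathbf{x}})^\T$, being part of the regression-adjustment setup). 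Writing $u_i(z) = \big((y_i(z)-\overline{y}(z)),\,(\mathbf{x}_i-\overline{\mathbf{x}})^\T\big)^\T$, we have $\varepsilon_i(z) = c_z^\T u_i(z)$ with $c_z = (1,-Q_z^\T)^\T$ and $\tilde\varepsilon_i(z) = \tilde c^\T u_i(z)$ with $\tilde c = (1,-\tilde Q^\T)^\T$, both coefficient vectors free of $N$. A first easy observation is that averaging over $i$ gives $\overline{\varepsilon}(z) = \overline{\tilde\varepsilon}(z) = \mathbf{0}$, so these collections are already centered and their means trivially admit limits.

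For the Assumption~\ref{supp: asm: means and covs stabilize} conclusion, I would observe that every finite-population (co)variance among the $\varepsilon_i(z)$ and the covariates $\mathbf{x}_i$ is a fixed bilinear form in the joint empirical second moments of $(y(1),y(0),\mathbf{x})$; for instance $(N-1)^{-1}\sum_i \varepsilon_i(z)\varepsilon_i(z') = c_z^\T\big[(N-1)^{-1}\sum_i u_i(z)u_i(z')^\T\big]c_{z'}$, and analogously for $(N-1)^{-1}\sum_i \varepsilon_i(z)(\mathbf{x}_i-\overline{\mathbf{x}})^\T$, for $\Sigma_{\tau_\varepsilon}$, for $\Sigma_{\tau_\varepsilon x}$, and so on. The bracketed matrices are submatrices of the joint empirical covariance of $(y(1),y(0),\mathbf{x})$, which converges under Assumption~\ref{supp: asm: means and covs stabilize} for the potential outcomes and covariates (this includes $\Sigma_\tau$, hence the $y(1)$--$y(0)$ cross-moment). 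Since the coefficient vectors are $N$-free, continuity of matrix multiplication delivers convergence of each such moment; the identical argument with every $c_z$ replaced by the common $\tilde c$ handles the $\tilde\varepsilon_i(z)$. The covariate part of Assumption~\ref{supp: asm: means and covs stabilize} is inherited verbatim, the covariates being unchanged.

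For Assumptions~\ref{supp: asm: worst case vanishes} and~\ref{supp: asm: bounded fourth moment} I would start from the crude bound
\[
\varepsilon_i(z)^2 = \big(c_z^\T u_i(z)\big)^2 \le \|c_z\|^2\,\|u_i(z)\|^2 = \|c_z\|^2\Big((y_i(z)-\overline{y}(z))^2 + \sum_{j=1}^k (x_{ij}-\overline{x}_j)^2\Big).
\]
Dividing by $N$ and maximizing over $i$ bounds $\max_i \varepsilon_i(z)^2/N$ by $\|c_z\|^2$ times a sum of the $1+k$ terms appearing in Assumption~\ref{supp: asm: worst case vanishes} (one for $y(z)$, one for each covariate coordinate), each tending to $0$; since $\overline{\varepsilon}(z)=\mathbf{0}$ this is exactly the quantity Assumption~\ref{supp: asm: worst case vanishes} asks to vanish. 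Squaring the display and using $(\sum_{l=1}^{k+1}a_l)^2 \le (k+1)\sum_{l=1}^{k+1}a_l^2$ on the $k+1$ nonnegative coordinates gives $\varepsilon_i(z)^4 \le \|c_z\|^4(k+1)\big((y_i(z)-\overline{y}(z))^4 + \sum_{j=1}^k (x_{ij}-\overline{x}_j)^4\big)$; averaging over $i$ and invoking Assumption~\ref{supp: asm: bounded fourth moment} for $y(z)$ and for the covariates yields the uniform bound $N^{-1}\sum_i \varepsilon_i(z)^4 \le \|c_z\|^4(k+1)^2 C$. The $\tilde\varepsilon_i(z)$ are treated identically with $\|\tilde c\|$ in place of $\|c_z\|$, and the covariate clauses are again inherited directly. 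The only thing requiring care --- and what I would flag as the ``main obstacle,'' though it is really just bookkeeping --- is that all constants be uniform in $N$: this holds precisely because $Q_1,Q_0,\tilde Q$ are the \emph{limiting} slopes (so $\|c_z\|$ and $\|\tilde c\|$ are fixed) and because $k$ is a fixed finite dimension, so the expansions of $\|u_i(z)\|^2$ and $\|u_i(z)\|^4$ produce only boundedly many terms.
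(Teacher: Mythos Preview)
Your proof is correct and follows essentially the same route as the paper's: both arguments exploit that the residuals are fixed linear combinations of the centered outcomes and covariates with $N$-free coefficients, note that the means vanish, read Assumption~\ref{supp: asm: means and covs stabilize} off from bilinearity in the original second moments, and handle Assumptions~\ref{supp: asm: worst case vanishes} and~\ref{supp: asm: bounded fourth moment} via elementary inequalities. The only cosmetic difference is that the paper bounds the fourth-power term $\big((\mathbf{x}_i-\overline{\mathbf{x}})^\T \tilde Q\big)^4$ directly via H\"older's inequality, whereas you square the Cauchy--Schwarz bound and apply the power-mean inequality; both yield the same uniform-in-$N$ control.
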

 \begin{proof}
    For each $N$, expanding by the definition of $\varepsilon_{i}(1)$ yields
    \begin{align*}
         \bar{\varepsilon}(1) &= N^{-1}\sum_{i = 1}^{N}\left(({y}_i(1) - \overline{y}(1)) - (\mathbf{x}_{i} - \overline{\mathbf{x}})^\T Q_{1} \right) = 0;\\
         \Sigma_{\varepsilon(1)} &= (N-1)^{-1}\sum_{i=1}^N\left({y}_i(1) - \overline{y}(1) - (\mathbf{x}_{i} - \overline{\mathbf{x}})^\T Q_{1} \right)^2.
    \end{align*}
  By inspection, Assumption \ref{supp: asm: means and covs stabilize} holds for the collection of $\varepsilon_{i}(1)$ so long as the potential outcomes and covariates satisfy Assumption \ref{supp: asm: means and covs stabilize}.  Similar proofs establish Assumption \ref{supp: asm: means and covs stabilize} for $\varepsilon_{i}(0)$, $\tilde{\varepsilon}_i(0)$, and $\tilde{\varepsilon}_i(1)$.

    Suppose that Assumption \ref{supp: asm: worst case vanishes} holds for the potential outcomes and covariates. Then
	\begin{gather}
	    \lim_{N \rightarrow \infty}\max_{\substack{z \in \{0, 1\}}}\max_{i \in \{1, \ldots, N\}}\frac{\left({y}_{i}(z) - \bar{{y}}(z)\right)^{2}}{N} = 0\label{supp: eqn: y limit}\\
	    \text{and}\nonumber\\
	    \lim_{N \rightarrow \infty}\max_{j\in \{1,...,k\}}\max_{i \in \{1, \ldots, N\}}\frac{\left({x}_{ij} - \bar{{x}}_j\right)^{2}}{N} = 0.\nonumber
	\end{gather}
	As a consequence of the second statement,
	\begin{equation*}
	    \lim_{N \rightarrow \infty}\max_{i \in \{1, \ldots, N\}}\frac{\sum_{j = 1}^{d}\left({x}_{ij} - \bar{{x}}_j\right)^{2}}{N} = 0
	\end{equation*}
    and so, by the Cauchy-Schwarz inequality,
    \begin{multline}\label{supp: eqn: xQ limit}
	    \lim_{N \rightarrow \infty}\max_{i \in \{1, \ldots, N\}}\frac{\left(\mathbf{x}_{i}^\T Q_{1}- \bar{\mathbf{x}}^\T Q_{1}\right)^{2}}{N}
	    \leq \lim_{N \rightarrow \infty}\max_{i \in \{1, \ldots, N\}}\frac{||Q_{1}||_{2}^{2}\sum_{j = 1}^{d}\left({x}_{ij} - \bar{{x}}_{j}\right)^{2}}{N} = 0.
	\end{multline}

	Because $\left(({y}_i(1) - \overline{y}(1)) - (\mathbf{x}_{i} - \overline{\mathbf{x}})^\T Q_{1} \right)^{2} \leq 2({y}_i(1) - \overline{y}(1))^{2} + 2\left((\mathbf{x}_{i} - \overline{\mathbf{x}})^\T Q_{1} \right)^{2}$ it follows from \eqref{supp: eqn: y limit} and \eqref{supp: eqn: xQ limit} that Assumption \ref{supp: asm: worst case vanishes} holds for the collection of $\varepsilon_{i}(z)$.

	Now suppose that Assumption \ref{supp: asm: bounded fourth moment} holds for the potential outcomes and covariates: there exists some $C < \infty$ for which, for all $z\in\{0,1\}$ and all $N$,
	\begin{gather*}
		\frac{\sum_{i = 1}^{N}\left(y_{ij}(z) - \overline{y}_j(z)\right)^{4}}{N} < C \quad \forall j=1,..,d \\
		\text{and}\\
		\frac{\sum_{i = 1}^{N}\left({x}_{ij} - \overline{{x}}_j\right)^{4}}{N} < C \quad \forall j=1,..,k.
	\end{gather*}

	Modifying the argument from above to accommodate $\tilde{Q}$ instead of $Q_{1}$ and applying H\"{o}lder's inequality gives the desired result.  Specifically,  H\"{o}lder's inequality implies that
	\begin{equation*}
	    \left(\mathbf{x}_{i}^\T \tilde{Q}- \bar{\mathbf{x}}^\T \tilde{Q}\right)^{4} \leq C_Q ||\mathbf{x}_{i} - \overline{\mathbf{x}}||^{4}_{4}.
	\end{equation*}
	where $C_Q$ is a constant that does not change with $N$ and depends only upon $\tilde{Q}$.  Combining this inequality with Assumption~\ref{supp: asm: bounded fourth moment} on the potential outcomes then gives that Assumption \ref{supp: asm: bounded fourth moment} holds for the collection of $\tilde{\varepsilon}_i(z)$.
\end{proof}

We split the proof of Proposition~\ref{supp: prop: reg adj vs residuals} into two: Proposition~\ref{supp: prop: reg adj vs residuals_randomization} and Proposition~\ref{supp: prop: reg adj vs residuals_permutation}.
\setcounter{proposition}{2}
\edef\oldproposition{\the\numexpr\value{proposition}}
\setcounter{proposition}{0}
\renewcommand{\theproposition}{\oldproposition (\alph{proposition})}
\begin{proposition}\label{supp: prop: reg adj vs residuals_randomization}
    \begin{equation*}
        \sqrt{N}\left\{\tauhat_{reg}(\yobs, \mathbf{Z}) - \bar{{\tau}}\right\} = \sqrt{N}\left(\frac{1}{n_1}\sum_{i=1}^NZ_i\varepsilon_i(Z_i) - \frac{1}{n_0}\sum_{i=1}^N(1-Z_i)\varepsilon_i(Z_i)\right) + o_p(1)
    \end{equation*}
\end{proposition}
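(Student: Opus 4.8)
The plan is to reduce $\hat{\tau}_{reg}$ to an explicit function of group-specific sample moments by exploiting the standard equivalence between the fully interacted OLS regression and separate within-group regressions, and then to match it term-by-term with the target linear statistic. Because the regression of $y_i(Z_i)$ on $Z_i$, $(\mathbf{x}_i - \bar{\mathbf{x}})$, and $Z_i(\mathbf{x}_i - \bar{\mathbf{x}})$ is saturated in $Z_i$, its fitted values on the treated units coincide with those of the OLS fit of $y_i(1)$ on $(1, \mathbf{x}_i - \bar{\mathbf{x}})$ restricted to the treated units, and analogously on the controls. Writing $\hat{Q}_1, \hat{Q}_0$ for the within-group OLS slope vectors (these do not change if the covariates are recentered by a constant) and $\bar{y}_1, \bar{y}_0, \bar{\mathbf{x}}_1, \bar{\mathbf{x}}_0$ for the treated/control sample means of the observed outcomes and of the covariates, one obtains
\[
\hat{\tau}_{reg}(\yobs, \mathbf{Z}) = \left(\bar{y}_1 - (\bar{\mathbf{x}}_1 - \bar{\mathbf{x}})^\T \hat{Q}_1\right) - \left(\bar{y}_0 - (\bar{\mathbf{x}}_0 - \bar{\mathbf{x}})^\T \hat{Q}_0\right).
\]
Substituting the exact identities $\bar{\mathbf{x}}_1 - \bar{\mathbf{x}} = (n_0/N)\hat{\bm{\delta}}$ and $\bar{\mathbf{x}}_0 - \bar{\mathbf{x}} = -(n_1/N)\hat{\bm{\delta}}$ and subtracting $\bar{\tau}$ yields
\[
\hat{\tau}_{reg}(\yobs, \mathbf{Z}) - \bar{\tau} = (\hat{\tau} - \bar{\tau}) - \hat{\bm{\delta}}^\T\left\{(n_0/N)\hat{Q}_1 + (n_1/N)\hat{Q}_0\right\}.
\]

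Next I would do the identical bookkeeping for the right-hand side of the proposition. Expanding $n_1^{-1}\sum_i Z_i \varepsilon_i(Z_i) - n_0^{-1}\sum_i(1-Z_i)\varepsilon_i(Z_i)$ directly from $\varepsilon_i(z) = (y_i(z) - \bar{y}(z)) - (\mathbf{x}_i - \bar{\mathbf{x}})^\T Q_z$ and using the same two covariate identities gives
\[
\frac{1}{n_1}\sum_{i=1}^N Z_i \varepsilon_i(Z_i) - \frac{1}{n_0}\sum_{i=1}^N(1-Z_i)\varepsilon_i(Z_i) = (\hat{\tau} - \bar{\tau}) - \hat{\bm{\delta}}^\T\left\{(n_0/N)Q_1 + (n_1/N)Q_0\right\}.
\]
Subtracting the two displays, the $(\hat{\tau} - \bar{\tau})$ terms cancel, so the remainder to be controlled is
\[
\sqrt{N}\left\{(\hat{\tau}_{reg} - \bar{\tau}) - \left(\frac{1}{n_1}\sum_{i=1}^N Z_i \varepsilon_i(Z_i) - \frac{1}{n_0}\sum_{i=1}^N(1-Z_i)\varepsilon_i(Z_i)\right)\right\} = -(\sqrt{N}\,\hat{\bm{\delta}})^\T\left\{\tfrac{n_0}{N}(\hat{Q}_1 - Q_1) + \tfrac{n_1}{N}(\hat{Q}_0 - Q_0)\right\}.
\]

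It then remains to show the right-hand side is $o_p(1)$. By the finite population central limit theorem of \citet{FiniteCLT} applied to the covariate difference in means (Assumption~\ref{supp: asm: non-degen sampling limit} together with the stated moment conditions), $\sqrt{N}\,\hat{\bm{\delta}} = O_p(1)$, while $n_0/N$ and $n_1/N$ are bounded; hence it suffices to prove $\hat{Q}_z - Q_z = o_p(1)$ for $z \in \{0,1\}$. Under complete randomization the treated units constitute a simple random sample of size $n_1$ drawn without replacement from the $N$ units, so the finite-population law of large numbers --- valid under the moment bounds of Assumption~\ref{supp: asm: bounded fourth moment}, which are also in force here and are the same tool used elsewhere in the paper for variance estimators --- gives that the within-treated-group sample covariance matrix of $\mathbf{x}_i$ and the within-treated-group sample cross-moment of $\mathbf{x}_i$ with $y_i(1)$ converge in probability to $\Sigma_{x,\infty}$ and to the limiting value of the full finite-population cross-moment between $\mathbf{x}$ and $y(1)$. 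Since $\Sigma_{x,\infty} \succ 0$ (which also guarantees that the within-group design matrix is invertible with probability tending to one, $k$ being fixed, and which is required throughout for $V_{\delta\delta} \succ 0$), the continuous mapping theorem yields $\hat{Q}_1 \overset{p}{\rightarrow} Q_1$, and an identical argument gives $\hat{Q}_0 \overset{p}{\rightarrow} Q_0$. Hence the displayed remainder is $O_p(1)\cdot o_p(1) = o_p(1)$, which is the claim; the companion identity with $\tilde{Q}$ in place of $Q_z$ (Proposition~2(b)) follows by the same computation.

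The only nonalgebraic ingredient --- and the step I expect to require the most care --- is the consistency $\hat{Q}_z \overset{p}{\rightarrow} Q_z$: one must check that ordinary least squares fit inside the randomly selected treated (resp.\ control) subsample estimates the \emph{full} finite-population regression slope $Q_z$ (not merely some subsample-specific quantity), and that passing to the ratio --- inverse sample covariance matrix times sample cross-moment --- of two converging sample moments is legitimate. This is routine under the moment assumptions and is precisely the finite-population law-of-large-numbers machinery the paper already invokes; alternatively one may simply cite the asymptotically linear representation in \citet{agnosticRegAdj}, of which the displayed identity is a notational restatement.
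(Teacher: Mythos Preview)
Your proof is correct and follows essentially the same route as the paper's: both use the equivalence of the interacted regression with separate within-group fits to express $\hat{\tau}_{reg}-\bar{\tau}$ as the difference in means of sample residuals built from $\hat{Q}_z$, and then invoke $\hat{Q}_z\overset{p}{\rightarrow}Q_z$ (the paper cites Lemma~A.5 of \citet{agnosticRegAdj} for this). Your version is in fact a bit more explicit than the paper's sketch on the key step---you isolate the remainder as $-(\sqrt{N}\,\hat{\bm{\delta}})^\T\{(n_0/N)(\hat{Q}_1-Q_1)+(n_1/N)(\hat{Q}_0-Q_0)\}$ and dispatch it as $O_p(1)\cdot o_p(1)$, whereas the paper writes $\hat{\varepsilon}_i(z)-\varepsilon_i(z)=o_P(1)$ pointwise and then defers to the argument in Lin's Theorem~1 for why this survives the $\sqrt{N}$ scaling; your formulation is exactly what that deferred argument amounts to.
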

\begin{proof}
    By Lemma A.3 of \citet{agnosticRegAdj},
    \begin{equation*}
        \tauhat_{reg}(\yobs, \mathbf{Z}) - \bar{{\tau}} = \frac{1}{n_1}\sum_{i=1}^NZ_i\hat{\varepsilon}_i(\yobs, \bfZ) - \frac{1}{n_0}\sum_{i=1}^N(1-Z_i)\hat{\varepsilon}_i(\yobs, \bfZ)
    \end{equation*}
    where the sample residuals $\hat{\varepsilon}_i(\yobs, \bfZ)$ are derived from the regression of $y_i(Z_{i})$ on $Z_{i}$, $(\mathbf{x}_{i} - \bar{\mathbf{x}})$, and $Z_i(\mathbf{x}_i-\bar{\mathbf{x}})$.  Let $\hat{Q}_{1}(\yobs, \bfZ)$ be the sample slope coefficient in the OLS regression of $y_{i}(Z_{i})$ on $\mathbf{x}_{i}$ in the group of individuals for which $Z_{i} = 1$; similarly, let $\hat{Q}_{0}(\yobs, \bfZ)$ be the sample slope coefficient in the population OLS regression of $y_{i}(Z_{i})$ on $\mathbf{x}_{i}$ in the group of individuals for which $Z_{i} = 0$ \citep{agnosticRegAdj}.  

    Define
    \begin{align*}
        \hat{\varepsilon}_{i}(1) &= ({y}_i(1) - \overline{y}(1)) - (\mathbf{x}_{i} - \overline{\mathbf{x}})^\T \hat{Q}_{1}(\yobs, \bfZ);\\
		\hat{\varepsilon}_i(0) &= ({y}_i(0) - \overline{y}(0)) - (\mathbf{x}_{i} - \overline{\mathbf{x}})^\T \hat{Q}_{0}(\yobs, \bfZ);
	\end{align*}
	these are random and depend upon $\bfZ$.  The sample residual $\hat{\varepsilon}_i(\yobs, \bfZ)$ is $\hat{\varepsilon}_{i}(Z_{i})$.

	By standard OLS theory the slope coefficient matrix $\hat{Q}_{1}(\yobs, \bfZ)$ is defined by \begin{multline*}
		\hat{Q}_{1}(\yobs, \bfZ) = \left(\frac{1}{n_{1} - 1}\sum_{i = 1}^{N}Z_{i}\left(\mathbf{x}_{i} - n_{1}^{-1}\sum_{j = 1}^{N}Z_{j}\mathbf{x}_{j}\right)\left(\mathbf{x}_{i} - n_{1}^{-1}\sum_{j = 1}^{N}Z_{j}\mathbf{x}_{j}\right)^{T} \right)^{-1}\times \\ \left(\frac{1}{n_{1} - 1}\sum_{i = 1}^{N}Z_{i}\left(\mathbf{x}_{i} - n_{1}^{-1}\sum_{j = 1}^{N}Z_{j}\mathbf{x}_{j}\right)\left(y_i(Z_{i}) - n_{1}^{-1}\sum_{j = 1}^{N}Z_{j}y(Z_{j})\right) \right)
	\end{multline*}
	$\hat{Q}_{0}(\yobs, \bfZ)$ is defined analogously.

	By weak laws of large numbers for covariance matrices in finite populations, $\hat{Q}_{0}(\yobs, \bfZ)$ and $\hat{Q}_{1}(\yobs, \bfZ)$ converge in probability to $Q_{0}$ and $Q_{1}$, respectively \citep[Lemma A.5]{agnosticRegAdj}.  Thus,
	\begin{align*}
        \hat{\varepsilon}_{i}(1) -\varepsilon_{i}(1) &= o_{P}(1)\\
		\hat{\varepsilon}_{i}(0) -\varepsilon_{i}(0) &= o_{P}(1)\\
    \end{align*}

	From this, it follows that
	\begin{equation*}
        \sqrt{N}\left\{\tauhat_{reg}(\yobs, \mathbf{Z}) - \bar{\tau}\right\} = \sqrt{N}\left(\frac{1}{n_1}\sum_{i=1}^NZ_i\varepsilon_i(Z_i) - \frac{1}{n_0}\sum_{i=1}^N(1-Z_i)\varepsilon_i(Z_i)\right) + o_p(1)
    \end{equation*}
    This proof closely parallels the logic used in the proof for Theorem 1 of \citet{agnosticRegAdj}.
\end{proof}

Before proving the remaining component of Proposition~\ref{supp: prop: reg adj vs residuals} we provide a convenient lemma.
\delineate

\renewcommand{\theproperty}{\Alph{property}}
Consider a function $g: \Omega\times\Omega \rightarrow \R$.  Let $\bfZ$ and $\bfW$ independently distributed uniformly over $\Omega$.  Define two properties:
\begin{property}\label{supp: property: conditional almost sure conv. in prob}
     The random variable $g(\bfZ, \bfW) \given \bfZ = \bfz$ converges in probability to $c$ for all conditioning sets $\{\bfz\}_{N \in \N}$ except for a set of measure zero.
\end{property}
\begin{property}\label{supp: property: joint conv. in prob}
     The random variable $g(\bfZ, \bfW)$ converges in probability to $c$ with respect to randomness in both $\bfZ$ and $\bfW$.
\end{property}

\begin{lemma}\label{supp: lem: condition convergence in prob to joint convergence in prob}
    Consider a function $g: \Omega\times\Omega \rightarrow \R$.  For $\bfZ$ and $\bfW$ independently distributed uniformly over $\Omega$ Property~\ref{supp: property: conditional almost sure conv. in prob} implies Property~\ref{supp: property: joint conv. in prob}.
\end{lemma}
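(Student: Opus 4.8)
This is a purely measure-theoretic statement, so the plan does not use any of the causal-inference machinery developed earlier; it rests only on Fubini's theorem and the bounded convergence theorem. The strategy is to pass from the conditional-in-$\mathbf{W}$ statement (Property~\ref{supp: property: conditional almost sure conv. in prob}) to the joint statement (Property~\ref{supp: property: joint conv. in prob}) through the auxiliary function $h_N(\mathbf{z}) := \mathbb{P}(|g(\mathbf{z},\mathbf{W}) - c| > \varepsilon)$, which is bounded in $[0,1]$.

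First I would fix $\varepsilon > 0$. Since $\Omega$ is finite at each stage $N$, all integrals in sight are finite sums and Fubini's theorem is immediate, giving $\mathbb{P}(|g(\mathbf{Z},\mathbf{W}) - c| > \varepsilon) = \mathbb{E}[h_N(\mathbf{Z})]$, where on the right $\mathbf{Z} \sim \mathrm{Unif}(\Omega)$. Thus it suffices to show $\mathbb{E}[h_N(\mathbf{Z})] \to 0$ for every $\varepsilon > 0$.

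Next I would realize Property~\ref{supp: property: conditional almost sure conv. in prob} on the product probability space $\bigl(\prod_N \Omega,\, \bigotimes_N \mathrm{Unif}(\Omega)\bigr)$, whose $N$-th coordinate has the law of $\mathbf{Z}$ at stage $N$; the ``set of measure zero'' in that property is understood with respect to this product measure on sequences $(\mathbf{z}_N)_N$. Taking a sequence $\varepsilon_k \downarrow 0$ and intersecting the corresponding null sets (again a null set), Property~\ref{supp: property: conditional almost sure conv. in prob} says exactly that for product-almost-every sequence $(\mathbf{z}_N)_N$ one has $h_N(\mathbf{z}_N) \to 0$ for each $\varepsilon$; equivalently, $h_N \to 0$ almost surely on the product space. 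Almost sure convergence implies convergence in probability, and since $h_N$ depends on a sequence only through its $N$-th coordinate, this is precisely $\mathbb{P}(h_N(\mathbf{Z}) > \delta) \to 0$ for every $\delta > 0$, i.e.\ $h_N(\mathbf{Z}) \overset{p}{\to} 0$ in the sense of the laws on $\Omega$.

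Finally, because $0 \le h_N(\mathbf{Z}) \le 1$ and $h_N(\mathbf{Z}) \overset{p}{\to} 0$, the bounded convergence theorem gives $\mathbb{E}[h_N(\mathbf{Z})] \to 0$: concretely, $\mathbb{E}[h_N(\mathbf{Z})] \le \delta + \mathbb{P}(h_N(\mathbf{Z}) > \delta)$ by splitting on $\{h_N(\mathbf{Z}) \le \delta\}$ and its complement, so $\limsup_N \mathbb{E}[h_N(\mathbf{Z})] \le \delta$, and letting $\delta \downarrow 0$ finishes it. Combining with the first step yields $\mathbb{P}(|g(\mathbf{Z},\mathbf{W}) - c| > \varepsilon) \to 0$ for every $\varepsilon > 0$, which is Property~\ref{supp: property: joint conv. in prob}. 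The only part requiring care is the bookkeeping — pinning down ``measure zero'' as the product measure and keeping the two levels of quantifier ($\varepsilon$ for the conclusion, $\delta$ for the bounded-convergence step) straight — after which the argument is entirely routine; I do not anticipate a real obstacle.
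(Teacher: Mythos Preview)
Your proposal is correct and follows essentially the same route as the paper: express the joint probability $\mathbb{P}(|g(\mathbf{Z},\mathbf{W})-c|\geq\varepsilon)$ as $\mathbb{E}_{\mathbf{Z}}\bigl[\mathbb{P}_{\mathbf{W}\mid\mathbf{Z}}(|g(\mathbf{Z},\mathbf{W})-c|\geq\varepsilon\mid\mathbf{Z})\bigr]$ via the law of total probability, then invoke bounded convergence on the inner term, which lies in $[0,1]$ and tends to zero almost surely. The paper's version is marginally shorter because it applies bounded convergence directly to the almost-sure convergence rather than first weakening to convergence in probability and using your $\delta$-splitting, but the substance is identical and your extra care in pinning down the product space for the ``measure zero'' exceptional set is a point the paper leaves implicit.
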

\begin{proof}
    Assume that Property~\ref{supp: property: conditional almost sure conv. in prob} holds.  Fix $\varepsilon > 0$; then
    \begin{equation}\label{supp: eqn: property A}
        \Prob[\bfW\given \bfZ]{|g(\bfZ, \bfW) - c| \geq \varepsilon \given \bfZ } \convAS[] 0.
    \end{equation}
    Consider $\Prob[\bfW, \bfZ]{|g(\bfZ, \bfW) - c| \geq \varepsilon}$; by the law of total probability
    \begin{align*}
        \Prob[\bfZ, \bfW]{|g(\bfZ, \bfW) - c| \geq \varepsilon} &= \sum_{\bfz \in \Omega} \Prob[\bfW\given \bfZ = \bfz]{|g(\bfZ, \bfW) - c| \geq \varepsilon \given \bfZ = \bfz} \Prob[\bfZ]{\bfZ = \bf\z}\\
        &= \Expectation[\bfZ]{ \Prob[\bfW\given \bfZ]{|g(\bfZ, \bfW) - c| \geq \varepsilon \given \bfZ }}
    \end{align*}

    Since $\Prob[\bfW\given \bfZ]{|g(\bfZ, \bfW) - c| \geq \varepsilon \given \bfZ } \convAS[] 0$ and $ {\Prob[\bfW\given \bfZ]{|g(\bfZ, \bfW) - c| \geq \varepsilon \given \bfZ } \in [0, 1]}$ the bounded convergence theorem implies that
    \begin{equation*}
        \lim_{N \rightarrow \infty} \Expectation[\bfZ]{ \Prob[\bfW\given \bfZ]{|g(\bfZ, \bfW) - c| \geq \varepsilon \given \bfZ }} = \Expectation[\bfZ]{0} = 0
    \end{equation*}

    Thus, $g(\bfZ, \bfW)$ converges in probability to $c$ with respect to randomness in both $\bfZ$ and $\bfW$.
\end{proof}

\begin{proposition}\label{supp: prop: reg adj vs residuals_permutation}
    \begin{equation*}
        \sqrt{N}\left\{\tauhat_{reg}(\yobs - \mathbf{Z}\bar{\tau}, \mathbf{W})\right\} = \sqrt{N}\left(\frac{1}{n_1}\sum_{i=1}^NW_i\tilde{\varepsilon}_i(Z_i) - \frac{1}{n_0}\sum_{i=1}^N(1-W_i)\tilde{\varepsilon}_i(Z_i)\right) + o_p(1)
    \end{equation*}
\end{proposition}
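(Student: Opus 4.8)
The plan is to follow the template of the proof of Proposition~\ref{supp: prop: reg adj vs residuals_randomization}; the one genuinely new ingredient is that the within-arm ordinary least squares covariate slopes produced by the \emph{permuted} regression converge to the mixed slope $\tilde{Q}=pQ_{1}+(1-p)Q_{0}$ rather than to the arm-specific slopes $Q_{0},Q_{1}$. Write $\hat{Q}_{1}(\tilde{\mathbf{y}}(\mathbf{Z}),\mathbf{W})$ and $\hat{Q}_{0}(\tilde{\mathbf{y}}(\mathbf{Z}),\mathbf{W})$ for the sample slopes in the OLS regression of $\tilde{y}_{i}(Z_{i})$ on $\mathbf{x}_{i}$ within $\{i:W_{i}=1\}$ and within $\{i:W_{i}=0\}$ respectively, in exact analogy with the $\hat{Q}_{z}(\yobs,\mathbf{Z})$ of Proposition~\ref{supp: prop: reg adj vs residuals_randomization}. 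Least squares algebra for the fully interacted regression --- the identity underlying Lemma A.3 of \citet{agnosticRegAdj} --- gives the exact decomposition
\[
\tauhat_{reg}(\tilde{\mathbf{y}}(\mathbf{Z}),\mathbf{W})=\hat{\tau}(\tilde{\mathbf{y}}(\mathbf{Z}),\mathbf{W})-\hat{\delta}(\mathbf{x},\mathbf{W})^{\T}\Bigl(\tfrac{n_{0}}{N}\,\hat{Q}_{1}(\tilde{\mathbf{y}}(\mathbf{Z}),\mathbf{W})+\tfrac{n_{1}}{N}\,\hat{Q}_{0}(\tilde{\mathbf{y}}(\mathbf{Z}),\mathbf{W})\Bigr).
\]
Separately, directly from the definition of $\tilde{\varepsilon}_{i}$ one checks that $\tilde{\varepsilon}_{i}(Z_{i})=\tilde{y}_{i}(Z_{i})-c-(\mathbf{x}_{i}-\bar{\mathbf{x}})^{\T}\tilde{Q}$ for a constant $c$ not depending on $i$, so that the right-hand side of the claimed identity equals $\hat{\tau}(\tilde{\mathbf{y}}(\mathbf{Z}),\mathbf{W})-\hat{\delta}(\mathbf{x},\mathbf{W})^{\T}\tilde{Q}$ \emph{exactly}: the constant cancels in the arm difference and the covariate term contributes precisely $\hat{\delta}(\mathbf{x},\mathbf{W})^{\T}\tilde{Q}$.

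Subtracting these two expressions, multiplying by $\sqrt{N}$, and recalling that $\sqrt{N}\hat{\delta}(\mathbf{x},\mathbf{W})=O_{p}(1)$ by the finite population central limit theorem of \citet{FiniteCLT} (with $\mathbf{W}$ uniform over $\Omega_{CRE}$), the proposition reduces to the single claim
\[
\tfrac{n_{0}}{N}\,\hat{Q}_{1}(\tilde{\mathbf{y}}(\mathbf{Z}),\mathbf{W})+\tfrac{n_{1}}{N}\,\hat{Q}_{0}(\tilde{\mathbf{y}}(\mathbf{Z}),\mathbf{W})\overset{p}{\rightarrow}\tilde{Q},
\]
since then the $\sqrt{N}$-scaled discrepancy between $\tauhat_{reg}(\tilde{\mathbf{y}}(\mathbf{Z}),\mathbf{W})$ and $\frac{1}{n_{1}}\sum_{i}W_{i}\tilde{\varepsilon}_{i}(Z_{i})-\frac{1}{n_{0}}\sum_{i}(1-W_{i})\tilde{\varepsilon}_{i}(Z_{i})$ is $(\sqrt{N}\hat{\delta}(\mathbf{x},\mathbf{W}))^{\T}$ times an $o_{p}(1)$ term, hence $o_{p}(1)$. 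Because $n_{0}/N\rightarrow1-p$ and $n_{1}/N\rightarrow p$, it in turn suffices to show $\hat{Q}_{1}(\tilde{\mathbf{y}}(\mathbf{Z}),\mathbf{W})\overset{p}{\rightarrow}\tilde{Q}$ and $\hat{Q}_{0}(\tilde{\mathbf{y}}(\mathbf{Z}),\mathbf{W})\overset{p}{\rightarrow}\tilde{Q}$.

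To establish $\hat{Q}_{1}(\tilde{\mathbf{y}}(\mathbf{Z}),\mathbf{W})\overset{p}{\rightarrow}\tilde{Q}$ (the argument for $\hat{Q}_{0}$ is identical), I would condition on $\mathbf{Z}$. For each fixed $\mathbf{Z}$ the pairs $\{(\mathbf{x}_{i},\tilde{y}_{i}(Z_{i}))\}_{i=1}^{N}$ form a finite population whose deviations from their means have uniformly (in $N$) bounded fourth moments --- a consequence of the assumptions of Proposition~\ref{supp: prop: reg adj vs residuals} --- and $\{i:W_{i}=1\}$ is a uniform random subset of size $n_{1}$; hence the finite population weak law of large numbers for sample covariance matrices (the tool invoked via Lemma A.5 of \citet{agnosticRegAdj}) gives that, conditional on $\mathbf{Z}$, $\hat{Q}_{1}(\tilde{\mathbf{y}}(\mathbf{Z}),\mathbf{W})$ converges in probability to the full-sample slope $F_{N}(\mathbf{Z})=\bigl(\sum_{i}(\mathbf{x}_{i}-\bar{\mathbf{x}})(\mathbf{x}_{i}-\bar{\mathbf{x}})^{\T}\bigr)^{-1}\sum_{i}(\mathbf{x}_{i}-\bar{\mathbf{x}})\tilde{y}_{i}(Z_{i})$ (using $\Sigma_{x,\infty}\succ0$). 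Since this holds for every conditioning realization, Lemma~\ref{supp: lem: condition convergence in prob to joint convergence in prob} --- whose proof goes through verbatim for a triangular array and with the constant target replaced by the $\mathbf{Z}$-measurable target $F_{N}(\mathbf{Z})$ --- upgrades it to $\hat{Q}_{1}(\tilde{\mathbf{y}}(\mathbf{Z}),\mathbf{W})-F_{N}(\mathbf{Z})=o_{p}(1)$ jointly in $(\mathbf{Z},\mathbf{W})$ (the same double-sampling structure is what is handled in the proof of Proposition~1 of the main text). It then remains to identify $\lim F_{N}(\mathbf{Z})$: since $\sum_{i}(\mathbf{x}_{i}-\bar{\mathbf{x}})=0$ and $\tilde{y}_{i}(Z_{i})=Z_{i}y_{i}(1)+(1-Z_{i})y_{i}(0)-Z_{i}\bar{\tau}$, the numerator of $F_{N}(\mathbf{Z})$ equals $\sum_{i}Z_{i}(\mathbf{x}_{i}-\bar{\mathbf{x}})y_{i}(1)+\sum_{i}(1-Z_{i})(\mathbf{x}_{i}-\bar{\mathbf{x}})y_{i}(0)-\bar{\tau}\sum_{i}Z_{i}(\mathbf{x}_{i}-\bar{\mathbf{x}})$; the last term is $O_{p}(\sqrt{N})$ and so vanishes after division by $N$, a finite population law of large numbers over $\mathbf{Z}$ sends $N^{-1}$ times the first two terms to the $p{:}(1-p)$-weighted average of the limiting covariate--outcome covariances, and $N^{-1}\sum_{i}(\mathbf{x}_{i}-\bar{\mathbf{x}})(\mathbf{x}_{i}-\bar{\mathbf{x}})^{\T}\rightarrow\Sigma_{x,\infty}$. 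Hence $F_{N}(\mathbf{Z})\overset{p}{\rightarrow}pQ_{1}+(1-p)Q_{0}=\tilde{Q}$, which combined with the previous step completes the reduction.

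The principal obstacle is precisely this convergence of the permuted within-arm slopes to the common mixed limit $\tilde{Q}$: in contrast to Proposition~\ref{supp: prop: reg adj vs residuals_randomization}, where each $\mathbf{Z}$-arm reveals a single potential outcome and the slopes converge to the distinct $Q_{0}$ and $Q_{1}$, here each $\mathbf{W}$-arm contains (asymptotically) a $p{:}(1-p)$ mixture of units revealing $y_{i}(1)$ and $y_{i}(0)$, so the moment bookkeeping must be done carefully and the conditional and marginal laws of large numbers must be stitched together through Lemma~\ref{supp: lem: condition convergence in prob to joint convergence in prob}. Everything else --- the algebraic reduction above, the $O_{p}(1)$ control of $\sqrt{N}\hat{\delta}(\mathbf{x},\mathbf{W})$, and the fact (Lemma~\ref{supp: prop: residual properties implicitly hold}) that the $\tilde{\varepsilon}_{i}(z)$ inherit the regularity required downstream --- is routine and mirrors the proof of Proposition~\ref{supp: prop: reg adj vs residuals_randomization}.
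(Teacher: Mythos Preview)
Your proposal is correct and follows essentially the same strategy as the paper: decompose $\tauhat_{reg}(\tilde{\mathbf{y}}(\mathbf{Z}),\mathbf{W})$ via Lin's Lemma A.3, reduce everything to showing $\hat{Q}_{j}(\tilde{\mathbf{y}}(\mathbf{Z}),\mathbf{W})\overset{p}{\rightarrow}\tilde{Q}$, and establish the latter by conditioning on $\mathbf{Z}$, applying a finite population law of large numbers over $\mathbf{W}$, and then invoking Lemma~\ref{supp: lem: condition convergence in prob to joint convergence in prob} to pass to unconditional convergence. The only organizational difference is that you route through the intermediate $\mathbf{Z}$-measurable target $F_{N}(\mathbf{Z})$ and then show $F_{N}(\mathbf{Z})\overset{p}{\rightarrow}\tilde{Q}$, whereas the paper first uses the finite population \emph{strong} law (requiring Assumption~\ref{supp: asm: bounded fourth moment}) to fix an almost-sure-in-$\mathbf{Z}$ set on which the imputed-population covariance already converges to the constant $p\Sigma_{y(1)x,\infty}+(1-p)\Sigma_{y(0)x,\infty}$, so that Lemma~\ref{supp: lem: condition convergence in prob to joint convergence in prob} can be applied with a constant limit exactly as stated; your phrase ``for every conditioning realization'' should accordingly be read as ``for almost every,'' but the substance is the same.
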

\begin{proof}


    By definition $\tauhat_{reg}(\yobs - \mathbf{Z}\bar{\tau}, \mathbf{W})$ is the estimated coefficient on $W_{i}$ in an ordinary least squares regression of $y_i(Z_{i}) - Z_{i}\bar{\tau}$ on $W_{i}$, $(\mathbf{x}_{i} - \bar{\mathbf{x}})$, and $W_i(\mathbf{x}_i-\bar{\mathbf{x}})$.

    By the same logic that gave rise to Lemma A.3 of \citet{agnosticRegAdj},
    \begin{equation*}
        \tauhat_{reg}(\yobs - \mathbf{Z}\bar{\tau}, \mathbf{W}) = \frac{1}{n_1}\sum_{i=1}^NW_i\hat{\varepsilon}_i(\yobstilde, \bfW) - \frac{1}{n_0}\sum_{i=1}^N(1-W_i)\hat{\varepsilon}_i(\yobstilde, \bfW)
    \end{equation*}
     where the sample residuals $\hat{\varepsilon}_i(\yobstilde, \bfW)$ are derived from the regression of $y_i(Z_{i}) - Z_{i}\bar{\tau}$ on $W_{i}$, $(\mathbf{x}_{i} - \bar{\mathbf{x}})$, and $W_i(\mathbf{x}_i-\bar{\mathbf{x}})$.  Let $\hat{Q}_{1}(\yobstilde, \bfW)$ be the sample slope coefficient in the OLS regression of $y_i(Z_{i}) - Z_{i}\bar{\tau}$ on $\mathbf{x}_{i}$ in the group of individuals for which $W_{i} = 1$; similarly, let $\hat{Q}_{0}(\yobstilde, \bfW)$ be the sample slope coefficient in the OLS regression of $y_i(Z_{i}) - Z_{i}\bar{\tau}$ on $\mathbf{x}_{i}$ in the group of individuals for which $W_{i} = 0$.  For convenience of notation, denote $N^{-1}\sum_{i = 1}^{N} \tilde{y}_{i}(Z_i)$ by $\overline{\tilde{y}}(\mathbf{Z})$. 

    Consequently
    \begin{align*}
        \hat{\varepsilon}_i(\yobstilde, \bfW) = \begin{cases}
     \tilde{y}_{i}(Z_i) - \overline{\tilde{y}}(\mathbf{Z}) - (\mathbf{x}_{i} - \overline{\mathbf{x}})^\T \hat{Q}_{1}(\yobstilde, \bfW); & \text{if } W_{i} = 1\\
     \tilde{y}_{i}(Z_i) - \overline{\tilde{y}}(\mathbf{Z}) - (\mathbf{x}_{i} - \overline{\mathbf{x}})^\T \hat{Q}_{0}(\yobstilde, \bfW); & \text{if } W_{i} = 0.
    \end{cases}
\end{align*}
	these are random and depend upon both $\bfZ$ and $\bfW$.

    By standard OLS theory the slope coefficient matrix $\hat{Q}_{1}(\yobstilde, \bfW)$ is
	\begin{multline*}
		\hat{Q}_{1}(\yobstilde, \bfW) = \left(\frac{1}{n_{1} - 1}\sum_{i = 1}^{N}W_{i}\left(\mathbf{x}_{i} - n_{1}^{-1}\sum_{j = 1}^{N}W_{j}\mathbf{x}_{j}\right)\left(\mathbf{x}_{i} - n_{1}^{-1}\sum_{j = 1}^{N}W_{j}\mathbf{x}_{j}\right)^{\T} \right)^{-1}\times \\ \left(\frac{1}{n_{1} - 1}\sum_{i = 1}^{N}W_{i}\left(\mathbf{x}_{i} - n_{1}^{-1}\sum_{j = 1}^{N}W_{j}\mathbf{x}_{j}\right)\left(\left(y_i(Z_{i}) - Z_{i}\bar{\tau}\right) - n_{1}^{-1}\sum_{j = 1}^{N}W_{j}\left(y_j(Z_{j}) - Z_{j}\bar{\tau} \right)\right) \right)
	\end{multline*}

    In Lemma A.5 of \citet{agnosticRegAdj}, it is shown that the first term of $\hat{Q}_{1}(\yobstilde, \bfW)$ converges in probability to $\Sigma_{x, \infty}^{-1}$.  Now we turn our analysis to the second term of $\hat{Q}_{1}(\yobstilde, \bfW)$; denote this term by $M_{1}(\yobstilde, \bfW)$.

    The centering of the potential outcomes under treatment that occurred when translating $\y_{i}(z)$ to $\tilde{y}_{i}(z)$ does not impact Assumptions~\ref{supp: asm: non-degen sampling limit}, \ref{supp: asm: means and covs stabilize}, \ref{supp: asm: worst case vanishes}, and \ref{supp: asm: bounded fourth moment}.  Thus, the finite population strong law for second moments \citep[Lemma A.3, Part ii]{RandTestsWeakNulls}  applies to the sample covariances
    \begin{gather*}
        \hat{\Sigma}_{\tilde{y}(1)x}  = \frac{1}{n_{1} - 1}\sum_{i = 1}^{N}Z_i\left(\mathbf{x}_i - n_1^{-1}\sum_{j=1}^NZ_j\mathbf{x}_j\right)\left(\tilde{y}_i(1) - n_1^{-1}\sum_{j=1}^NZ_j\tilde{y}_j(1)\right)^\T \\
        \text{and}\\
        \hat{\Sigma}_{\tilde{y}(0)x}  = \frac{1}{n_{0} - 1}\sum_{i = 1}^{N}(1 - Z_i)\left(\mathbf{x}_i - n_0^{-1}\sum_{j=1}^N(1-Z_j)\mathbf{x}_j\right)\left(\tilde{y}_i(0) - n_0^{-1}\sum_{j=1}^N(1 - Z_j)\tilde{y}_j(0)\right)^\T.
    \end{gather*}

    Since the centering of the potential outcomes under treatment that occurred when translating $\y_{i}(z)$ to $\tilde{y}_{i}(z)$ does not impact the above covariance structure, it follows from Lemma A.3 of \citet{RandTestsWeakNulls} that $\hat{\Sigma}_{\tilde{y}(1)x} \convAS[] \Sigma_{y(1)x, \infty}$ and $\hat{\Sigma}_{\tilde{y}(0)x} \convAS[] \Sigma_{y(0)x, \infty}$ (This statement relies upon Assumptions \SLLNAssumptionsSupp.).  Condition on a sequence of treatment allocations $\{\bfZ\}_{N \in \N}$ for the growing sequence of experiments such that $\hat{\Sigma}_{\tilde{y}(1)x} \given \bfZ \rightarrow \Sigma_{y(1)x, \infty}$ and $\hat{\Sigma}_{\tilde{y}(0)x} \given \bfZ \rightarrow \Sigma_{y(0)x, \infty}$; this requirement is met for all $\bfZ$ except for a set of measure zero.

    Fix the treatment allocations $\{\bfZ\}_{N \in \N}$; after this conditioning we are left with fully determined ``imputed potential outcomes'':
    \begin{itemize}
        \item $\{\tilde{y}_{i}(Z_{i})\}_{i = 1}^{N}$ for the ``imputed treatment potential outcomes''
        \item $\{\tilde{y}_{i}(Z_{i})\}_{i = 1}^{N}$ for the ``imputed control potential outcomes''
    \end{itemize}
    The imputed population can be envisioned as the population that an experiment would imagine to exist if she observed outcomes $\tilde{y}(\bfZ)$ and believed that Fisher's sharp null held.  Consider $\bfW$ as a treatment allocation for this imputed population.  Under this interpretation $M_{1}(\yobstilde, \bfW)$ is the sample covariance between covariates and the imputed outcomes observed under ``treatment'' $W_{i} = 1$.  Instead of working with $M_{1}(\yobstilde, \bfW)$, we first focus attention to the underlying quantity that $M_{1}(\yobstilde, \bfW)$ seeks to estimate: the covariance between covariates and the imputed potential outcomes $\{\tilde{y}_{i}(Z_{i})\}_{i = 1}^{N}$; we proceed with analysis based upon a fixed sequence of treatment allocations $\bfZ$.  This quantity is
    \begin{align*}
        \Sigma_{imputed,\, \tilde{y}(1)x} &= \frac{1}{N - 1}\sum_{i = 1}^{N}\left(x_{i} - N^{-1}\sum_{j = 1}^{N}\mathbf{x}_{j} \right)\left(\tilde{y}_{i}(Z_{i}) - N^{-1}\sum_{j = 1}^{N}\tilde{y}_{j}(Z_{j}) \right)^\T\\
        &= \frac{1}{N - 1}\sum_{i \given Z_{i} = 1}\left(\mathbf{x}_{i} - N^{-1}\sum_{j = 1}^{N}\mathbf{x}_{j} \right)\left(\tilde{y}_{i}(1) - N^{-1}\sum_{j = 1}^{N}\tilde{y}_{j}(Z_{j}) \right)^\T\\
        &\quad + \frac{1}{N - 1}\sum_{i \given Z_{i} = 0}\left(\mathbf{x}_{i} - N^{-1}\sum_{j = 1}^{N}\mathbf{x}_{j} \right)\left(\tilde{y}_{i}(0) - N^{-1}\sum_{j = 1}^{N}\tilde{y}_{j}(Z_{j}) \right)^\T.
    \end{align*}
    By the strong laws for the sample means, this shares the same limit as
    \begin{multline*}
        \frac{1}{N - 1}\sum_{i \given Z_{i} = 1}\left(\mathbf{x}_{i} - n_1^{-1}\sum_{j = 1}^{N}Z_j\mathbf{x}_{j} \right)\left(\tilde{y}_{i}(1) - n_1^{-1}\sum_{j=1}^NZ_j\tilde{y}_j(1) \right)^\T\\
        + \frac{1}{N - 1}\sum_{i \given Z_{i} = 0}\left(\mathbf{x}_{i} - n_0^{-1}\sum_{j = 1}^{N}(1-Z_j)\mathbf{x}_{j} \right)\left(\tilde{y}_{i}(0) - n_0^{-1}\sum_{j=1}^N(1 - Z_j)\tilde{y}_j(0) \right)^\T.
    \end{multline*}
    In turn, these two terms can be rewritten as
    \begin{equation*}
        \frac{n_{1} - 1}{N - 1} \hat{\Sigma}_{\tilde{y}(1)x} + \frac{n_{0} - 1}{N - 1} \hat{\Sigma}_{\tilde{y}(0)x}
    \end{equation*}
    which limits to $p\Sigma_{y(1)x, \infty} + (1 - p)\Sigma_{y(0)x, \infty}$ for all $\bfZ$ except for a set of measure zero.  Since the centering of the potential outcomes under treatment that occurred when translating $\y_{i}(z)$ to $\tilde{y}_{i}(z)$ does not impact Assumptions~\ref{supp: asm: non-degen sampling limit}, \ref{supp: asm: means and covs stabilize}, \ref{supp: asm: worst case vanishes}, and \ref{supp: asm: bounded fourth moment} it follows from Lemma 1 of \citet{agnosticRegAdj} that
    $$M_{1}(\yobstilde, \bfW) \given \bfZ \convP[] p\Sigma_{y(1)x, \infty} + (1 - p)\Sigma_{y(0)x, \infty}$$
    almost surely in $\bfZ$; combining this with Lemma~\ref{supp: lem: condition convergence in prob to joint convergence in prob} implies that
    $$M_{1}(\yobstilde, \bfW) \convP[] p\Sigma_{y(1)x, \infty} + (1 - p)\Sigma_{y(0)x, \infty}.$$

    Thus
    $$\hat{Q}_{1}(\yobs, \bfW) \convP[] \Sigma_{x, \infty}^{-1}\left( p\Sigma_{y(1)x, \infty} + (1 - p)\Sigma_{y(0)x, \infty}\right) = \tilde{Q}.$$

    The remainder of the proof proceeds in direct analogy with the proof used for Proposition~\ref{supp: prop: reg adj vs residuals_randomization}.
\end{proof}

\begin{remark}
    The utility of Lemma~\ref{supp: lem: condition convergence in prob to joint convergence in prob} in the proof of Proposition~\ref{supp: prop: reg adj vs residuals_permutation} arose from our choice to analyze $M_{1}(\yobstilde, \bfW)$ through conditioning upon treatment allocation $\bfZ$.  With this conditioning argument, Assumption~\ref{supp: asm: bounded fourth moment} is leveraged to attain strong laws with respect to randomness in $\bfZ$; these guarantee that arguments based upon conditioning on $\bfZ = \bfz$ hold for all but a set of measure zero.  An alternative approach to arrive at the statement $M_{1}(\yobstilde, \bfW) \convP[] p\Sigma_{y(1)x, \infty} + (1 - p)\Sigma_{y(0)x, \infty}$ may be to work unconditionally: appealing to a suitable weak law while allowing for randomness in both $\bfZ$ and $\bfW$.  With an approach of this nature, Assumption~\ref{supp: asm: bounded fourth moment} may be stronger than necessary. 

\end{remark}

\subsection{Proof of Proposition 3}
    First we show that
    \begin{equation}\label{supp: eqn: cond var randomization reg adj}
        \hat{V}_{reg}(\yobs, \bfZ)  - V^{(\varepsilon)}_{\tau\tau} \overset{p}{\rightarrow} \Delta,
    \end{equation} with $\Delta$ defined in the statement of Proposition 3. From the proof of Proposition~\ref{supp: prop: reg adj vs residuals_randomization}
    \begin{multline*}
        \hat{V}_{reg}(\yobs, \bfZ) = \frac{N}{n_1}\frac{1}{n_1 - 1}\sum_{i = 1}^{N}Z_i\left(\varepsilon_{i}(1) - \frac{1}{n_1}\sum_{j = 1}^{N}Z_j\varepsilon_{j}(1)\right)^2 \\
        + \frac{N}{n_0}\frac{1}{n_0 - 1}\sum_{i = 1}^{N}(1 - Z_i)\left(\varepsilon_{i}(0) - \frac{1}{n_0}\sum_{j = 1}^{N}(1 - Z_j)\varepsilon_{j}(0)\right)^2 \\
        + o_{P}(1).
    \end{multline*}
    Since $N/n_1 \rightarrow p^{-1}$ and $N/n_0 \rightarrow (1 - p)^{-1}$ this has the same limit as $N \rightarrow \infty$ as
    \begin{multline*}
        \frac{1}{p}\frac{1}{n_1 - 1}\sum_{i = 1}^{N}Z_i\left(\varepsilon_{i}(1) - \frac{1}{n_1}\sum_{j = 1}^{N}Z_j\varepsilon_{j}(1)\right)^2 \\
        + \frac{1}{1 - p}\frac{1}{n_0 - 1}\sum_{i = 1}^{N}(1 - Z_i)\left(\varepsilon_{i}(0) - \frac{1}{n_0}\sum_{j = 1}^{N}(1 - Z_j)\varepsilon_{j}(0)\right)^2.
    \end{multline*}

    Thus, \eqref{supp: eqn: cond var randomization reg adj} holds by the weak law of large numbers for second moments \citep[Lemma A.1]{agnosticRegAdj} and second part of Theorem 2 from \citet{agnosticRegAdj}.

    Next we show that
    \begin{equation}\label{supp: eqn: cond var permutation reg adj}
        \hat{V}_{reg}(\yobs, \mathbf{W}) - \tilde{V}^{(\tilde{\varepsilon})}_{\tau\tau} \overset{p}{\rightarrow} 0.
    \end{equation}

    By the proof of Proposition~\ref{supp: prop: reg adj vs residuals_permutation}
    \begin{multline}\label{eqn: asymptotically equiv variance}
        \hat{V}_{reg}(\yobs, \bfW) = \frac{N}{n_1}\frac{1}{n_1 - 1}\sum_{i = 1}^{N}W_i\left(\tilde{\varepsilon}_{i}(Z_i) - \frac{1}{n_1}\sum_{j = 1}^{N}W_j\tilde{\varepsilon}_{j}(Z_j)\right)^2 \\
        + \frac{N}{n_0}\frac{1}{n_0 - 1}\sum_{i = 1}^{N}(1 - W_i)\left(\tilde{\varepsilon}_{i}(Z_i) - \frac{1}{n_0}\sum_{j = 1}^{N}(1 - W_j)\tilde{\varepsilon}_{j}(Z_j)\right)^2 \\
        + o_{P}(1).
    \end{multline}


    By conditioning upon $\bfZ$, an argument similar to that used to analyze $M_{1}(\yobstilde, \bfW)$ in the proof of Proposition~\ref{supp: prop: reg adj vs residuals_permutation} can then be applied to compute the probability limits of the first two terms in \eqref{eqn: asymptotically equiv variance} almost surely with respect to the conditioning variable $\bfZ$.  Then leveraging Lemma~\ref{supp: lem: condition convergence in prob to joint convergence in prob} yields that the probability limit is the same when considering randomness in both $\bfZ$ and $\bfW$.  Finally, using $N/n_1 \rightarrow p^{-1}$ and $N/n_0 \rightarrow (1 - p)^{-1}$ yields that

    \begin{equation*}
        \hat{V}_{reg}(\yobs, \bfW) \convP[]\frac{1}{p}\Sigma_{\tilde{\varepsilon}(1), \infty} + \frac{1}{1 - p}\Sigma_{\tilde{\varepsilon}(1), \infty} = \tilde{V}^{{(\tilde{\varepsilon})}}_{{\tau}{\tau}}.
    \end{equation*}



\section{An example for paired designs} \label{supp: sec: paired designs}
    The main text focuses upon rerandomized experimental designs.  Since a completely randomized experiment is simply a rerandomized experiment with trivial balance criterion, the results of the main text automatically apply to completely randomized experiments as well.  However, Gaussian prepivoting is not limited to just these contexts.  Here we illustrate the utility of Gaussian prepivoting for a matched-pair experimental design.  Before describing the exact details of Gaussian prepivoting for paired designs, we prove a generalization of Theorem~\ref{supp: thm: gaussian} and Theorem~2.

    Suppose that $\mathbf{A}(\cdot, \cdot)$ is a function such that under the weak null $H_{N}$ and for some positive definite matrices $V$ and $\tilde{V}$,
    \begin{align}
        \begin{pmatrix} \mathbf{A}(\yobs, \bfZ) \\ \sqrt{N}\hat{\bm{\delta}}(\mathbf{x}, \bfZ) \end{pmatrix} \convD[] \Normal{\mathbf{0}}{\begin{pmatrix} V_{AA} & V_{A\delta} \\ V_{\delta A} & V_{\delta \delta} \end{pmatrix}};\label{eqn: limit cov for A rand}\\
        \begin{pmatrix} \mathbf{A}(\tilde{\mathbf{y}}(\mathbf{Z}), \bfW) \\ \sqrt{N}\hat{\bm{\delta}}(\mathbf{x}, \bfW) \end{pmatrix} \convD[] \Normal{\mathbf{0}}{\begin{pmatrix} \tilde{V}_{AA} & \tilde{V}_{A\delta} \\ \tilde{V}_{\delta A} & \tilde{V}_{\delta \delta} \end{pmatrix}}\label{eqn: limit cov for A reference}.
    \end{align}
    \begin{remark}
       With the introduction of the covariance matrices in \eqref{eqn: limit cov for A rand} and \eqref{eqn: limit cov for A reference} we must modify Condition~\ref{supp: cond: var} slightly.  It now becomes: with $\mathbf{W}, \mathbf{Z}$ independent, both uniformly distributed over $\Omega$, and for some  $\Delta \succeq 0$, $\Delta \in \mathbb{R}^{d\times d}$,
        \begin{align*}
            \hat{V}({\mathbf{y}}(\mathbf{Z}), \mathbf{Z}) - \begin{pmatrix} V_{AA} & V_{A\delta} \\ V_{\delta A} & V_{\delta \delta} \end{pmatrix} &\convP[] \begin{pmatrix}\Delta & 0_{d,k}\\  0_{k,d} & 0_{k,k} \end{pmatrix};\\
            \hat{V}(\tilde{\mathbf{y}}(\mathbf{Z}), \mathbf{W}) - \begin{pmatrix} \tilde{V}_{AA} & \tilde{V}_{A\delta} \\ \tilde{V}_{\delta A} & \tilde{V}_{\delta \delta} \end{pmatrix}  &\convP[] 0_{(d+k), (d+k)}.
        \end{align*}
    \end{remark}
    \setcounter{theorem}{2}
    \begin{theorem}\label{supp: thm: generalized gaussian prepiv}
        Suppose that $T(\yobs, \bfZ) = f_{\hat{\xi}}(\mathbf{A}(\yobs, \bfZ))$ for some $f_\eta$ and $\hat{\xi}$ satisfying Conditions \ref{supp: cond: f} and \ref{supp: cond: xi} .  If we employ a covariance estimator $\hat{V}$ satisfying the revised Condition \ref{supp: cond: var} when forming the prepivoted test statistic $G(\yobs, \bfZ)$ then, under $H_N: \bar{\bm{\tau}} = 0$ and the assumption that \eqref{eqn: limit cov for A rand} holds, $G(\mathbf{y}(\mathbf{Z}), \mathbf{Z})$ converges in distribution to a random variable $\tilde{U}$ taking values in $[0,1]$ satisfying
        \begin{align*}
            \mathbb{P}(\tilde{U} \leq t) \geq t,
        \end{align*}
        for all $t \in [0,1]$. Furthermore, if \eqref{eqn: limit cov for A reference} holds, then the distribution $\hatPerm_G(t)$ satisfies
        \begin{align*}
        \hatPerm_G(t)\overset{p}{\rightarrow} t
        \end{align*}
        for all $t\in [0,1]$.
    \end{theorem}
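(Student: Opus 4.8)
The plan is to reprise, essentially verbatim, the proof of Theorem~\ref{supp: thm: gaussian}, replacing the scaled difference in means $\sqrt{N}\hat{\bm{\tau}}$ throughout by the abstract linearization $\mathbf{A}(\cdot,\cdot)$ and invoking the hypothesized joint central limit theorems \eqref{eqn: limit cov for A rand} and \eqref{eqn: limit cov for A reference} wherever that argument called upon the finite population central limit theorem of \citet{FiniteCLT} or Proposition~1. The difference in means entered the earlier proof only through three features: joint asymptotic Gaussianity with $\sqrt{N}\hat{\bm{\delta}}$, Condition~\ref{supp: cond: xi} on $\hat{\xi}$, and Condition~\ref{supp: cond: var} on $\hat{V}$. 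All three are in force here once \eqref{eqn: limit cov for A rand}, \eqref{eqn: limit cov for A reference}, and the revised Condition~\ref{supp: cond: var} are assumed, and the positive definiteness built into those limiting covariances supplies $V_{\delta\delta}\succ 0$ and $\tilde{V}_{\delta\delta}\succ 0$, which is all that the conditioning $\mid\phi(\cdot)=1$ and Lemmas~\ref{supp: lem: gaussian measure continuous}--\ref{supp: lem: prepivoting is a continuous function} require.

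For the randomization distribution, under $H_N$ combining \eqref{eqn: limit cov for A rand} with Condition~\ref{supp: cond: phi} (passing to the conditional law via the continuity-set argument underlying Corollary~A1 of \citet{asymptoticsOfRerand}) gives $\mathbf{A}(\yobs,\bfZ)\overset{d}{\rightarrow}\mathbf{C}$, the law of $\mathbf{A}_1\mid\phi(\mathbf{B}_1)=1$ with $(\mathbf{A}_1,\mathbf{B}_1)^\T$ mean-zero Gaussian of covariance the matrix $V$ appearing in \eqref{eqn: limit cov for A rand}. By Condition~\ref{supp: cond: xi} and the revised Condition~\ref{supp: cond: var}, $\hat{\xi}(\yobs,\bfZ)\overset{p}{\rightarrow}\xi$ and $\hat{V}(\yobs,\bfZ)\overset{p}{\rightarrow}\bar{\bar{V}}$, the matrix obtained from $V$ by adding $\Delta\succeq 0$ to its upper-left block, jointly with the convergence of $\mathbf{A}$; Slutsky's theorem and the joint continuity of $f_\eta$ in Condition~\ref{supp: cond: f} then give joint convergence of $\bigl(\hat{V}(\yobs,\bfZ),\hat{\xi}(\yobs,\bfZ),f_{\hat{\xi}(\yobs,\bfZ)}(\mathbf{A}(\yobs,\bfZ))\bigr)$ to $(\bar{\bar{V}},\xi,f_\xi(\mathbf{C}))$. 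Applying the continuous mapping theorem with the jointly continuous map $h$ of Lemma~\ref{supp: lem: prepivoting is a continuous function}, $G(\yobs,\bfZ)$ converges in distribution to
\[
\frac{\gamma^{(d+k)}_{\bm{0},\bar{\bar{V}}}\left\{(\mathbf{a},\mathbf{b})^\T: f_{\xi}(\mathbf{a})\leq f_{\xi}(\mathbf{C})\;\wedge\;\phi(\mathbf{b})=1\right\}}{\gamma^{(k)}_{\bm{0},\bar{\bar{V}}_{\delta\delta}}\left\{\mathbf{b}:\phi(\mathbf{b})=1\right\}}.
\]
Because $\bar{\bar{V}}\succeq V$ with identical $\delta\delta$ block, Lemma~1 of the main text (Anderson's theorem, using that $f_\xi$ is quasi-convex and mirror-symmetric by Condition~\ref{supp: cond: f}) shows that $f_\xi(\mathbf{A}_2)\mid\phi(\mathbf{B}_2)=1$, with $(\mathbf{A}_2,\mathbf{B}_2)^\T$ mean-zero Gaussian of covariance $\bar{\bar{V}}$, stochastically dominates $f_\xi(\mathbf{A}_1)\mid\phi(\mathbf{B}_1)=1\overset{d}{=}f_\xi(\mathbf{C})$; the displayed limit is exactly $f_\xi(\mathbf{C})$ composed with the distribution function of its stochastically dominating variable, so by Lemma~2 of the main text it is stochastically dominated by $\mathrm{Unif}[0,1]$, giving $\mathbb{P}(\tilde{U}\leq t)\geq t$.

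For the reference distribution, interpret \eqref{eqn: limit cov for A reference} as weak convergence in probability of the $\bfW$-law of $(\mathbf{A}(\tilde{\mathbf{y}}(\mathbf{Z}),\bfW),\sqrt{N}\hat{\bm{\delta}}(\mathbf{x},\bfW))^\T$ conditional on $\bfZ$, as in Proposition~1. By Lemma~4.1 of \citet{lowDim_HighDim} this is equivalent to joint convergence of two independent $\bfW$-copies to two i.i.d. Gaussians of covariance $\tilde{V}$, which Condition~\ref{supp: cond: phi} passes to i.i.d. copies $\mathbf{D},\mathbf{D}'$ of $\tilde{\mathbf{A}}\mid\phi(\tilde{\mathbf{B}})=1$ with $(\tilde{\mathbf{A}},\tilde{\mathbf{B}})^\T\sim\Normal{\mathbf{0}}{\tilde{V}}$. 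The revised Condition~\ref{supp: cond: var} and Condition~\ref{supp: cond: xi} give $\hat{V}(\tilde{\mathbf{y}}(\mathbf{Z}),\bfW)\overset{p}{\rightarrow}\tilde{V}$ and $\hat{\xi}(\tilde{\mathbf{y}}(\mathbf{Z}),\bfW)\overset{p}{\rightarrow}\tilde{\xi}$ jointly with the above, so the continuous mapping theorem applied to $h$ shows $(G(\yobs,\bfW),G(\yobs,\bfW'))$ converges in distribution to $\bigl(h(\tilde{V},\tilde{\xi},f_{\tilde{\xi}}(\mathbf{D})),h(\tilde{V},\tilde{\xi},f_{\tilde{\xi}}(\mathbf{D}'))\bigr)$. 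Since the Gaussian measure appearing in $h$ is generated by the very covariance $\tilde{V}$ underlying $\mathbf{D}$, exactly as in the proof of Theorem~\ref{supp: thm: gaussian} the quantity $h(\tilde{V},\tilde{\xi},f_{\tilde{\xi}}(\mathbf{D}))$ is the probability integral transform of $f_{\tilde{\xi}}(\mathbf{D})$ by its own conditional distribution function and is therefore exactly standard uniform; the converse direction of Lemma~4.1 of \citet{lowDim_HighDim} then yields $\hatPerm_G(t)\overset{p}{\rightarrow}t$ for all $t\in[0,1]$. The only genuine obstacle is bookkeeping: one must verify that each appeal in the proof of Theorem~\ref{supp: thm: gaussian} to a concrete property of $\sqrt{N}\hat{\bm{\tau}}$ — the finite population central limit theorem, Proposition~1, the explicit block forms of $V$ and $\tilde{V}$, and the identity $\tilde{\mathbf{y}}(\mathbf{Z})=\yobs$ under $H_N$ — is correctly replaced by its abstraction, and that the positive definiteness in \eqref{eqn: limit cov for A rand}--\eqref{eqn: limit cov for A reference} genuinely supplies the nondegeneracy that Lemmas~\ref{supp: lem: gaussian measure continuous}--\ref{supp: lem: prepivoting is a continuous function} and the conditioning on $\phi(\cdot)=1$ demand; no new analytic ingredient is needed.
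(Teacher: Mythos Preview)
Your proposal is correct and follows exactly the approach of the paper, which proves Theorem~\ref{supp: thm: generalized gaussian prepiv} by observing that the proof of Theorem~\ref{supp: thm: gaussian} carries over verbatim once $\sqrt{N}\hat{\bm{\tau}}(\cdot,\cdot)$ is replaced by $\mathbf{A}(\cdot,\cdot)$. Your write-up is in fact more detailed than the paper's own one-line proof, and your explicit identification of where \eqref{eqn: limit cov for A rand}, \eqref{eqn: limit cov for A reference}, and the revised Condition~\ref{supp: cond: var} substitute for the finite population central limit theorem, Proposition~1, and the original Condition~\ref{supp: cond: var} is exactly the bookkeeping the paper leaves implicit.
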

    \begin{proof}
       The proof of this theorem proceeds exactly as that of Theorem~\ref{supp: thm: gaussian}, but with $\sqrt{N}\hat{\bm{\tau}}(\cdot, \cdot)$ replaced by $\mathbf{A}(\cdot, \cdot)$.  
    \end{proof}

    The structure assumed in Theorem ~\ref{supp: thm: generalized gaussian prepiv} is commonly encountered in finite population causal inference across a host of experimental designs. Armed with Theorem~\ref{supp: thm: generalized gaussian prepiv} we turn to the problem of Gaussian prepivoting in paired designs.  
    Consider a population with $I$ matched pairs of individuals, so that the total population size is $N = 2I$.  Attributes of the $j$\textsuperscript{th} unit in the $i$\textsuperscript{th} pair are subscripted with $ij$; e.g. potential outcomes are $\bfy_{ij}(0)$ and $\bfy_{ij}(1)$.  For simplicity take $d = 1$, though these results are not bound to the univariate case.  For a paired design $\Omega = \Omega_{pair}$ with
    \begin{equation*}
        \Omega_{pair} := \left\{\bfz \in \{0, 1\}^{N} \; \Bigg| \; \sum_{j}\bfz_{ij} = 1 \; \forall\, i = 1, \ldots, I\right\}.
    \end{equation*}
    In words, allowable treatment allocations assign one unit of each pair to treatment and the remaining unit of the pair to control.  The average observed treated-minus-control difference in outcomes is
    \begin{equation*}
        \hat{\bm{\tau}}_{pair}(\yobs, \bfZ) := \frac{1}{I}\sum_{i = 1}^{I}\Bigg(\underbrace{(2Z_{i1}-1)\bigg(y_{i1}(Z_{i1}) - y_{i2}(Z_{i2})  \bigg)}_{\mathscr{T}_{i}(\yobs, \bfZ)}\Bigg),
    \end{equation*}
    with $\mathscr{T}_{i}(\yobs, \bfZ)$ representing the treated-minus-control difference in pair $i$. Subject to Conditions~1 and 2 of \citet{fog18reg} -- which are the paired-design analogues of our Assumptions~2 and 3(b) -- the random variable $\hat{\bm{\tau}}_{pair}(\yobs, \bfZ)$ obeys a finite population central limit theorem. This finite population central limit theorem for $\sqrt{I}\left( \hat{\bm{\tau}}_{pair}(\yobs, \bfZ) - \overline{\bm{\tau}}\right)$ can be derived from Theorem~1 of \cite{fog18reg} by dropping the regression-assisting terms.

    We estimate the variance of $\hat{\bm{\tau}}_{pair}(\yobs, \bfZ)$ via its classical Neyman-style estimator
    \begin{equation*}
        \hat{V}_{pair}(\yobs, \bfZ) := \frac{1}{I-1}\sum_{i = 1}^{I}\Big( \mathscr{T}_{i}(\yobs, \bfZ) - \hat{\bm{\tau}}_{pair}(\yobs, \bfZ)\Big)^{2}.
    \end{equation*}
    \citet{imaiPairsVariance} shows that $\hat{V}_{pair}(\yobs, \bfZ)$ is conservative with respect to the true variance of $\hat{\bm{\tau}}_{pair}(\yobs, \bfZ)$.  Under standard regularity conditions \citep[Appendix Lemma 8]{fog19student} there exists a constant $\nu^{2}  > 0$ such that $\sqrt{N}  \hat{\bm{\tau}}_{pair}(\tilde{\bfy}(\bfZ), \bfW)$ converges in distribution to $\Normal{0}{\nu^{2}}$ and by \citet[Appendix Lemma 11]{fog19student}
    \begin{equation*}
        \hat{V}_{pair}(\tilde{\bfy}(\bfZ), \bfW) := \frac{1}{I-1}\sum_{i = 1}^{I}\Big(\mathscr{T}_{i}(\tilde{\bfy}(\bfZ), \bfW) - \hat{\bm{\tau}}_{pair}(\tilde{\bfy}(\bfZ), \bfW)\Big)^{2} \convP[]  \nu^{2}.
    \end{equation*}

    Select a function $f_{\eta}(\cdot)$ which satisfies Conditions~\ref{supp: cond: f} and \ref{supp: cond: xi}, then form the prepivoted test statistic
    \begin{equation*}
        G_{pair}(\yobs, \bfZ) = \gamma^{(1)}_{0, \hat{V}_{pair}}\left\{\mathbf{a}: {f}_{\hat{\xi}}(\mathbf{a}) \leq {f}_{\hat{\xi}}(\hat{\bm{\tau}}_{pair}(\yobs, \bfZ))\right \}.
    \end{equation*}
    Theorem~\ref{supp: thm: generalized gaussian prepiv} applies to $G_{pair}(\yobs, \bfZ)$ and so prepivoting naturally extends to paired experimental designs.  In fact, for paired designs there are numerous candidates for the variance estimator $\hat{V}$ that extend beyond the Neyman-style estimator $\hat{V}_{pair}$; for examples, see the regression-assisted variance estimators of \citet{fog18reg} or the pairs of pairs estimator discussed in \citet{abadieImbensPairsofPairs} and \citet{fog19encourage}.

\color{black}

\section{Experiments with many treatments}\label{supp: sec: many arms}
Theorems~\ref{supp: thm: gaussian} and 2 are not limited to experiments with only two treatment arms (e.g. treatment versus control).  In this section, we show that these results extend naturally to experiments with an arbitrary finite number of treatment arms. For simplicity we present notation and results for completely randomized designs, but extensions are available to rerandomized designs as in the two-armed case. Special cases of balance criteria for general multi-armed rerandomized designs are discussed in \citet[Section 5.2]{reRandMorganRubin}; for rerandomization in factorial experiments \citet{rerandForFactorial} and \citet{branson2016} provide extensive literature.

 Consider an experiment with $A$ arms. The treatment indicator for each individual, $Z_i$, now takes values in $\{0,\ldots, A-1\}$. The potential outcomes under the various treatment options are $\bfy_{i}(0), \ldots, \bfy_{i}(A - 1)$.  For convenience denote $\{0, \ldots, A - 1\}$ by $[A-1]$.  For fixed values $n_{0}, \dots, n_{A - 1} \in \N$ which sum to $N$ let $\Omega_{CRE, A}$ be the set of treatment allocation vectors
\begin{equation*}
    \Omega_{CRE, A} := \left\{\bfz \in [A - 1]^{N} \,\Bigg|\, \sum_{i = 1}^{N}\indicatorFunction{z_{i} = a} = n_{a}, \;\forall\, a \in [A - 1] \right\}.
\end{equation*}
Modify Assumption~\ref{supp: asm: non-degen sampling limit} to be that $N^{-1}n_{a} \rightarrow p_{a}$ with $p_{a} \in (0, 1)$ for all $a$.  In words, no treatment arm is asymptotically degenerate.  The remaining assumptions are modified to hold for $z \in [A - 1]$ instead of just $z \in \{0, 1\}$.  In the multi-arm setting we redefine Fisher's sharp null to be
\begin{equation*}
     H_{F}^{(A)}:\, \bfy_{i}(0) = \bfy_{i}(1) = \cdots = \bfy_{i}(A - 1)\; \forall\, i = 1, \ldots, N.
\end{equation*}
The corresponding generalization of Neyman's weak null is
\begin{equation*}
     H_{N}^{(A)}:\, N^{-1}\sum_{i = 1}^{N}\bfy_{i}(0) = N^{-1}\sum_{i = 1}^{N}\bfy_{i}(1) = \cdots = N^{-1}\sum_{i = 1}^{N}\bfy_{i}(A - 1).
\end{equation*}
See \citet{din17ls} for discussion of this generalization of the sharp and weak nulls.  Further generalizations of these nulls can be found in \citet{RandTestsWeakNulls}.

Denote the vector of the average observed outcome in treatment group $a$ by
\begin{equation*}
    \hat{\bar{\bfy}}(a) = n_a^{-1}\sum_{i \st Z_i = a}\bfy_{i}(a)
\end{equation*}
and the $d \times A$ matrix of all such averages by
\begin{equation*}
    \hat{\bar{\mathbf{Y}}} = \begin{bmatrix} \hat{\bar{\bfy}}(0) &
    \cdots     \hat{\bar{\bfy}}(A - 1)
                            \end{bmatrix}.
\end{equation*}

Consider a matrix $C_{\bfy}$ of dimensions $A \times d'$ for some $d'$.  We stipulate that this matrices is comprised of column-wise contrasts; i.e., each column contains some non-zero element but sums to zero. In place of $\hat{\bm{\tau}}(\yobs, \bfZ)$ we now turn to the weighted treatment effect estimator
\begin{equation*}
    \hat{\bm{\tau}}_{C}(\yobs, \bfZ) = \text{vec}\left(\hat{\bar{\mathbf{Y}}}C_{\bfy}\right)
\end{equation*}
where the $\text{vec}(\cdot)$ operator reshapes $d$-by-$d'$ matrices to $(dd')$-length vectors by vertically concatenating the columns.  In the classical two-armed experiment $C_{\mathbf{y}} = [-1, 1]^{\T}$ returns the standard difference in means as $\hat{\bm{\tau}}_{C}(\yobs, \bfZ)$.  

We extend the notation $\hat{\Sigma}_{y(a)}$ to denote the sample variance estimator in the population which received treatment arm $a$.  Mimicking the argument of \citet[Section 2.3]{RandTestsWeakNulls} with the natural extension to multivariate outcomes gives that an asymptotically conservative covariance estimator for $\sqrt{N}\text{vec}\left(\hat{\bar{\mathbf{Y}}}\right)$ is
\begin{equation*}
    \hat{D}(\yobs, \bfZ) = \bigoplus_{a \in [A - 1]}\left(\frac{N}{n_{a}}\hat{\Sigma}_{y(a)}\right),
\end{equation*}
where $\oplus$ denotes the direct sum of matrices, resulting in a block-diagonal matrix of dimension $Ad\times Ad$ with $(N/n_{a-1})\hat{\Sigma}_{y(a-1)}$ in the $a$\textsuperscript{th} of $A$ blocks.
\citet{vecFacts} present numerous algebraic properties of the Kronecker product and the vectorization operator; exploiting their equation (6) yields
\begin{equation*}
        \text{vec}\left( \hat{\bar{\mathbf{Y}}}C_{\bfy}\right) = (C_{\bfy}^{\T} \tensor I)\text{vec}(\hat{\bar{\mathbf{Y}}})
\end{equation*}
Consequently, to produce a Neyman-style conservative covariance estimator for $\sqrt{N}\hat{\bm{\tau}}_{C}(\yobs, \bfZ)$ we form
\begin{align*}
    \hat{V}_{C}(\yobs, \bfZ) &:= (C_{\bfy}^{\T} \tensor I)\left(\bigoplus_{a \in [A - 1]}\left(\frac{N}{n_{a}}\hat{\Sigma}_{y(a)}\right) \right)(C_{\bfy}^{\T} \tensor I)^{\T}\\
    &= (C_{\bfy}^{\T} \tensor I) \hat{D}(\yobs, \bfZ)(C_{\bfy}^{\T} \tensor I)^{\T}\\
\end{align*}

Since $\hat{D}(\yobs, \bfZ)$ is an asymptotically conservative covariance estimator for $\sqrt{N}\text{vec}\left(\hat{\bar{\mathbf{Y}}}\right)$, the covariance estimator $\hat{V}_{C}(\yobs, \bfZ)$ is conservative in the flavor of Condition~\ref{supp: cond: var} but with the natural modifications taken to account for our focus on $\sqrt{N}\hat{\bm{\tau}}_{C}(\yobs, \bfZ)$.

An analysis of of $\hat{D}(\yobs, \bfW)$ under $H_{N}^{(A)}$ in the univariate outcomes case is included in Appendix A2 of \citet[Appendix page 6]{RandTestsWeakNulls}.  The extension to multivariate outcomes follows the same reasoning, replacing scalar variance estimators with matrix-valued covariance estimators.  Their analysis takes a perspective conditional upon $\bfZ$, but our Lemma~\ref{supp: lem: condition convergence in prob to joint convergence in prob} ports their results into our unconditional framework.  Furthermore, \citet[Appendix A2]{RandTestsWeakNulls} provides a detailed analysis of the asymptotic behavior of $\sqrt{N}(\hat{\bm{\tau}}_{C}({\bfy}(\bfZ), \bfW)$ conditional upon $\bfZ$ and $H_{N}^{(A)}$.  Using Hoeffding's Lemma -- see for instance \citet[Lemma 4.1]{lowDim_HighDim} -- in an approach mirroring that used in our proof of Theorem~\ref{supp: thm: gaussian} the analysis of \citet{RandTestsWeakNulls} provides an unconditional understanding of $\sqrt{N}(\hat{\bm{\tau}}_{C}({\bfy}(\bfZ), \bfW)$ under the weak null. Combining their results gives that
\begin{equation}\label{eqn: permutation variance estimator exact for multiarm}
    \hat{V}_{C}(\yobs, \bfW) - \mathbb{V}\left(\sqrt{N}(\hat{\bm{\tau}}_{C}({\bfy}(\bfZ), \bfW)\right) \convP[] \mathbf{0}_{(dd') \times (dd')}
\end{equation}
where $\mathbb{V}\left(\sqrt{N}(\hat{\bm{\tau}}_{C}({\bfy}(\bfZ), \bfW)\right)$ denotes the variance of $\sqrt{N}(\hat{\bm{\tau}}_{C}({\bfy}(\bfZ), \bfW)$.


These results lay the basis for applying Gaussian prepivoting to test statistics $T(\yobs, \bfZ)$ of the form $f_{\hat{\xi}}\left(\hat{\bm{\tau}}_{C}(\yobs, \bfZ) \right)$ with $f_{\hat{\xi}}$ satisfying Conditions~\ref{supp: cond: f} and \ref{supp: cond: xi}.  The prepivoted test statistic takes its usual form
\begin{equation*}
    G_{C}(\mathbf{y}(\mathbf{Z}), \mathbf{Z}) = \gamma^{(dd')}_{\bm{0}, \hat{V}_{C}}\left\{\mathbf{a}: {f}_{\hat{\xi}}(\mathbf{a}) \leq f_{\hat{\xi}}\left(\hat{\bm{\tau}}_{C}(\yobs, \bfZ) \right)\right \}.
\end{equation*}
The central limit behavior of $\sqrt{N}\hat{\bm{\tau}}_{C}(\yobs, \bfZ)$ and asymptotic conservativeness of the Neyman-style variance estimator $\hat{V}_{C}(\yobs, \bfZ)$ apply to Theorem~\ref{supp: thm: generalized gaussian prepiv} to show that the true distribution $\Rand_{G_{C}}$ is asymptotically stochastically dominated by the standard uniform distribution.  The central limit behavior of $\sqrt{N}\hat{\bm{\tau}}_{C}(\yobs, \bfW)$ in conjunction with \eqref{eqn: permutation variance estimator exact for multiarm} implies that the reference distribution $\Perm_{G_{C}}$ limits weakly in probability to the standard uniform distribution; thereby furnishing inferences which are exact for $H_{F}^{(A)}$ and asymptotically conservative for $H_{N}^{(A)}$.

The only major difference between the results for $A > 2$ and $A = 2$ is the use of more general finite population central limit theorems for $\hat{\bm{\tau}}_{C}(\yobs, \bfZ)$ in place of those for $\hat{\bm{\tau}}(\yobs, \bfZ)$; such a central limit theorem is given by \citet[Theorem 5]{FiniteCLT}. Through particular choices for $C_{\mathbf{y}}$, $f_\eta$, and $\hat{\xi}$, we can recover the test statistic proposed in \citet{din17ls} for testing the weak null in multi-armed trials while additionally providing an alternative fix to the usual $F$-statistic to restore asymptotic sharp-dominance. Similarly, the same reasoning can be applied to asymptotically linear estimators (cf. Section~7 of the main text) and the proof of Theorem~2 does not change substantially.

\section{Exact and asymptotically valid confidence sets}
Our results readily extend to constructing confidence intervals which are both asymptotically conservative for the sample average treatment effect and exact if a constant treatment effect model holds.  To this end, we first describe how to test the hypotheses $H_{F, \mathbf{c}}:\, \bm{\tau}_{i} = \mathbf{c} \, \forall\, i = 1, \ldots, N$ and $H_{N, \mathbf{c}}:\, \overline{\bm{\tau}} = \mathbf{c}$ for any fixed $\mathbf{c} \in \R^{d}$.  Define $\tilde{\bfy}^{\mathbf{c}}(\bfZ)$ to be $\tilde{\mathbf{y}}^{\mathbf{c}}_i(Z_i) = \mathbf{y}_i(Z_i) - Z_i\mathbf{c}$.  Then by replacing $\hat{\bm{\tau}}(\yobs, \bfZ)$ and $\hat{\bm{\tau}}(\tilde{\bfy}(\bfZ), \bfW)$ with $\hat{\bm{\tau}}(\tilde{\mathbf{y}}^{\mathbf{c}}(\mathbf{Z}), \bfZ)$ and $\hat{\bm{\tau}}(\tilde{\mathbf{y}}^{\mathbf{c}}(\mathbf{Z}), \bfW)$, respectively, in Equation~(4) of the main paper the test for $H_{F}$ and $H_{N}$ developed in Section~5 of the main paper yields a single procedure that is exact for for $H_{F, \mathbf{c}}$ and asymptotically conservative for $H_{N, \mathbf{c}}$.

           Now, first suppose that one is willing to assume a constant effect model and desires a confidence set for the value $\mathbf{c}$ such that $\bm{\tau}_{i} = \mathbf{c}$ for all $i = 1, \ldots, N$.  Second, suppose that one wants a confidence set for the average treatment effect without assuming a constant effect; i.e., a confidence set for the value $\mathbf{c}$ for which $\overline{\bm{\tau}} = \mathbf{c}$.  Fix a confidence level $1-\alpha \in (0, 1)$ and let $C(T, \yobs, \bfZ)$ be the acceptance region for the test of $H_{F, \mathbf{c}}$ conducted at level $\alpha$ based upon the test statistic $T(\cdot, \cdot)$ evaluated over $\mathbf{c}\in \mathbb{R}^d$.

            Theorem~1 in the main text implies that randomization inference using the prepivoted test statistic $G(\cdot, \cdot)$ yields exact tests for $H_{F, \mathbf{c}}$ and asymptotically valid tests for $H_{N, \mathbf{c}}$. Leveraging the duality between hypothesis testing and confidence sets \citep[Theorem 9.2.2]{casellaBerger} implies the following corollary of Theorem 1.

            \begin{corollary}\label{cor: Gaussian prepiv confidence sets}
                Assume that the regularity conditions of Theorem~1 hold and consider the set $C(G, \yobs, \bfZ)$ formed by inverting the randomization test of $H_{F,\mathbf{c}}$ conducted using the prepivoted test statistic $G(\tilde{\bfy}^{\mathbf{c}}(\bfZ), \cdot)$. $C(G, \yobs, \bfZ)$ is both an asymptotically conservative confidence set for the sample average treatment effect and an exact confidence set under the additional assumption of constant treatment effects.
            \end{corollary}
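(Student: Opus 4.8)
The plan is to reduce Corollary~\ref{cor: Gaussian prepiv confidence sets} to Theorem~1 via the textbook duality between hypothesis tests and confidence sets. First I would fix an arbitrary $\mathbf{c}\in\R^{d}$ and note that the translated potential outcomes $\tilde{\mathbf{y}}^{\mathbf{c}}_i(z)=\mathbf{y}_i(z)-z\mathbf{c}$ constitute a finite population whose unit-level treatment effects are $\bm{\tau}_i-\mathbf{c}$, so that $\overline{\tilde{\bm{\tau}}^{\mathbf{c}}}=\overline{\bm{\tau}}-\mathbf{c}$. Hence testing $H_{F,\mathbf{c}}$ for the original population is \emph{verbatim} testing Fisher's sharp null $H_F$ for the translated population, and testing $H_{N,\mathbf{c}}$ is verbatim testing Neyman's weak null $H_N$ for the translated population; moreover, under $H_{F,\mathbf{c}}$ the translated outcomes $\tilde{\mathbf{y}}^{\mathbf{c}}(\bfZ)$ are exactly imputable from the observed data, which is what preserves finite-sample exactness of the randomization test.

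Next I would check that this deterministic shift is innocuous for the hypotheses of Theorem~1. Subtracting the fixed vector $\mathbf{c}$ from the treated potential outcomes does not disturb Assumption~\ref{asm: non-degen sampling limit}, leaves all limiting means and covariances well defined (Assumption~\ref{asm: means and covs stabilize}), and inflates the fourth-moment bound of Assumption~\ref{asm: bounded fourth moment} only by a finite constant depending on $\|\mathbf{c}\|$; likewise Conditions~\ref{cond: f}--\ref{cond: var} continue to hold for $f_{\hat{\xi}}$, $\hat{\xi}$, and $\hat{V}$ once these are recomputed on the translated data (for instance $f_\eta(\mathbf{t})=\mathbf{t}^{\T}\eta^{-1}\mathbf{t}$ is untouched and the Neyman-type variance estimator is shift-equivariant in the required way). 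Theorem~1 and its Corollary therefore apply to $G(\tilde{\mathbf{y}}^{\mathbf{c}}(\bfZ),\cdot)$: the induced randomization test $\varphi^{\mathbf{c}}_G(\alpha)$ satisfies $\mathbb{E}\{\varphi^{\mathbf{c}}_G(\alpha)\}\le\alpha$ for every $N$ under $H_{F,\mathbf{c}}$ and $\limsup_{N}\mathbb{E}\{\varphi^{\mathbf{c}}_G(\alpha)\}\le\alpha$ under $H_{N,\mathbf{c}}$.

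Then I would invert: set $C(G,\yobs,\bfZ)=\{\mathbf{c}\in\R^{d}:\varphi^{\mathbf{c}}_G(\alpha)=0\}$, so $\mathbf{c}\in C(G,\yobs,\bfZ)$ iff the level-$\alpha$ randomization test of $H_{F,\mathbf{c}}$ does not reject. The duality identity $\mathbb{P}\{\mathbf{c}_0\in C(G,\yobs,\bfZ)\}=1-\mathbb{E}\{\varphi^{\mathbf{c}_0}_G(\alpha)\}$ \citep[Theorem 9.2.2]{casellaBerger} closes the argument. If a constant-effect model holds with $\bm{\tau}_i\equiv\mathbf{c}_0$, then $H_{F,\mathbf{c}_0}$ is true, so the right-hand side is $\ge 1-\alpha$ for every sample size and $C(G,\yobs,\bfZ)$ has exact $(1-\alpha)$ coverage of $\mathbf{c}_0$. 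If instead one only assumes $\overline{\bm{\tau}}=\mathbf{c}_0$, then $H_{N,\mathbf{c}_0}$ holds and $\liminf_{N}\mathbb{P}\{\mathbf{c}_0\in C(G,\yobs,\bfZ)\}=1-\limsup_{N}\mathbb{E}\{\varphi^{\mathbf{c}_0}_G(\alpha)\}\ge 1-\alpha$, so the same set is an asymptotically conservative confidence set for the sample average treatment effect.

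I expect the only genuine work to be the stability check in the second paragraph --- confirming that Conditions~\ref{cond: f}--\ref{cond: var} and Assumptions~\ref{asm: non-degen sampling limit}--\ref{asm: bounded fourth moment} survive the shift $\mathbf{y}_i(z)\mapsto\mathbf{y}_i(z)-z\mathbf{c}$ for the $f_\eta$, $\hat{\xi}$, and $\hat{V}$ at hand --- which is routine but essential. No uniformity in $\mathbf{c}$ is required, since each coverage claim concerns a single true value $\mathbf{c}_0$, and measurability of the inverted set is standard; everything else is immediate from Theorem~1 and the cited duality theorem.
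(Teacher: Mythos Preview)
Your proposal is correct and follows essentially the same route as the paper: apply Theorem~1 to the shifted population $\tilde{\mathbf{y}}^{\mathbf{c}}(\bfZ)$ to obtain an exact test of $H_{F,\mathbf{c}}$ and an asymptotically conservative test of $H_{N,\mathbf{c}}$, then invoke the duality between tests and confidence sets \citep[Theorem 9.2.2]{casellaBerger}. The paper's own argument is a one-sentence appeal to exactly these two ingredients; you have simply made explicit the routine stability check that Assumptions~\ref{asm: non-degen sampling limit}--\ref{asm: bounded fourth moment} and Conditions~\ref{cond: f}--\ref{cond: var} survive the deterministic shift $\mathbf{y}_i(z)\mapsto\mathbf{y}_i(z)-z\mathbf{c}$, which the paper leaves implicit.
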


            Corollary~\ref{cor: Gaussian prepiv confidence sets} implies that inverting hypothesis tests based on Gaussian prepivoting yields a single confidence set with both $1 - \alpha$ coverage for under a constant effect model at all finite $N$ and also at least $1 - \alpha$ asymptotic coverage for the sample average treatment effect.  The extension to generating confidence sets based upon asymptotically linear estimators of the form $\breve{T}$ follows similarly but rests upon Theorem~2 instead of Theorem~1.  Consequently, regression adjustment can be incorporated into the confidence set generating procedure.
\color{black}

\section{Additional simulations}
\subsection{The generative model}\label{sec:gen}
Theorem~\ref{supp: thm: gaussian} and its generalizations concern the finite sample and asymptotic Type I error rates of testing $H_{F}$ and $H_{N}$.   Here we provide additional simulations to highlight the potential for anti-conservative inference in the absence of prepivoting and to investigate the statistical power of the Fisher Randomization Test based upon prepivoted test statistics.

Our simulations proceed similarly to those of Section~9.2 in the main text.  For completeness, we detail the simulation set-up here.  In each iteration $b = 1,...,B$, we draw $\{\mathbf{r}_i(1)\}_{i=1}^N$ and $\{\mathbf{r}_i(0)\}_{i=1}^N$ independent from one another and $iid$ from mean zero equicorrelated multivariate normals of dimension $k=25$ with marginal variances one. The correlation coefficients governing $\mathbf{r}_i(1)$ and $\mathbf{r}_i(0)$ are 0 and 0.95 respectively. For both our type I error and power simulations we will have two simulation settings, one with constant treatment effects and one with heterogeneous treatment effects:
\begin{itemize}
    \item[]\textit{Constant Effects}: $\mathbf{y}_i(1) - \bm{\tau} = \mathbf{y}_i(0) = \mathbf{r}_i(1)$.
    \item[]\textit{Heterogeneous Effects}: $\mathbf{y}_i(1) = \mathbf{r}_i(1)$; $\mathbf{y}_i(0) + \overline{\bm{\tau}} = \mathbf{r}_i(0) + \bar{\mathbf{r}}(1) - \bar{\mathbf{r}}(0)$.
\end{itemize}
The experimental design is that of a completely randomized experiment in which $n_1=0.2N$.  We test for treatment effect using randomization inference based upon the following three statistics:
\begin{enumerate}
    \item Hotelling's $T$-squared, unpooled covariance,
    \item Hotelling's $T$-squared, pooled covariance,
    \item Max absolute $t$-statistic, unpooled standard error.
\end{enumerate}

\subsection{Type I error rates}
When $\bm{\tau} = \mathbf{0}$ in the constant effects simulation setting in \S \ref{sec:gen}, Fisher's sharp null holds; likewise taking $\overline{\bm{\tau}} = \mathbf{0}$ under heterogeneous effects enforces Neyman's weak null.  In these contexts, we reexamine the Type I error rate simulations of the main paper, but at $\alpha = 0.25$ instead of $0.05$.  While this value of $\alpha$ is larger than typical in scientific practice, the larger value of $\alpha$ allows frequent rejections of the null hypothesis despite the inherent conservativeness of inference under the finite population model.  We stress that the conservativeness of these tests rests upon the finite population inference framework and not upon the mechanics of Gaussian prepivoting: the non-identifiability of $\Sigma_{\tau, \infty}$ forces conservativeness of \textit{any} procedure which asymptotically guarantees Type I error rate control under $H_{N}$.  We conduct simulations with $N=300$ and $N=5000$; for each $N$, we conduct $B=5000$ simulations.  These tests are conducted using Monte-carlo simulation to generate the reference distributions, with $1000$ draws from $\Omega_{CRE}$ for each iteration $b$.  Table~\ref{tab: error rates} presents Type I error rates under simulation with $\bm{\tau} = \overline{\bm{\tau}} = \mathbf{0}$. 

\begin{table}
    \caption{\label{tab: error rates} Type I error rates in completely randomized designs with multiple outcomes. The rows describe the simulation settings, which vary between the sharp and weak nulls holding and between small and large sample sizes. There are three sets of columns, one corresponding to each of the three test statistics under consideration. For each set of columns, the column labeled ``FRT'' represents the Fisher Randomization Test using that test statistic. The column labeled ``Pre.'' instead reflects the Fisher Randomization Test after applying Gaussian prepivoting to the original test statistic. The last column, labeled ``LS,'' is a large-sample test which is asymptotically valid for the weak null. The desired Type I error rate in all settings is  $\alpha=0.25$.  For all columns $\bm{\tau} = \overline{\bm{\tau}} = \mathbf{0}$.}
    \centering

        \begin{tabular}{l c c c c c c c c c}
            &\multicolumn{3}{c}{Hotelling, Unpooled} & \multicolumn{3}{c}{Hotelling, Pooled} & \multicolumn{3}{c}{Max $t$-stat}\\
            & FRT & Pre. & LS &  FRT & Pre. & LS & FRT & Pre. & LS\\
            Sharp, $N=300$  &   0.244 & 0.244 & 0.630 & 0.251 & 0.249 & 0.365 & 0.254 & 0.252 & 0.300 \\ 
            Sharp, $N=5000$ &   0.247 & 0.247 & 0.270 & 0.248 & 0.243 & 0.257 & 0.251 & 0.247 & 0.255 \\ 
            Weak, $N=300$   &   0.320 & 0.320 & 0.538 & 0.996 & 0.361 & 0.433 & 0.321 & 0.071 & 0.082 \\ 
            Weak, $N=5000$  &   0.049 & 0.049 & 0.056 & 0.990 & 0.064 & 0.067 & 0.308 & 0.060 & 0.064 \\ 
        \end{tabular}
\end{table}

As observed both in Table~\ref{tab: error rates} above and in Table~2 of the main text, the Fisher Randomization Test using the pooled Hotelling $T$-statistic demonstrates significant anti-conservativeness under $H_{N}$.  Moreover, we see from Table~\ref{tab: error rates} that at $\alpha = 0.25$ the Fisher Randomization Test using the max $t$-statistic is also anti-conservative under $H_{N}$, a problem that persists even when $N = 5000$.  As a demonstration of Theorem~\ref{supp: thm: gaussian}, the Fisher Randomization Tests using the prepivoted versions of $T_{pool}$ and $T_{|max|}$ control the Type I error rate under Neyman's null for large $N$.

\subsection{Power after prepivoting}
Below we provide a theoretical discussion of the power of the Fisher Randomization Test based on $G(\yobs, \bfZ)$ and use simulations to highlight key aspects of its statistical power in practice.

When prepivoting is not necessary because the test statistic being deployed is already sharp dominant and pivotal, its use does not affect the power of the test.  Suppose that the Fisher Randomization Test using the pivotal test statistic $T(\yobs, \bfZ)$ provides exact inferences under $H_{F}$ and asymptotically valid inferences under $H_{N}$.  Examples of such test statistics include the studentized absolute difference in means for $d = 1$ and its multivariate analogue $T_{\chi^{2}}(\yobs, \mathbf{Z})$; see \citet{RandTestsWeakNulls} for further examples.  Let $\mathcal{N}_{f_{\hat{\xi}}, \hat{V}}$ denote the $f_{\hat{\xi}}$-pushforward of the Gaussian measure $\gamma_{\mathbf{0}, \hat{V}}^{(d  +k)}$.  Since $f_{\hat{\xi}}$ takes values in $\R$ the pushforward measure $\mathcal{N}_{f_{\hat{\xi}}, \hat{V}}$ is a distribution on the real line, and so -- in a slight abuse of notation -- we write its corresponding cumulative distribution function evaluated at $t \in \R$ as $\mathcal{N}_{f_{\hat{\xi}}, \hat{V}}(t)$.  For a completely randomized experiment $G(\yobs, \bfZ) = \mathcal{N}_{f_{\hat{\xi}}, \hat{V}}(T(\yobs, \bfZ))$.  
If $\mathcal{N}_{f_{\hat{\xi}}, \hat{V}}$ is pivotal in the sense that its distribution does not depend upon unknown parameters requiring estimation, then $G(\yobs, \bfW)$ is a fixed continuous non-decreasing transformation of $T(\yobs, \bfW)$.  Consequently, for any fixed $\bfZ$ the pair $\left(G(\yobs, \mathbf{w}), T(\yobs, \mathbf{w}) \right)$ has rank correlation 1 when enumerated over $\mathbf{w}\in\Omega$ and so $p$-values derived under $\hatPerm_{T}$ exactly match those under $\hatPerm_{G}$.  In this case, prepivoting has no impact upon the power of the test: a test statistic $T(\yobs, \bfZ)$ with high power under the alternative will yield $G(\yobs, \bfZ)$ with high power as well.  An example of such a case is $T_{\chi^{2}}$.

However, as demonstrated in Section~5 of the main paper, there are cases for which $T(\yobs, \bfZ)$ cannot be used for randomization inference under $H_{N}$ because it is not asymptotically sharp dominant.  Examples of this include $T_{pool}(\yobs, \bfZ)$ and $T_{|max|}(\yobs, \bfZ)$.  Even in these cases, the asymptotic power of the Fisher Randomization Test using $G(\yobs, \bfZ)$ can be computed.  Regardless of pivotality, the test statistic $G(\yobs, \bfZ)$ itself is the complement of a $p$-value for an asymptotically valid test of $H_{N}$.  In other words, $1 - G(\yobs, \bfZ)$ can be used directly as a $p$-value for testing $H_{N}$ with asymptotic control of the Type I error rate (simply reject $H_{N}$ if $1 - G(\yobs, \bfZ) \leq \alpha$.)  The power of this large-sample test must be computed on a case-by-case basis since it is reliant on the structure of the underlying test statistic $T(\yobs, \bfZ)$.  Because the reference distribution employed by Gaussian prepivoting converges to a standard uniform even under the alternative,  the asymptotic power of the randomization test using $G(\yobs, \bfZ)$ converges to the power of this large-sample test of $H_{N}$, with the added benefit that the randomization test is exact for finite $N$ under $H_{F}$.  Randomization inference after Gaussian prepivoting leverages an asymptotically valid test for $H_{N}$ and furnishes exact inference for $H_{F}$ with no sacrifice in asymptotic power against $H_N$.  In other words, for test statistics $T(\yobs, \bfZ)$ satisfying Conditions~\ref{supp: cond: f} and \ref{supp: cond: xi} exactness under $H_{F}$ can be achieved for free, with limiting power remaining equal to that of the large-sample test upon which $G(\yobs, \bfZ)$ is based.

\begin{table}
        \caption{\label{tab: power} Power simulations in completely randomized designs with multiple outcomes. The rows describe the simulation settings, which vary between constant and heterogeneous effects and between small and large sample sizes. There are three sets of columns, one corresponding to each of the three test statistics under consideration. For each set of columns, the column labeled ``FRT'' represents the Fisher Randomization Test using that test statistic. The column labeled ``Pre.'' instead reflects the Fisher Randomization Test after applying Gaussian prepivoting to the original test statistic. The last column, labeled ``LS,'' is a large-sample test which is asymptotically valid for the weak null. The desired Type I error rate in all settings is  $\alpha=0.25$.  For all columns $\bm{\tau} = \overline{\bm{\tau}} = 0.05\mathbf{e}$ where $\mathbf{e}$ is the vector of all ones.}
        \centering
        \begin{tabular}{l c c c c c c c c c}
                    &\multicolumn{3}{c}{Hotelling, Unpooled} & \multicolumn{3}{c}{Hotelling, Pooled} & \multicolumn{3}{c}{Max $t$-stat}\\
            & FRT & Pre. & LS &  FRT & Pre. & LS & FRT & Pre. & LS\\
            Constant, $N=300$           & 0.378 & 0.378 & 0.748 & 0.389 & 0.384 & 0.521 & 0.339 & 0.335 & 0.393\\
            Constant, $N=5000$          & 1 & 1 & 1 & 1 & 1 & 1 & 0.969 & 0.968 &  0.971\\
            Heterogeneous, $N=300$      & 0.360 & 0.360 & 0.574 & 0.995 & 0.391 & 0.452 & 0.458 & 0.130 & 0.149\\
            Heterogeneous, $N=5000$     & 0.421 & 0.421 & 0.448 & 0.995 & 0.080 & 0.086 & 0.993 & 0.861 & 0.868
        \end{tabular}

\end{table}

Table~\ref{tab: power} presents power simulations under the set-up detailed above for $\bm{\tau} = \overline{\bm{\tau}} = 0.05\mathbf{e}$, where $\mathbf{e}$ denotes the vector of all ones.  Under constant effects, the power of all of the tests is high and the Type I error rate is controlled for $H_{F}$ because we are using Fisher Randomization Tests. Although the power of the Fisher Randomization Tests using $T_{pool}$ and $T_{|max|}$ is very high for heterogeneous effects, as observed in Table~\ref{tab: error rates} the randomization tests of $T_{pool}$ and $T_{|max|}$ do not control the Type I error rate for testing $H_{N}$ even asymptotically.  However, for the tests which do asymptotically control the Type I error rate under $H_{N}$ the randomization test of $G(\yobs, \bfZ)$ has power observed to be close to that of the large-sample test.  Furthermore, the gap in power between the two diminishes as $N$ increases. As stated above, this is because the critical value deployed by the randomization test of $G(\yobs, \bfZ)$ is converging to 0.75 as $N$ increases, while the large-sample test rejects for  $G(\yobs, \bfZ) \geq 0.75$. This further highlights the asymptotic equivalence between the two approaches.

The results for the prepivoted test based upon the usual (pooled) Hotelling test yield two interesting observations. First, the pooled test has markedly worse power than the unpooled or max-$t$ statistics, as the use of the pooled covariance matrix in forming the test statistics amounts to a choice of a suboptimal norm for constructing the test. Second, the power actually decreases for both the prepivoted randomization test and the large-sample test when going from $N=300$ to $N=5000$.  This can be attributed to the large-sample approximation being quite poor for the pooled test at $N=300$ in the setting under consideration. As shown in the simulations in the manuscript and in Table \ref{tab: error rates}, the Type I error rate exceeds the nominal level when the weak null is true at $N=300$, but falls below it at $N=5000$. As $N$ increases the large-sample approximation becomes better, hence restoring the conservativeness of the test under the null. This behavior also drives the apparent reduction in power in the above table. The power still tends to 1 for the pooled Hotelling test as $N\rightarrow\infty$ in the generative model yielding this simulation study.

\section{Gaussian integral formulation}
    In the main text we used the notation $\gamma^{(\ell)}_{\bm{\mu}, \Sigma}(\mathscr{B})$ to denote the  measure of a Borel-measurable set $\mathscr{B} \subseteq \R^{\ell}$ under Gaussian measure with mean $\bm{\mu}$ and covariance $\Sigma$.  Here we provide equivalent formulations of the example Gaussian prepivoted test statistics examined in Section~5 of the main text, but instead of using $\gamma^{(\ell)}_{\bm{\mu}, \Sigma}$ we directly write the corresponding Gaussian integrals.

    \begin{example}[Absolute difference in means]\label{supp example: difference in means for cre}
    		Let $\sqrt{N}\hat{{\tau}}$ be univariate, consider a completely randomized design with no rerandomization, and let $T_{DiM}(\yobs, \mathbf{Z}) = \sqrt{N}|\tauhat|$, such that $f_{\eta}(t) = |t|$ and $\hat{\xi} = 1$.  Gaussian prepivoting yields the test statistic
    		\begin{align*}
    		    G_{DiM}(\mathbf{y}(\mathbf{Z}), \mathbf{Z}) &= \gamma^{(1)}_{0, \hat{V}_{\tau\tau}}\{a: |a| \leq \sqrt{N}|\hat{{\tau}}|\} \\
    		    &= \frac{1}{\sqrt{2 \pi \hat{V}_{\tau\tau}}}\int_{-\sqrt{N}|\hat{{\tau}}|}^{\sqrt{N}|\hat{{\tau}}|}\exp\left( \frac{-a^{2}}{2 \hat{V}_{\tau\tau}}\right)\,da\\
    		    &=1 - 2\Phi\left(-\frac{\sqrt{N}|\hat{{\tau}}|}{\sqrt{\hat{V}_{\tau\tau}}}\right),
    		\end{align*}
    		where $\Phi(\cdot)$ is the standard normal distribution function.
    \end{example}

    \begin{example}[Multivariate studentization] \label{supp example: mult}
        Let $\sqrt{N}\hat{\bm{\tau}}$ now be multivariate and suppose we have a completely randomized design; consider the test statistic
    	\begin{align}\label{supp eqn: studentized test statistic}
    		T_{\chi^{2}}(\yobs, \mathbf{Z}) &= \left(\sqrt{N} \hat{\bm{\tau}}\right)^{\T} \hat{V}_{\tau\tau}^{-1} \left(\sqrt{N} \hat{\bm{\tau}}\right);\\
    		\hat{V}_{\tau\tau} &= \frac{N}{n_{1}}\hat{\Sigma}_{y(1)} + \frac{N}{n_{0}}\hat{\Sigma}_{y(0)}. \nonumber
    	\end{align}
    	For this test statistic, $f_\eta(\mathbf{t}) = \mathbf{t}^\T\eta^{-1}\mathbf{t}$ and $\hat{\xi} = \hat{V}_{\tau\tau}$.  Gaussian prepivoting produces
    	\begin{align}
    	    G_{\chi^2}(\mathbf{y}(\mathbf{Z}), \mathbf{Z}) &= \gamma^{(d)}_{\bm{0},\hat{V}_{\tau\tau}}\{\mathbf{a}: \mathbf{a}^\T\hat{V}_{\tau\tau}^{-1}\mathbf{a} \leq T_{\chi^2}(\yobs, \mathbf{Z})\}\nonumber \\
    	    &= \frac{1}{\sqrt{(2 \pi)^{d}\det(\hat{V}_{\tau\tau})}}\int_{\R^{d}}\indicatorFunction{\mathbf{a}^\T\hat{V}_{\tau\tau}^{-1}\mathbf{a} \leq T_{\chi^2}(\yobs, \mathbf{Z})}\exp\left(\frac{-\mathbf{a}^{\T}\hat{V}_{\tau\tau}^{-1}\mathbf{a}}{2} \right)\,d\mathbf{a}\label{eqn: gaussian integral ellipsoid}\\
    	    &= F_d\{T_{\chi^2}(\yobs, \mathbf{Z})\},\nonumber
    	\end{align}
    	where $F_d(\cdot)$ is the distribution function of a $\chi^2_d$ random variable.
    \end{example}

    \begin{example}[Max absolute $t$-statistic]
        Consider again multivariate $\sqrt{N}\hat{\bm{\tau}}$ in a completely randomized design and the test statistic
        \begin{align*}
            T_{|max|}(\mathbf{y}(\mathbf{Z}), \mathbf{Z}) = \max_{1\leq j \leq d}\frac{\sqrt{N}|\hat{{\tau}}_j|}{\sqrt{\hat{V}_{\tau\tau, jj}}},
        \end{align*}
        where $\hat{V}_{\tau\tau, jj}$ is the $jj$ element of $\hat{V}_{\tau\tau}$. For this statistic, $f_{\bm{\eta}}(\mathbf{t}) = \max_{1\leq j\leq d} |t_j|/\eta_j$, and $\hat{\bm{\xi}} = (\hat{V}^{1/2}_{\tau\tau, 11},...,\hat{V}^{1/2}_{\tau\tau, dd})^\T$. After Gaussian prepivoting we are left with
        \begin{align}
            G_{|max|}(\mathbf{y}(\mathbf{Z}), \mathbf{Z}) &=
            \gamma^{(d)}_{\bm{0},\hat{V}_{\tau\tau}}\left\{\mathbf{a}: \max_{1\leq j \leq d}\; \frac{|a_j|}{\sqrt{\hat{V}_{\tau \tau, jj}}} \leq  \max_{1\leq j \leq d}\; \frac{\sqrt{N}|\hat{{\tau}}_j|}{\sqrt{\hat{V}_{\tau\tau, jj}}}\right\}\nonumber\\
    	    &= \frac{1}{\sqrt{(2 \pi)^{d}\det(\hat{V}_{\tau\tau})}}\int_{\R^{d}}\indicatorFunction{\max_{1\leq j \leq d}\; \frac{|a_j|}{\sqrt{\hat{V}_{\tau \tau, jj}}}  \leq T_{|max|}(\mathbf{y}(\mathbf{Z}), \mathbf{Z})}\exp\left(\frac{-\mathbf{a}^{\T}\hat{V}_{\tau\tau}^{-1}\mathbf{a}}{2} \right)\,d\mathbf{a}.\label{eqn: gaussian integral hyperrectangle}
        \end{align}
        Importantly, \eqref{eqn: gaussian integral ellipsoid} and \eqref{eqn: gaussian integral hyperrectangle} differ only in the support of the Gaussian integral.  The same Gaussian measure is used; the difference is that the support of \eqref{eqn: gaussian integral ellipsoid} is an ellipsoid while the support of \eqref{eqn: gaussian integral hyperrectangle} is a hyperrectangle.
    \end{example}

    \begin{example}[Rerandomization] \label{supp example: rerandomization}
        Let $\sqrt{N}\hat{{\tau}}$ be univariate and suppose we now consider a rerandomized design with balance criterion $\phi$ satisfying Condition~1. Consider the absolute  difference in means, $f_{\hat{\xi}}(\sqrt{N}\hat{{\tau}}) = \sqrt{N}|\hat{{\tau}}|$, such that $\hat{\xi} = 1$. Gaussian prepivoting yields the test statistic
        \begin{align}
            G_{Re}(\mathbf{y}(\mathbf{Z}), \mathbf{Z}) &= \frac{\gamma^{(1+k)}_{\bm{0}, \hat{V}}\left\{(\mathbf{a}, \mathbf{b})^\T: |a| \leq \sqrt{N}|\hat{{\tau}}|\; \wedge\; \phi(\mathbf{b})=1\right \}}{\gamma^{(k)}_{\bm{0}, \hat{V}_{\delta \delta}}\left\{\mathbf{b}: \phi(\mathbf{b})=1\right\}}\nonumber\\
            &= \frac{\frac{1}{\sqrt{(2 \pi)^{(k + 1)}\det(\hat{V})}}\int_{\R^{k}}\left( \phi(\mathbf{b}) \int_{-\sqrt{N}|\hat{{\tau}}|}^{\sqrt{N}|\hat{{\tau}}|}\exp\left( \frac{-[a\, \mathbf{b}^{\T}]\hat{V}^{-1}[a\, \mathbf{b}^{\T}]^{\T}}{2 }\right)\,da\right)\,d\mathbf{b}}{\frac{1}{\sqrt{(2 \pi)^{k}\det(\hat{V}_{\delta\delta})}}\int_{\R^{k}}\phi(\mathbf{b}) \exp\left( \frac{-\mathbf{b}^{\T}\hat{V}^{-1}_{\delta \delta}\mathbf{b}}{2 }\right)\,d\mathbf{b}}.\label{eqn: gaussian integral for rerand}
        \end{align}
        Since $\phi(\cdot)$ is a boolean-valued function it directly constrains the support of the Gaussian integrals in \eqref{eqn: gaussian integral for rerand}.
    \end{example}

\section{Discussing Condition~2}
Recall Condition~2 from the main text:
\setcounter{condition}{1}
\begin{condition}\label{supp cond: f}
    For any $\eta \in \Xi$, $f_{\eta}(\cdot): \mathbb{R}^d\mapsto \mathbb{R}_+$ is continuous, quasi-convex, and nonnegative with $f_\eta(\mathbf{t}) = f_\eta(-\mathbf{t})$ for all $\mathbf{t}\in \mathbb{R}^d$. Furthermore, $f_{\eta}(\mathbf{t})$ is jointly continuous in $\eta$ and $\mathbf{t}$.
\end{condition}
Each condition on $f_{\eta}$ plays an important role in the underlying mechanics of Gaussian prepivoting, and each deserves some degree of attention.  First, the joint continuity of $f_{\eta}(\mathbf{t})$ in $\eta$ and $\mathbf{t}$ plays a critical role in the asymptotic behavior of the test statistic $T$.  When computing the asymptotic distributional behavior of $T(\yobs, \bfZ)$ we leverage the central limit theorem governing $\sqrt{N}\hat{\bm{\tau}}(\yobs, \bfZ)$ and the continuous mapping theorem to obtain the distributional limit of
$$
    f_{\hat{\xi}(\yobs, \bfZ)}\left(\sqrt{N}\hat{\bm{\tau}}(\yobs, \bfZ)\right).
$$
Without the joint continuity of $f_{\eta}(\mathbf{t})$, such a generic asymptotic result would not be feasible.  The same reasoning shows the utility of Condition~\ref{supp cond: f}'s joint continuity requirement when analyzing
$$
    f_{\hat{\xi}(\yobs, \bfW)}\left(\sqrt{N}\hat{\bm{\tau}}(\yobs, \bfW)\right).
$$
In this sense, the joint continuity assumption is of technical importance for deriving asymptotic distributional behavior.  The quasi-convexity and mirror symmetry assumptions are of a more fundamental nature to our results; they are inextricably linked to \citeauthor{conservativeCovarianceDominance}'s (\citeyear{conservativeCovarianceDominance}) theorem for multivariate Gaussians and so they play a crucial role in guaranteeing the asymptotic sharp dominance of $G(\yobs, \bfZ)$.  
A quasi-convex function is a function with convex sublevel sets; for those unfamiliar with quasi-convex functions, we suggest the excellent review of \citet[Chapter 3]{generalizedConcavity}.  A simple example function from $\R^{d} \rightarrow \R$ that is both quasi-convex and mirror symmetric about the origin is the Euclidean norm $\mathbf{t} \mapsto ||\mathbf{t}||$.  Generalizing slightly more, if $f_{\eta}(\mathbf{t})$ is any seminorm on $\R^{d}$ which is jointly continuous in $\eta$ and $\mathbf{t}$, then $f_{\eta}(\cdot)$ satisfies Condition~\ref{supp cond: f}.  In fact, this is nearly a complete characterization; we will show that the criteria of Condition~\ref{supp cond: f} stipulate that $f_{\eta}(\cdot)$ is tightly related to a seminorm; though $f_{\eta}(\cdot)$ need not be a seminorm itself.  Our discussion centers around the case of a completely randomized experiment, but this restriction is only for the sake of explication; similar reasoning applies in the rerandomized case as well.

Consider a convex set $\mathcal{U} \subset \R^{d}$
; suppose that $\mathcal{U}$ is \textit{balanced} in the sense that $c\mathcal{U} \subseteq \mathcal{U}$ for all scalars $u \in [-1, 1]$.  Such a set $\mathcal{U}$ is necessarily mirror-symmetric about the origin, and so Anderson's theorem states that:
\begin{quote}
    If $\mathcal{X} \sim \Normal{\mathbf{0}}{S_{\mathcal{X}}}$ and $\mathcal{Y} \sim \Normal{\mathbf{0}}{S_{\mathcal{Y}}}$ are non-degenerate with $S_{\mathcal{Y}} - S_{\mathcal{X}} \succeq 0$, then $\Prob{\mathcal{X} \in \mathcal{U}} \geq \Prob{\mathcal{Y} \in \mathcal{U}}$.
\end{quote}
Moreover, such a set $\mathcal{U}$ defines a seminorm on $\R^{d}$ via its Minkowski functional $\rho_{\mathcal{U}}(\mathbf{t}) = \inf_{k > 0}\{\mathbf{t} \in k\mathcal{U}\}$.
In light of this, Anderson's theorem can be rewritten as:
\begin{quote}
    If $\mathcal{X} \sim \Normal{\mathbf{0}}{S_{\mathcal{X}}}$ and $\mathcal{Y} \sim \Normal{\mathbf{0}}{S_{\mathcal{Y}}}$ are non-degenerate with $S_{\mathcal{Y}} - S_{\mathcal{X}} \succeq 0$, then $\Prob{\rho_{\mathcal{U}}(\mathcal{X}) \leq 1} \geq \Prob{\rho_{\mathcal{U}}(\mathcal{Y}) \leq 1}$.
\end{quote}

Denote the preimage of a set $S$ under $f_{\eta}$ by $f_{\eta}^{-1}(S)$. By quasi-convexity and symmetry of $f_{\eta}$, the set $f_{\hat{\xi}}^{-1}\left([-\infty, T] \right)$ is convex and symmetric about the origin for any $T \in \R$.  Specifically, taking $\mathcal{U} = f_{\hat{\xi}}^{-1}\left([-\infty, T(\yobs, \bfZ)] \right)$ yields a random seminorm $\rho_{\mathcal{U}}$.  Finally, taking $S_{\mathcal{Y}} = \bar{\bar{V}}$ and $S_{\mathcal{X}} = V$ gives exactly that the randomization distribution of the Gaussian prepivoted test statistic $G(\yobs, \bfZ)$ is asymptotically dominated by the uniform distribution.

In other words, Anderson's theorem can be rephrased to say that when $S_{\mathcal{Y}} - S_{\mathcal{X}} \succeq 0$ the random variable $\mathcal{X}$ is more concentrated in any semi-norm than $\mathcal{Y}$; Condition~\ref{supp cond: f} is designed exactly so that the random set $\mathcal{U} = f_{\hat{\xi}}^{-1}\left([-\infty, T(\yobs, \bfZ)] \right)$ generates a seminorm via its Minkowski functional.

\section{Details of examples}
In the main text, we provide several examples of test statistics which are amenable to Gaussian prepivoting.  Here we provide details to verify the conditions of Theorem~1 for these examples.

Define the standard Neyman covariance estimator
\begin{equation*}
    \hat{V}(\bfy(\bfz), \bfw) = \begin{bmatrix}
        \hat{V}_{\tau\tau}(\bfy(\bfz), \bfw) & \hat{V}_{\tau\delta}(\bfy(\bfz), \bfw)\\
        \hat{V}_{\tau\delta}(\bfy(\bfz), \bfw)^{\T} & \hat{V}_{\delta\delta}(\bfy(\bfz), \bfw)
    \end{bmatrix}
\end{equation*}
where
\begin{align*}
    \hat{V}_{\tau\tau}(\bfy(\bfz), \bfw) &= N\left(\frac{\hat{\Sigma}_{y(1)}(\bfy(\bfz), \bfw)}{n_1} + \frac{\hat{\Sigma}_{y(0)}(\bfy(\bfz), \bfw)}{n_0}\right),\\
   \hat{\Sigma}_{y(1)}(\bfy(\bfz), \bfw) &= \frac{1}{n_{1} - 1}\sum_{i \,:\, w_{i} = 1}\left(\bfy_{i}(z_{i}) - \frac{1}{n_{1}}\sum_{j \,:\, w_{j} = 1}\bfy_{j}(z_{j}) \right)\left(\bfy_{i}(z_{i}) - \frac{1}{n_{1}}\sum_{j \,:\, w_{j} = 1}\bfy_{j}(z_{j}) \right)^{\T},\\
   \hat{\Sigma}_{y(0)}(\bfy(\bfz), \bfw) &= \frac{1}{n_{0} - 1}\sum_{i \,:\, w_{i} = 0}\left(\bfy_{i}(z_{i}) - \frac{1}{n_{0}}\sum_{j \,:\, w_{j} = 0}\bfy_{j}(z_{j}) \right)\left(\bfy_{i}(z_{i}) - \frac{1}{n_{0}}\sum_{j \,:\, w_{j} = 0}\bfy_{j}(z_{j}) \right)^{\T}.
\end{align*}
and the other blocks are defined analogously.
\begin{lemma}\label{lem: Neyman covariance estimator satisfies conditions}
    The Neyman covariance estimator satisfies Condition~4 of the main text.
\end{lemma}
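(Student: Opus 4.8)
The plan is to verify the two probability limits in Condition~4 block by block, leaning on finite-population laws of large numbers for sample means and sample second moments of simple random samples. First I would write $\hat V(\bfy(\bfz), \bfw)$ in terms of its blocks $\hat V_{\tau\tau}$, $\hat V_{\tau\delta}$, $\hat V_{\delta\delta}$, each of the form $N(\,\cdot^{(1)}/n_1 + \cdot^{(0)}/n_0\,)$ where the superscripts denote sample covariance matrices computed on the ``treated'' and ``control'' subgroups determined by the second argument $\bfw$. Throughout I would use $N/n_1 \to p^{-1}$, $N/n_0 \to (1-p)^{-1}$ (Assumption~1) together with the finite-population strong law for first and second moments \citep[Lemma A.3]{RandTestsWeakNulls} (see also \citet[Lemmas A.1, A.5]{agnosticRegAdj}), which under Assumption~3 guarantees that the sample mean and sample covariance matrix of a simple random subsample converge to the corresponding finite-population quantities. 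For rerandomized designs the same probability limits persist by Lemmas A15--A16 of \citet{asymptoticsOfRerand}, so it suffices to treat the completely randomized case.

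For the ``$\bfZ$'' convergence, the relevant subgroups are the treated and control units, which are simple random samples of sizes $n_1$ and $n_0$. Since subtracting the within-subgroup constant $Z_i\bar{\bm{\tau}}$ does not change a sample covariance, $\hat V(\tilde{\mathbf{y}}(\bfZ), \bfZ) = \hat V(\yobs, \bfZ)$, and the strong law gives $\hat\Sigma_{y(1)}\overset{p}{\rightarrow}\Sigma_{y(1),\infty}$, $\hat\Sigma_{y(0)}\overset{p}{\rightarrow}\Sigma_{y(0),\infty}$, and the analogous statements for the $yx$- and $xx$-blocks. Hence $\hat V_{\tau\tau}(\yobs,\bfZ)\overset{p}{\rightarrow} p^{-1}\Sigma_{y(1),\infty} + (1-p)^{-1}\Sigma_{y(0),\infty}$, $\hat V_{\tau\delta}(\yobs,\bfZ)\overset{p}{\rightarrow} p^{-1}\Sigma_{y(1)x,\infty} + (1-p)^{-1}\Sigma_{y(0)x,\infty}$, and $\hat V_{\delta\delta}(\yobs,\bfZ)\overset{p}{\rightarrow}\{p(1-p)\}^{-1}\Sigma_{x,\infty}$. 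Comparing with the displayed formula for $V$, the $\tau\delta$- and $\delta\delta$-blocks are estimated consistently, while the $\tau\tau$-block overshoots by exactly $\Sigma_{\tau,\infty}$; setting $\Delta = \Sigma_{\tau,\infty}\succeq 0$ gives the first limit.

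For the ``$\bfW$'' convergence I would fix a realization of $\bfZ$ in the probability-one set on which the strong law applies to the imputed outcomes $\tilde y_i(Z_i)$; after conditioning these are deterministic numbers, $n_1$ of them equal to $y_i(1)-\bar{\bm{\tau}}$ and $n_0$ equal to $y_i(0)$, sharing the common empirical mean $\bar{\mathbf{y}}(0)$ --- this is precisely what the $\tilde{\mathbf{y}}$ centering buys, and it is what forces all cross terms in the variance decomposition to vanish. Viewing $\bfW$ as a fresh uniform allocation over $\Omega_{CRE}$ for this imputed population, each of the $W_i=1$ and $W_i=0$ subgroups is a simple random sample, so conditionally on $\bfZ$ the subgroup sample covariances of the imputed outcomes converge to the full imputed-population covariance $\Sigma_{\mathrm{obs}}(\bfZ)=\tfrac{1}{N-1}\sum_i(\tilde y_i(Z_i)-\bar{\mathbf{y}}(0))(\tilde y_i(Z_i)-\bar{\mathbf{y}}(0))^\T$, and likewise for the cross- and covariate-blocks. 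Decomposing $\Sigma_{\mathrm{obs}}(\bfZ)$ over the two subpopulations and invoking the strong law over $\bfZ$ (again using the centering to kill cross terms) gives $\Sigma_{\mathrm{obs}}(\bfZ)\to p\Sigma_{y(1),\infty}+(1-p)\Sigma_{y(0),\infty}$ almost surely, and the observed-outcome/covariate covariance limits to $p\Sigma_{y(1)x,\infty}+(1-p)\Sigma_{y(0)x,\infty}$; multiplying through by $N/n_1+N/n_0\to\{p(1-p)\}^{-1}$ and matching with the displayed formula for $\tilde V$ yields $\hat V(\tilde{\mathbf{y}}(\bfZ), \bfW)\overset{p}{\rightarrow}\tilde V$ conditionally on $\bfZ$, almost surely in $\bfZ$. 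Lemma~\ref{supp: lem: condition convergence in prob to joint convergence in prob} then upgrades this to convergence in probability in the joint randomness of $(\bfZ,\bfW)$. Much of this step duplicates material already inside the proof of Proposition~1 (the proof of Theorem~1 of \citet{RandTestsWeakNulls}), so the write-up can largely cite that source.

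The hard part will be the ``$\bfW$'' statement: one must handle the nested randomness cleanly, which the conditioning-plus-Lemma~\ref{supp: lem: condition convergence in prob to joint convergence in prob} device accomplishes, and then carry out the decomposition of the mixed (imputed) population's covariance into a treated-block, a control-block, and cross terms, verifying that the cross terms are $o_p(1)$. This is exactly where centering by $\bar{\bm{\tau}}$ in $\tilde{\mathbf{y}}$ is essential, and it is the reason the limit comes out to $\tilde V$ rather than to something contaminated by the gap $\bar{\mathbf{y}}_{\infty}(1)-\bar{\mathbf{y}}_{\infty}(0)$.
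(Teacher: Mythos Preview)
Your proposal is correct and follows the same route as the paper's proof, which is a brief two-paragraph affair citing \citet{decompTreatmentEffectVar} and \citet[Appendix Lemma 1]{lin13} for the $\bfZ$-limit and \citet[Appendix A2]{RandTestsWeakNulls} for the $\bfW$-limit; your block-by-block verification and conditioning-plus-Lemma~\ref{supp: lem: condition convergence in prob to joint convergence in prob} device is precisely the content behind those citations. One small wording fix: the treated and control subgroups of the imputed population do not literally share the empirical mean $\bar{\mathbf{y}}(0)$ at finite $N$---rather, both subgroup means converge to $\bar{\mathbf{y}}_\infty(0)$, which is what makes the cross terms $o_p(1)$.
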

\begin{proof}
    The limiting conservativeness of $\hat{V}(\bfy(\bfZ), \bfZ)$ rests upon the conservativeness of $\hat{V}_{\tau\tau}(\bfy(\bfZ), \bfZ)$, a well known fact dating back to \citeauthor{NeymanVariance} himself in the scalar case.  The vector version of this result is noted in \citet[Section 2.2]{decompTreatmentEffectVar} and relies upon the consistency of the sample covariance estimators $\hat{\Sigma}_{y(0)}(\bfy(\bfZ), \bfZ)$ and $\hat{\Sigma}_{y(1)}(\bfy(\bfZ), \bfZ)$.  The consistency of $\hat{\Sigma}_{y(0)}(\bfy(\bfZ), \bfZ)$ and $\hat{\Sigma}_{y(1)}(\bfy(\bfZ), \bfZ)$ (and their related quantities for the other blocks) is a consequence of Assumptions 1-3 and \citet[Appendix Lemma 1]{lin13}.

    Verifying the second part of Condition~4 requires examining the limiting behavior of $\hat{V}(\bfy(\bfZ), \bfW)$.  Such an analysis can be found in \citet[Appendix A2]{RandTestsWeakNulls}; while their work focuses on the scalar-outcome many-treatment case, the techniques convert straightforwardly to the vector-outcome treated-versus-control case.
\end{proof}

Lemma~\ref{lem: Neyman covariance estimator satisfies conditions} establishes that the covariance estimators used for prepivoting in Section~5 of the main text are indeed in accordance with Condition~4.  Next, we examine each example provided in the main text to establish that Conditions~2 and 3 are met.

\begin{example}[Absolute Difference in Means]
    In a completely randomized experiment, we consider $T_{DiM}(\yobs, \mathbf{Z}) = \sqrt{N}|\tauhat|$, with $f_{\eta}(t) = |t|$ and $\hat{\xi} = 1$.  Condition~3 is trivially satisfied since $\hat{\xi}$ is not stochastic.  Condition~2 follows from the continuity, convexity, non-negativity, and symmetry of the absolute value function.
\end{example}

\begin{example}[Multivariate studentization]
    Let $\sqrt{N}\hat{\bm{\tau}}$ now be multivariate and suppose we have a completely randomized design. We examine the statistic
	\begin{align}
		T_{\chi^{2}}(\yobs, \mathbf{Z}) &= \left(\sqrt{N} \hat{\bm{\tau}}\right)^{\T} \hat{V}_{\tau\tau}^{-1} \left(\sqrt{N} \hat{\bm{\tau}}\right),
	\end{align}
	with $\hat{V}_{\tau\tau} = \frac{N}{n_{1}}\hat{\Sigma}_{y(1)} + \frac{N}{n_{0}}\hat{\Sigma}_{y(0)}$.  For this test statistic, $f_\eta(\mathbf{t}) = \mathbf{t}^\T\eta^{-1}\mathbf{t}$ and $\hat{\xi} = \hat{V}_{\tau\tau}$.  Since $\hat{\xi}$ matches the top-left block of the Neyman covariance estimator Lemma~\ref{lem: Neyman covariance estimator satisfies conditions} shows that Condition~3 is met.  Condition~2 holds because the quadratic form $f_\eta(\mathbf{t}) = \mathbf{t}^\T\eta^{-1}\mathbf{t}$ is certainly mirror symmetric, jointly continuous and convex by standard results for quadratic forms, and is non-negative since $\eta$ is positive definite, and so its inverse must be as well.

	The analysis for $T_{pool}$ follows similar logic, but with the added observation that
    \begin{align*}
        \hat{V}_{Pool}(\yobs, \bfZ) &\convP[] \frac{\Sigma_{y(0),\infty}}{p} + \frac{\Sigma_{y(1),\infty}}{1 - p}\\
        \hat{V}_{Pool}(\yobs, \bfW) &\convP[] \frac{\Sigma_{y(0),\infty}}{p} + \frac{\Sigma_{y(1),\infty}}{1 - p}.
    \end{align*}
    This follows because, under our assumptions, $\hat{\Sigma}_{y(1)}(\yobs, \bfZ) \convP[] \Sigma_{y(1), \infty}$ and  $\hat{\Sigma}_{y(0)}(\yobs, \bfZ) \convP[] \Sigma_{y(0), \infty}$ while both $\hat{\Sigma}_{y(1)}(\yobs, \bfW)$ and $\hat{\Sigma}_{y(0)}(\yobs, \bfW)$ converge in probability to $p\Sigma_{y(1), \infty} + (1-p)\Sigma_{y(0), \infty}$.
\end{example}

\begin{example}[Max absolute $t$-statistic]
    Consider again multivariate $\sqrt{N}\hat{\bm{\tau}}$ and a completely randomized design.  The max-absolute $t$-statistic is
    \begin{align*}
        T_{|max|}(\mathbf{y}(\mathbf{Z}), \mathbf{Z}) = \max_{1\leq j \leq d}\frac{\sqrt{N}|\hat{{\tau}}_j|}{\sqrt{\hat{V}_{\tau\tau, jj}}},
    \end{align*}
    where $\hat{V}_{\tau\tau, jj}$ is the $jj$ element of $\hat{V}_{\tau\tau}$. For this statistic, $f_{\bm{\eta}}(\mathbf{t}) = \max_{1\leq j\leq d} |t_j|/\eta_j$, and $\hat{\bm{\xi}} = (\hat{V}^{1/2}_{\tau\tau, 11},...,\hat{V}^{1/2}_{\tau\tau, dd})^\T$.  Since $\hat{\bm{\xi}}$ is the square-root of the diagonal elements of $\hat{V}_{\tau\tau}$, Lemma~\ref{lem: Neyman covariance estimator satisfies conditions} again establishes Condition~3.  Certainly each coordinate projection $|t_j|/\eta_j$ is jointly continuous in $\eta_{j} > 0$ and $t_{j}$.  Taking the maximum over these functions preserves continuity.  The maximum of linear functions is convex so $f_{\bm{\eta}}(\mathbf{t})$ is quasi-convex.  Non-negativity and mirror symmetry are trivial algebraic properties inherited from the coordinate-wise absolute value function.  Thus, Condition~2 holds.
\end{example}

\begin{example}[Rerandomization]
    Consider a rerandomized design with balance criterion $\phi$ satisfying Condition~1 and let $\sqrt{N}\hat{{\tau}}$ be univariate. Consider the absolute  difference in means, $f_{\hat{\xi}}(\sqrt{N}\hat{{\tau}}) = \sqrt{N}|\hat{{\tau}}|$, such that $\hat{\xi} = 1$.  As before, since $\hat{\xi}$ is non-stochastic Condition~3 is immediate.  The continuity, quasi-convexity, non-negativity, and symmetry of $f_{\eta}$ are immediate consequences of the properties of the absolute value function.  Thus, Condition~2 holds.
\end{example}

\section{A case study with educational data}\label{supp: sec: ALO}
We demonstrate inference using Gaussian prepivoting in a completely randomized experiment.  \citet{ALO} implemented a moderate-scale completely randomized experiment to test the effectiveness of several strategies intended to boost academic performance.  Their experiment, the so-called \textit{Student Achievement and Retention} (STAR) project, enrolled incoming first-year undergraduate students -- except those with high-school grade point average (GPA) in the top 25\% -- in one of three treatment arms: a student support program, a financial incentive program, or both.  Allocation to the programs was performed completely at random.  Numerous demographic features of program participants were collected; we focus specifically on the participants' reported genders and high-school GPAs.  The primary outcomes of the study were first-year GPA and second-year GPA.  Further details on the nature of the interventions and the specific demographic features collected can be found in \citet{ALO}.  The data collected in the STAR project is publicly available in the online supplement to \citet{ALO}.

\citet{ALO} found no evidence to suggest that the program was effective at improving educational outcomes among participants who identified as men.  \citeauthor{lin13} used regression-adjusted estimators to examine inference for the marginal effect of offering financial incentives given that support services were offered; his analysis focuses on only the male participants of the study.  \citeauthor{lin13} performs several simulations under the assumption that Fisher's sharp null holds, but he remarks that
\begin{quote}
    Chung and Romano (2011a, 2011b) discuss and extend a literature on permutation tests that do remain valid asymptotically when the null hypothesis is weakened. One such test is based on the permutation distribution of a heteroskedasticity-robust $t$-statistic. Exploration of this approach under the Neyman model (with and without covariate adjustment) would be valuable.
\end{quote}
Gaussian prepivoting allows us to meet and exceed this objective: a permutation-testing framework can be applied with asymptotic validity under $H_{N}$ for a wide class of statistics -- including multivariate statistics for which studentization alone is insufficient to restore asymptotic conservativeness. Theorem~\ref{supp: thm: gaussian} guarantees finite sample exactness under $H_{F}$ and asymptotic conservativeness under $H_{N}$ of the prepivoted test statistic $G(\yobs, \bfZ)$ subject to mild conditions; moreover, Section~\ref{sec: reg adj} of the supplement extends this result to the context of regression-adjusted estimators.

We re-analyze the data studied by \citet{ALO} through the lens of Gaussian prepivoting.  Instead of restricting to the univariate outcome of first-year GPA, we examine the effect of treatment on both first-year and second-year GPA.  We implement prepivoting using the test statistics of Section~5 of the main text:
\begin{itemize}
    \item the Euclidean 2-norm of the difference in means, denoted $T_{||\cdot||_{2}}$,
    \item the multivariate studentized statistics $T_{\chi^{2}}$ and $T_{Pool}$,
    \item the maximum absolute $t$-statistic $T_{|max|}$.
\end{itemize}

Furthermore, we implement prepivoting in the cases above using the regression adjusted estimator of the difference in means -- regressing on high-school GPA -- instead of the na\"{i}ve difference in means.  In total $N = 141$ male-identifying participants have complete covariate and outcome data (high-school GPA, first and second year GPAs, respectively) and were offered \textit{at least} support services.  Of these individuals, $n_{1} = 55$ were offered both support services and financial incentives while $n_{0} = 86$ received only the offer for support services.

\begin{table}
        \caption{\label{tab: ALO data pvalues} $p$-values of the Fisher Randomization Test with and without using Gaussian prepivoting. The left two numerical columns use the Fisher Randomization Test directly on the base statistic without prepivoting.  The right two numerical columns apply Gaussian prepivoting to the base statistic before using the Fisher Randomization Test.  In ``With Adjustment" columns linear regression adjustment using high-school GPA was applied to estimate the difference in means; ``Without Adjustment" columns perform no regression adjustment.} 
        \centering
    \begin{tabular}{ccccc}
               & \multicolumn{2}{c}{No Prepivoting}       & \multicolumn{2}{c}{Prepivoting}          \\
        Base Statistic & Without Adjustment & With Adjustment & Without Adjustment & With Adjustment \\
       $T_{||\cdot||_{2}}$  & 0.140 & 0.095 & 0.154 & 0.104   \\
       $T_{\chi^{2}}$       & 0.159 & 0.126 & 0.159 & 0.126   \\
       $T_{Pool} $          & 0.141 & 0.107 & 0.153 & 0.121   \\
       $T_{|max|}$          & 0.181 & 0.129 & 0.174 & 0.122   \\
   \end{tabular}
\end{table}

Table~\ref{tab: ALO data pvalues} contains $p$-values of the Fisher Randomization Test before and after prepivoting.  The $p$-values obtained after prepivoting provide exact inference for $H_{F}$; moreover, the asymptotic results of Theorems~1 and 2 of the main text suggest that these $p$-values are likely to provide conservative inference for $H_{N}$. We stress that without Gaussian prepivoting only for $T_{\chi^{2}}$ would the Fisher Randomization Test be appropriate for Neymanian inference. The other three base statistics of Table~\ref{tab: ALO data pvalues} can exhibit asymptotically anti-conservative inference with the Fisher Randomization Test under $H_{N}$.  For both $T_{||\cdot||_{2}}$ and $T_{Pool} $ -- with and without regression adjustment -- the $p$-value obtained after prepivoting is no less than than the $p$-value derived without first prepivoting. These increased $p$-values suggest that the non-prepivoted procedures for $T_{||\cdot||_{2}}$ and $T_{Pool} $ may have been anti-conservative.  In fact, with $\alpha = 0.1$ an experimenter erroneously using the Fisher Randomization Test based upon regression adjustment with $T_{||\cdot||_{2}}$ to test $H_{N}$ would have rejected the null of no average effect.  Once prepivoting is applied the practitioner is asymptotically entitled to test $H_{N}$ with the Fisher Randomization Test and we observe that the procedure no longer rejects $H_{N}$, thereby rectifying the potentially anti-conservative nature of the preceding result.

The code to implement our analysis is provided online to facilitate reproducibility.

\color{black}

\section{Software}
Code written in \texttt{R} that builds the figures of the paper is available online at \url{https://github.com/PeterLCohen/PrepivotingCode}.  
Furthermore, at the same location, we provide concrete examples -- also written in \texttt{R} code -- illustrating how one might choose to implement Gaussian prepivoting from scratch.  For simplicity, we present prepivoting the absolute $\sqrt{N}$-scaled difference in means for a univariate completely randomized design.  In other words, we exactly demonstrate the implementation of Algorithm~1 for Gaussian prepivoting used in Example~1 of Section~5 in the main paper.  At the same location, we provide \texttt{R} code to reproduce the results of the data analysis in Section~\ref{supp: sec: ALO}.

\bibliographystyle{apalike}
\bibliography{bibliography}

\end{document}